\numberwithin{equation}{section}
\theoremstyle{plain}
\newtheorem{lem}{Lemma}[section]
\newtheorem{thm}[lem]{Theorem}
\newtheorem{cor}{Corollary}[section]
\newtheorem{prop}{Proposition}[section]
\theoremstyle{definition}
\newtheorem{defn}{Definition}[section]
\newtheorem{rem}{Remark}[section]
\DeclareMathOperator{\diag}{diag}
\newcommand{\ddp}[2]{\frac{\partial#1}{\partial#2}}
\newcommand{\p}{\partial}
\newcommand{\ds}{\displaystyle}
\newcommand{\RR}{\mathbb{R}}
\newcommand{\rmr}{\mathrm{r}}
\newcommand{\nm}{\noalign{\smallskip}}
\newcommand{\ZZ}{\mathbb{Z}}
\newcommand{\Scal}{\mathcal{S}}
\newcommand{\Kcal}{\mathcal{K}}
\renewcommand{\(}{\left(}
\renewcommand{\)}{\right)}
\def \e{\ensuremath{\mathrm{e}}}
\def \i{\ensuremath{\mathrm{i}}}
\def \d{\ensuremath{\mathrm{d}}}
\begin{document}

\title{
Mathematical analysis of subwavelength resonant acoustic scattering in multi-layered high-contrast structures 
}

\author{
	{Youjun Deng} \thanks{Corresponding authors. School of Mathematics and Statistics, Central South University, Changsha, 410083, Hunan Province, China. \ \ Email: youjundeng@csu.edu.cn; dengyijun\_001@163.com}
	\and
   {Lingzheng Kong} \thanks{Corresponding authors. School of Mathematics and Statistics, Central South University, Changsha, 410083, Hunan Province, China. Email: math\_klz@csu.edu.cn; math\_klz@163.com}
	\and
	{Yongjian Liu} \thanks{Center for Applied Mathematics of Guangxi, Yulin Normal University, Yulin 537000, P. R. China.  \ \  Email: liuyongjianmaths@126.com}
	\and
	{Liyan Zhu}	\thanks{School of Mathematics and Statistics, Central South University, Changsha, 410083, Hunan Province, China.\ \ Email: math\_zly@csu.edu.cn}
}

\date{}%
\maketitle
\begin{abstract}
Multi-layered structures are widely used in the construction of metamaterial devices to realize various cutting-edge waveguide applications. This paper makes several contributions to the mathematical analysis of subwavelength resonances in a structure of $N$-layer nested resonators. Firstly, based on the Dirichlet-to-Neumann approach, we reduce the solution of the acoustic scattering problem to an $N$-dimensional linear system, and derive the optimal asymptotic characterization of subwavelength resonant frequencies in terms of the eigenvalues of an $N\times N$ tridiagonal matrix, which we refer to as the generalized capacitance matrix. Moreover, we provide a modal decomposition formula for the scattered field, as well as a monopole approximation for the far-field pattern of the acoustic wave scattered by the $N$-layer nested resonators. Finally, some numerical results are presented to corroborate the theoretical findings.

\end{abstract}

{\bf Key words}: Multi-layered structures; Subwavelength resonance; Capacitance matrix;   Monopole approximation;  Modal decomposition

{\bf 2020 Mathematics Subject Classification:}~~ 35R30; 35J05; 35B30

\section{Introduction}
In recent years, the subwavelength resonant systems have been extensively studied in the
realms of physics and mathematics and applied to advanced techniques for manipulating wave propagation at subwavelength scales.  In acoustics, classical examples of subwavelength resonant systems include the Minnaert resonance of bubbles in water \cite{Min_1933,AFGLZ_AIHPCAN} and the subsequent development of stable bubble strategies \cite{AFHLY_JDE2019, LLZSIAM2022}. In electromagnetics, such systems encompass plasmonic particles \cite{ACLJFA23,DLZJMPA21,DLbook2024,Ammari2013,FangdengMMA23,KKLSY_JLMS2016,JKMA23,BZ_RMI2019,Ammari2016,KZDF} and high-refractive-index dielectric particles \cite{CGS_JLMS2023,ALLZ_MMS2024}. In linear elasticity, these systems include negative elastic materials \cite{DLL_SIAM2020, DKLZ24, DLL_JST2019, AJKKY_SIAM2017, LL_SIAM2016} and high-contrast elastic materials \cite{ZH_PRB2009, LZScience, LZArxiv, LXarXiv}.  Additionally, contrasting material structures are the effective realization of  various metamaterials with negative material properties through the homogenization theory \cite{AmmariSIMA17, AFLYZQAM19,LWSZ_NM2011}. In particular, the resonant behavior observed at subwavelength scales is a direct consequence of the high contrast in the physical parameters of the medium.

Building on this realization, a class of phononic crystals, consisting of  periodic arrangements of separated subwavelength high-contrast resonators\cite{AFHLY_SIMA2020, AFLYZ_JDE2017, AK_book2018}, has been demonstrated to exhibit bandgaps and is increasingly employed in various waveguide applications \cite{AD_book2024,AD_SIIS2023,AZ_CMP2015}, as well as in imaging as contrast agents \cite{DGS_IPI2021, CCS_IPI2018, SW_SIMA2022}. In most studies on phononic crystals, the structures are typically composed of single-layer (homogeneous) resonators. However, these configurations are often limited by narrow bandgap widths and poor wave filtering performance, making them less suitable for practical engineering applications \cite{PVDD_SCR2010,KMKG_JVA2017,LPDV_PRE2007}. This limitation has spurred the development and investigation of metamaterials with wider bandgaps. In particular, multi-layered high-contrast metamaterials have gained significant attention as promising candidates for subwavelength resonators, owing to their high-tunability and high-quality resonance. Experimental and numerical studies  \cite{LPDV_PRE2007,CMJW_SV2018,KMKG_JVA2017,SPMW_AA2023} have shown that multi-layered concentric radial resonators, in the subwavelength regime, can generate multiple local resonance bandgaps. However, despite considerable progress in both the engineering and physics literature, the mathematical understanding of the origins of subwavelength resonance in multi-layered contrasting media and the mechanism underlying mode splitting remains limited.

In our recent work \cite{DKLLZ_MLHC},  we conducted a rigorous and systematic mathematical analysis of subwavelength resonances in multi-layered high-contrast structures. Specifically, by using layer potential techniques and Gohberg-Sigal theory, we demonstrated that a structure of $N$-layer nested resonators of general shape  exhibits $N$ subwavelength resonant frequencies with positive real parts. Moreover, based on the classical Mie scattering theory, the subwavelength resonances in the $N$-layer concentric radial resonators can be characterized as a $4N\times 4N$ linear system. Through lengthy and tedious calculations, we drived the exact formulas for the subwavelength resonant frequencies for single-resonator, dual-resonator models. For structures with a large number of nested resonators, we provide  numerical computations of resonant modes. Although the $4N\times 4N$ linear system provides a complete characterization of the subwavelength resonances in the $N$-layer concentric radial resonators, it does not directly facilitate the prediction of scattering resonances.  This is primarily due to the computational complexity involved in deriving the asymptotic expansions of the determinant of $4N\times 4N$ block tridiagonal matrix.

The main contribution of this paper is to enhance the mathematical analysis of subwavelength resonant acoustic scattering in multi-layered high-contrast structures. Using the Dirichlet-to-Neumann (DtN) approach previously introduced in \cite{FA_JMPA2024}, we derive an asymptotic characterization of  subwavelength resonant frequencies in terms of the eigenvalues of an $N\times N$ tridiagonal matrix, which we refer to as the generalized capacitance matrix. This formulation provides a highly efficient computational framework for predicting the scattering properties of the multi-layered high-contrast structures.  The capacitance matrix for multiple simply-connected subwavelength resonators  has been extensively studied in \cite{FA_SAM_2022,AD_SIIS2023,AD_book2024,ADY_MMS2020,AFLYZQAM19}. However, to our knowledge, there is no rigorous mathematical analysis of the capacitance matrix for multi-layered doubly-connected subwavelength resonators in the existing literature. This paper highlights the importance of studying multi-layered doubly-connected resonators, as they exhibit unique mathematical features, such as the tridiagonal positive definite property of the capacitance matrix, which are not present in multiple simply-connected subwavelength resonators. Furthermore, we provide a modal decomposition formula for the scattered field, as well as a monopole approximation for the far-field pattern, which demonstrates that the scattered field is greatly enhanced as the impinging frequency approaches one of the $N$ subwavelength resonant frequencies.

The remainder of this paper is organized as follows. In Section \ref{section2}, we first establish the representation formula of the solution of the acoustic scattering problem in multi-layer nested resonators with general shape, and review the existence of subwavelength resonances, the number of which is equal to the number of nested resonators. In Section \ref{section3}, we consider the acoustic scattering  in multi-layered  concentric radial resonators. Firstly, we recall the first characterization of resonances in terms of $4N\times 4N$ block tridiagonal matrix \eqref{MSW} based on spherical wave expansions. Secondly, we give the second characterization of resonances in terms of $2N\times 2N$ block diagonal matrix \eqref{DtNmap} based on Dirichlet-to-Neumann (DtN) map. Finally, based on the DtN approach \cite{FA_JMPA2024}, we reduce the solution of the acoustic scattering problem to that of an $N$-dimensional linear system  \eqref{eq654}, and derive the subwavelength resonances in terms of the eigenvalues of an $N\times N$ tridiagonal matrix \eqref{GEP}, which we refer to as the generalized capacitance matrix.  In Section \ref{MSmonopolar}, we  provide the  modal decomposition and  monopole approximation of multi-layered  concentric radial resonators. In Section \ref{sec5}, numerical computations are presented to to corroborate our theoretical findings. Some concluding remarks are made in Section \ref{sec6}.

\section{Subwavelength resonance in multi-layered high-contrast metamaterials}\label{section2}
\subsection{Problem setting}
We consider the subwavelength resonant phenomenon for the multi-layered  high-contrast (MLHC) metamaterials. Assume that the MLHC metamaterials consist of two types of materials: the host matrix material and the high-contrast material.
We write
\[
D = \cup_{j=1}^{N}D_{j}
\]
to denote the entire resonator-nested, where $D_j$ is the  bounded doubly-connected domain lying between the interior boundary $\Gamma_j^-$ and the exterior boundary $\Gamma_j^+$, $j=1,2,\ldots,N$. Each $\Gamma_{j}^-$ surrounds $\Gamma_{j+1}^+$ and $\Gamma_j^+$ surrounds $\Gamma_j^-$ $(j=1,2,\ldots,N-1)$.
Denote $D_j'$ the region of the gap between the $\Gamma_j^-$ and $\Gamma_{j+1}^+$, $j = 1,2,\ldots,N-1$. Let $D_0'$ and $D_N'$ be the unbounded domain with boundary $\Gamma_{1}^+$ and the bounded domain with boundary $\Gamma_{N}^-$, respectively.  Then, the host matrix material can be written by
\[
\RR^3\setminus{D} = \cup_{j=0}^{N} D'_j.
\]
The configuration of the considered metamaterial is characterized by the density $\rho(x)$ and the bulk modulus $\kappa(x)$ which are given by
 \begin{equation}\label{nestedcomplement}
 \rho(x)
 =\left\{
 \begin{array}{ll}
 \rho_\rmr, & x\in D_j, \; j=1,2,\ldots,N,\\
 \rho, & x\in D_j', \; j=0,1,\ldots,N,
 \end{array}
 \right.\;\; \mbox{ and }\;\;\kappa(x)
 =\left\{
 \begin{array}{ll}
 \kappa_\rmr, &x\in D_j, \; j=1,2,\ldots,N,\\
 \kappa, & x\in D_j', \; j=0,1,\ldots,N.
 \end{array}
 \right.
 \end{equation}
The wave speeds and wavenumbers of resonators and  host matrix materials,respectively, are given by
\begin{equation}\label{auxiliaryparameters}
v_\rmr=\sqrt{\frac{\kappa_\rmr}{\rho_\rmr}}, \quad v=\sqrt{\frac{\kappa}{\rho}}, \quad k_\rmr =\frac{\omega}{v_\rmr}, \quad k=\frac{\omega}{v}.
\end{equation}
We also introduce two dimensionless contrast parameters:
 \begin{equation}\label{contrastparameter}
 \delta=\frac{\rho_\rmr}{\rho}, \,\, \tau= \frac{k_\rmr }{k}= \frac{v}{v_\rmr} =\sqrt{\frac{\rho_\rmr \kappa}{\rho \kappa_\rmr}}.
 \end{equation}
 We will assume that  $
 v = O(1)$, $v_\rmr = O(1)$,  and $\tau = O(1)$; meanwhile
 $\delta \ll 1.$
 This high-contrast assumption is the cause of the underlying system’s subwavelength resonant response and will be at the center of our subsequent analysis.

 We will consider the scattering of a time-harmonic acoustic wave by this MLHC structure. This is described by the Helmholtz system of equations
 \begin{equation} \label{main_equation}
 \begin{cases}
 \ds\Delta u  + \frac{\omega^2}{v^2} u = 0 & \text{in  } \RR^3\setminus D, \\
 \ds\Delta u  + \frac{\omega^2}{v_\rmr^2} u = 0 & \text{in  } D, \\
 \nm
 \ds u|_+ - u|_- = 0 & \text{on } \Gamma^\pm_j, \; j =1,2,\ldots,N,\\
 \nm
 \ds \delta \ddp{u}{\nu}|_+ =  \ddp{u}{\nu}|_-  & \text{on }\Gamma^+_j,\; j =1,2,\ldots,N, \\
 \nm
 \ds  \ddp{u}{\nu}|_+  =\delta \ddp{u}{\nu}|_-  & \text{on }\Gamma^-_j,\; j =1,2,\ldots,N, \\
 \nm
 \ds u^s := u - u^{in} &  \mbox{satisfies the Sommerfeld radiation condition,}
 \end{cases}
 \end{equation}
 where $u^{in}$ is the incoming wave, and  the notation $\nu$ denotes the outward normal on $\Gamma_j^\pm$.
 By the Sommerfeld radiation condition, the scattered wave $u^s$ satisfies
 \begin{equation} \label{eq:src}
 \left(\ddp{}{|x|}-\i  k_0\right)u^s=O(|x|^{-2})\quad \mbox{ as }|x|\to +\infty.
 \end{equation}

 \subsection{Layer potentials}\label{subsec22}
 In this subsection, we briefly introduce the boundary layer potential operators and  then establish the representation formula of the solution \eqref{main_equation}, and we also refer to \cite{AK_book2018,CK_book} for more relevant discussions on layer potential techniques.

 Let $G_k$ be the outgoing fundamental solution to the PDO $ \Delta+k^2$ in $\mathbb{R}^3$, which is given by
 \begin{equation}\label{fundamentalk}
 G_k(x)= - \frac{e^{\i k|x|}}{4 \pi|x|}.
 \end{equation}
 Let $\Omega$ be a bounded domain with a $C^{1,\eta}\, (0<\eta<1)$ boundary $\Gamma$. The single layer potential $\mathcal{S}_{\Gamma}^{k}$ associated with wavenumber $k$ can defined by
 \begin{equation}\label{eq:sh}
 \mathcal{S}_{{\Gamma}}^{k} [\phi](x) =  \int_{{\Gamma}} G_k(x- y) \phi(y) ~\mathrm{d}\sigma(y),  \quad x \in  \mathbb{R}^3,
 \end{equation}
 where $\phi\in L^2(\Gamma)$ is the density function. There hold the following jump relations on the surface \cite{Ammari2007}
 \begin{equation} \label{singlejumpk}
 \frac{\p}{\p\nu}\Scal^k_{\Gamma} [\phi] \Big|_{\pm} = \(\pm \frac{1}{2}I+
 \Kcal_{{\Gamma}}^{k,*}\)[\phi] \quad \mbox{on } {\Gamma},
 \end{equation}
 where the subscripts $+$ and $-$ denote evaluation from outside and inside the boundary $\Gamma$, respectively, and  $\Kcal_{\Gamma}^{k,*}$ is the Neumann-Poincar\'e (NP) operator defined by
 \[
 \Kcal_{\Gamma}^{k,*}[\phi]
 (x) = \mbox{p.v.}\;\int_{{\Gamma}}\frac{\p G_k(x-y)}{\p \nu_x}\phi(y)~\mathrm{d} \sigma(y),\quad x \in  \Gamma.
 \]
 Here p.v. stands for the Cauchy principle value. In what follows, we denote by $\Scal_{\Gamma}$ and $\Kcal_{\Gamma}^{*}$ be the single-layer and Neumann-Poincar\'e operators $\Scal_{\Gamma}^k$ and $\Kcal_{\Gamma}^{k,*}$, by formally taking $k =0$ respectively.

 \subsection{Subwavelength resonance}

 With the help of the layer potentials in subsection \ref{subsec22}, the solution to the Helmholtz system \eqref{main_equation} can be represented by (cf. \cite{DKLLZ_MLHC})
 \begin{equation}\label{Helm_solution}
 u(x) = \begin{cases}
 \ds u^{in} + \mathcal{S}_{\Gamma_1^+}^{k} [\psi_1^+](x), & \quad x \in D_0',\\
 \nm
 \ds\mathcal{S}_{{\Gamma_j^+}}^{k_\rmr} [\phi_j^+](x)+\mathcal{S}_{{\Gamma_{j}^-}}^{k_\rmr} [\psi_{j}^-](x),  & \quad x \in {D_j},\; j = 1,2,\ldots,N,\\
 \nm
 \ds\mathcal{S}_{{\Gamma_j^-}}^{k} [\phi_j^-](x)+\mathcal{S}_{{\Gamma_{j+1}^+}}^{k} [\psi_{j+1}^+](x),  & \quad x \in {D_j'},\; j = 1,2,\ldots,N-1,\\
 \nm
 \ds \mathcal{S}_{{\Gamma_N^-}}^{k} [\phi_N^-](x) ,  & \quad x \in {D'_N},
 \end{cases}
 \end{equation}
 where $\psi_j^\pm,\phi_j^\pm \in  L^2(\Gamma_j^\pm)$, $j =1,2,\ldots,N.$ Using the transmission conditions in \eqref{main_equation}, and the jump relations for the single layer potentials, we can obtain that $\psi_j^\pm$ and $\phi_j^\pm$, $j =1,2,\ldots,N,$  satisfy the following system of boundary integral equations:
\begin{equation}\label{integralsystem}
\mathcal{A}(\omega,\delta)[\Psi]=(u^{in}, \delta \frac{\partial u^{in}}{\partial \nu},0,0,\ldots,0)^T.
\end{equation}
Here
\[
\Psi= (\psi_1^+,\phi_1^+,\psi_1^-,\phi_1^-,\psi_2^+,\phi_2^+,\psi_2^-,\phi_2^-,\ldots,\psi_N^+,\phi_N^+,\psi_N^-,\phi_N^-)^T,
\]
and the $4N$-by-$4N$ matrix type operator $\mathcal{A}(\omega,\delta)$ has the block tridiagonal form
\begin{equation}\label{layeredintegralsystem}
\begin{split}
\mathcal{A}(\omega,\delta):
&=\begin{pmatrix}
 \mathcal{M}_{\Gamma_1^+} & \mathcal{R}_{\Gamma_1^+,\Gamma_1^-} & &  & & \\
 \mathcal{L}_{\Gamma_1^-,\Gamma_1^+} & \mathcal{M}_{\Gamma_1^-} &\mathcal{R}_{\Gamma_1^-,\Gamma_2^+}&  & &\\
 &\mathcal{L}_{\Gamma_2^+,\Gamma_1^-} & \mathcal{M}_{\Gamma_2^+} & \mathcal{R}_{\Gamma_2^+,\Gamma_2^-} & &\\
&  &\mathcal{L}_{\Gamma_2^-,\Gamma_2^+} & \mathcal{M}_{\Gamma_2^-} & \mathcal{R}_{\Gamma_2^-,\Gamma_3^+} & \\
& & & \ddots &\ddots & \ddots& \\
& &  & & \mathcal{L}_{\Gamma_N^+,\Gamma_{N-1}^-} & \mathcal{M}_{\Gamma_N^+} & \mathcal{R}_{\Gamma_N^+,\Gamma_N^-}\\
& &  & &  &\mathcal{L}_{\Gamma_N^-,\Gamma_N^+} & \mathcal{M}_{\Gamma_N^-}
 \end{pmatrix}
 \end{split}
 \end{equation}
  where $\mathcal{M}_{\Gamma_i^\pm}$ is the self-interaction of the exterior  and the interior boundaries for the $i$-th resonator, respectively,  defined by
  \begin{equation}\label{contrastselfinteract}
  \quad \mathcal{M}_{\Gamma_i^+}=
  \begin{pmatrix}
  -\mathcal{S}^{k}_{\Gamma_i^+} & \mathcal{S}^{k_\rmr }_{\Gamma_i^+} \\
  -\delta( \frac{1}{2}I+ \mathcal{K}_{\Gamma_i^+}^{k, *}) & -\frac{1}{2}I+ \mathcal{K}_{\Gamma_i^+}^{k_\rmr , *}
  \end{pmatrix}, \quad
  \mathcal{M}_{\Gamma_i^-}=\begin{pmatrix}
  -\mathcal{S}^{k_\rmr }_{\Gamma_{i}^-} & \mathcal{S}^{k}_{\Gamma_{i}^-} \\
  -( \frac{1}{2}I+ \mathcal{K}_{\Gamma_{i}^-}^{k_\rmr , *}) & \delta(-\frac{1}{2}I+ \mathcal{K}_{\Gamma_{i}^-}^{k, *} )
  \end{pmatrix},
  \end{equation}
$\mathcal{L}_{\Gamma_i^-,\Gamma_i^+}$  and $\mathcal{R}_{\Gamma_i^-,\Gamma_{i+1}^+}$ encode the effect of the exterior boundaries of the $i$-th resonator and the $(i+1)$-th resonator, respectively, on the interior boundary of the $i$-th resonator, as defined by
\begin{equation}\label{contrastinteractEIi}
\mathcal{L}_{\Gamma_i^-,\Gamma_i^+} = \begin{pmatrix}
\ds 0&-\mathcal{S}^{k_\rmr }_{\Gamma_{i}^-,\Gamma_{i}^+}\\
\nm
\ds 0&-\mathcal{K}_{\Gamma_{i}^-,\Gamma_{i}^+}^{k_\rmr , *}
\end{pmatrix}, \quad
\mathcal{R}_{\Gamma_i^-,\Gamma_{i+1}^+}=
\begin{pmatrix}
\ds\mathcal{S}^{k}_{\Gamma_{i}^-,\Gamma_{i+1}^+}& 0\\
\nm
\ds \delta\mathcal{K}_{\Gamma_{i}^-,\Gamma_{i+1}^+}^{k, *}& 0
\end{pmatrix},
\end{equation}
$\mathcal{L}_{\Gamma_{i}^+,\Gamma_{i-1}^-}$  and $\mathcal{R}_{\Gamma_{i}^+,\Gamma_{i}^-}$ encode the effect of the interior boundaries of the $(i-1)$-th resonator and the $i$-th resonator, respectively, on the exterior boundary of the $i$-th resonator, as defined by
\begin{equation}\label{contrastinteractL}
\mathcal{L}_{\Gamma_{i}^+,\Gamma_{i-1}^-} = \begin{pmatrix}
0&-\mathcal{S}^{k}_{\Gamma_{i}^+,\Gamma_{i-1}^-}\\
\nm
0&-\delta \mathcal{K}_{\Gamma_{i}^+,\Gamma_{i-1}^-}^{k, *}
\end{pmatrix}, \quad
\mathcal{R}_{{\Gamma_{i}^+,\Gamma_{i}^-}}=
\begin{pmatrix}
\ds \mathcal{S}^{k_\rmr }_{\Gamma_{i}^+,\Gamma_{i}^-}& 0\\
\nm
\ds \mathcal{K}_{\Gamma_{i}^+,\Gamma_{i}^-}^{k_\rmr , *}& 0
\end{pmatrix}
.
\end{equation}
Here, we introduced the operators $\mathcal{S}^{k}_{\Gamma_{i},\Gamma_{j}}:L^2(\Gamma_{j})\to L^2(\Gamma_i)$ and $\mathcal{K}_{\Gamma_{i},\Gamma_{j}}^{k, *}:L^2(\Gamma_{j})\to L^2(\Gamma_i)$ as defined respectively by
\[
\mathcal{S}^{k}_{\Gamma_{i},\Gamma_{j}}[\varphi] = \mathcal{S}^{k}_{\Gamma_{j}}[\varphi] \big|_{\Gamma_i}\;\mbox{ and }\; \mathcal{K}_{\Gamma_{i},\Gamma_{j}}^{k, *}[\varphi] = \ddp{}{\nu_i}\mathcal{S}^{k}_{\Gamma_{j}}[\varphi] \big|_{\Gamma_i}, \mbox{ for } \forall \varphi\in L^2(\Gamma_{j}).
\]

In order to understand the resonant behavior of the $N$-layer nested scatterers, we shall give the definition of the subwavelength resonant frequencies and resonant modes of the system  based on the high contrast $\delta$  between the materials.

\begin{defn}\label{defn:resonance}
	Given $\delta>0$, a subwavelength resonant frequency (eigenfrequency) $\omega=\omega(\delta)\in\mathbb{C}$ is defined to be such that

	$\,$(i) in the case that $u^{in} = 0$, there exists a nontrivial solution to \eqref{main_equation}, known as an associated resonant mode (eigenmode);
	
	(ii) $\omega$ depends continuously on $\delta$ and satisfies $\omega\to0$ as $\delta\to0$.
\end{defn}

When the material parameters are real, it is easy to see that $\overline{\mathcal{A}(\omega,\delta)}=\mathcal{A}(-\overline{\omega},\delta)$, from which we can see that the subwavelength resonant frequencies will be symmetric about the imaginary axis.

\begin{lem}[see \cite{dyatlov2019mathematical}] \label{symmetric_res}
	The set of subwavelength resonant frequencies is symmetric about the imaginary axis. That is, if $\omega$ is such that $\mathcal{A}(\omega,\delta)[\Psi] = 0$ holds true for some nontrivical $\Psi\in \mathcal{H}$, then it will also hold that
	\begin{equation*}
	\mathcal{A}(-\overline{\omega},\delta)
	\left[\overline{\Psi}\right]
	= 0.
	\end{equation*}
\end{lem}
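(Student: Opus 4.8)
The plan is to reduce the claim to the operator identity $\overline{\mathcal{A}(\omega,\delta)}=\mathcal{A}(-\overline{\omega},\delta)$ already anticipated in the paragraph preceding the lemma, where for an operator $T$ on the density space $\mathcal{H}$ we write $\overline{T}$ for the operator $\phi\mapsto\overline{T[\overline{\phi}]}$. Granting this identity, one simply conjugates the resonance relation: from $\mathcal{A}(\omega,\delta)[\Psi]=0$ we get $0=\overline{\mathcal{A}(\omega,\delta)[\Psi]}=\overline{\mathcal{A}(\omega,\delta)}\,[\overline{\Psi}]=\mathcal{A}(-\overline{\omega},\delta)[\overline{\Psi}]$, and since $\Psi\neq 0$ forces $\overline{\Psi}\neq 0$, the tuple $\overline{\Psi}$ is a nontrivial solution at the frequency $-\overline{\omega}$. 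Because $\omega\mapsto-\overline{\omega}$ is continuous and fixes the origin, $-\overline{\omega}$ also satisfies item (ii) of Definition \ref{defn:resonance} whenever $\omega$ does, so it is again a genuine subwavelength resonant frequency.

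To establish the operator identity I would go block by block through the form \eqref{layeredintegralsystem}, tracking the effect of complex conjugation on each constituent. The elementary fact driving everything is that the outgoing fundamental solution satisfies $\overline{G_k(x)}=-e^{-\i\overline{k}|x|}/(4\pi|x|)=G_{-\overline{k}}(x)$; since the speeds $v,v_\rmr$ are real, the wavenumbers associated with the frequency $-\overline{\omega}$ are exactly $-\overline{k}$ and $-\overline{k_\rmr}$. Hence, for any density $\phi$, $\overline{\mathcal{S}_\Gamma^{k}[\phi]}=\mathcal{S}_\Gamma^{-\overline{k}}[\overline{\phi}]$ and, the normals $\nu_x$ being real, $\overline{\mathcal{K}_\Gamma^{k,*}[\phi]}=\mathcal{K}_\Gamma^{-\overline{k},*}[\overline{\phi}]$, with the analogous statements for the transmission operators $\mathcal{S}^{k}_{\Gamma_i,\Gamma_j}$ and $\mathcal{K}^{k,*}_{\Gamma_i,\Gamma_j}$. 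As the contrast $\delta$ and the scalars $\pm\tfrac12$ are real, every entry of the diagonal blocks $\mathcal{M}_{\Gamma_i^\pm}$ and of the off-diagonal blocks $\mathcal{L}$, $\mathcal{R}$ in \eqref{contrastselfinteract}--\eqref{contrastinteractL} is carried by conjugation into the corresponding entry of the same block evaluated at $-\overline{\omega}$. Reassembling the blocks yields $\overline{\mathcal{A}(\omega,\delta)}=\mathcal{A}(-\overline{\omega},\delta)$ on $\mathcal{H}$.

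The one point deserving a moment's care is that conjugation swaps the outgoing and incoming fundamental solutions, so a priori $\overline{u^s}$ (equivalently, the field reconstructed from $\overline{\Psi}$ via \eqref{Helm_solution}) obeys an \emph{incoming} Sommerfeld condition; but this sign reversal is precisely compensated by the substitution $\omega\mapsto-\overline{\omega}$, i.e.\ $k\mapsto-\overline{k}$, so that $\overline{u^s}$ is once again outgoing at the new frequency, consistently with the representation used to define $\mathcal{A}$. Beyond this sign check the argument is pure bookkeeping, so I do not expect any genuine obstacle; the statement is a standard structural feature of scattering resonances (cf.\ \cite{dyatlov2019mathematical}).
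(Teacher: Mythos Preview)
Your proposal is correct and follows exactly the approach the paper indicates: the paper does not prove this lemma at all but simply records, in the sentence immediately preceding it, that ``it is easy to see that $\overline{\mathcal{A}(\omega,\delta)}=\mathcal{A}(-\overline{\omega},\delta)$, from which we can see that the subwavelength resonant frequencies will be symmetric about the imaginary axis,'' and then cites \cite{dyatlov2019mathematical}. Your block-by-block verification via $\overline{G_k}=G_{-\overline{k}}$ is precisely the computation that justifies that one-line remark, so you have supplied the omitted details rather than taken a different route.
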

Based on Lemma \ref{symmetric_res}, we will subsequently state results only for the subwavelength resonant frequencies with positive real parts.
The existence of subwavelength resonant frequencies is given by the following Theorem, which was proved in \cite{DKLLZ_MLHC} by using the generalized Rouch\'e theorem \cite{AK_book2018}.

\begin{thm}[\cite{DKLLZ_MLHC}]\label{thmres}
	Consider a structure of $N$-layer  subwavelength nested resonators in $\mathbb{R}^3$.
	For sufficiently small $\delta>0$, there exist $N$ subwavelength resonant frequencies $\omega^+_{1}(\delta),\omega^+_{2}(\delta),\dots,\omega^+_{N}(\delta)$ with positive real parts.
\end{thm}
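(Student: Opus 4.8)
\medskip
\noindent\emph{Proof strategy.} The plan is to view the boundary integral operator $\mathcal{A}(\omega,\delta)$ of \eqref{layeredintegralsystem} as a holomorphic, Fredholm operator pencil of index zero in $\omega$ on the natural trace spaces $\mathcal{H}$ attached to the interfaces $\Gamma_1^{\pm},\dots,\Gamma_N^{\pm}$, and to count its characteristic values in a disc $B_r(0)$ of fixed small radius: by the representation formula \eqref{Helm_solution} and Definition \ref{defn:resonance}, $\omega$ is a subwavelength resonant frequency of \eqref{main_equation} exactly when it is such a characteristic value. The counting will be done with the generalized Rouch\'e theorem (Gohberg-Sigal theory, cf. \cite{AK_book2018}): since $\mathcal{A}(\omega,\delta)$ depends continuously on $\delta$ and, as shown below, $\mathcal{A}(\omega,0)$ is invertible on $\partial B_r$, the total multiplicity of the characteristic values in $B_r(0)$ is, for all sufficiently small $\delta\ge 0$, the same as that of $\mathcal{A}(\cdot,0)$; moreover $\omega=0$ is not a resonant frequency for any $\delta>0$, since the static problem \eqref{main_equation} at $\omega=0$ admits only $u\equiv 0$ (multiply by $\overline{u}$, integrate over $\mathbb{R}^3$, and use the transmission conditions together with the decay $u=O(|x|^{-1})$), so that $\mathcal{A}(0,\delta)$ is invertible.

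The core is the analysis of the limiting operator $\mathcal{A}_0:=\mathcal{A}(0,0)$, obtained by setting $k=k_\rmr=0$ and $\delta=0$ in \eqref{layeredintegralsystem}. First one checks that $\mathcal{A}_0$ is Fredholm of index zero: the diagonal blocks reduce to invertible single-layer operators together with $\pm\tfrac12 I+\mathcal{K}^{*}$-type terms, while the off-diagonal coupling blocks have smooth kernels (the interfaces are pairwise disjoint) and hence are compact. Then one identifies $\ker\mathcal{A}_0$ with the space of static modes of the $\delta=0$ transmission problem: putting $\delta=0$ turns the flux conditions in \eqref{main_equation} into homogeneous Neumann conditions on each $\Gamma_j^{\pm}$ seen from the resonator side, so that any element of $\ker\mathcal{A}_0$ corresponds to a function that is harmonic and constant in every resonator $D_j$, harmonically interpolated in every gap $D_j'$ from its traces on $\Gamma_j^-$ and $\Gamma_{j+1}^+$, equal to the constant of $D_N$ throughout the innermost cavity $D_N'$, and equal in $D_0'$ to the decaying harmonic extension of its trace on $\Gamma_1^+$. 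Such a mode is determined precisely by the $N$ constants $(c_1,\dots,c_N)$ carried in $D_1,\dots,D_N$, so $\dim\ker\mathcal{A}_0=N$ and, by index zero, $\dim\operatorname{coker}\mathcal{A}_0=N$.

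Next I would carry out a Lyapunov-Schmidt reduction of $\mathcal{A}(\omega,\delta)[\Psi]=0$ onto $\ker\mathcal{A}_0$, producing an $N\times N$ matrix $\mathcal{M}(\omega,\delta)$, holomorphic in $\omega$, whose zeros coincide, with multiplicities, with the characteristic values of $\mathcal{A}(\cdot,\delta)$ in $B_r(0)$. Inserting the low-frequency expansions $\mathcal{S}^k_{\Gamma}=\mathcal{S}_{\Gamma}-\tfrac{\mathrm{i}k}{4\pi}\int_{\Gamma}(\cdot)\,\mathrm{d}\sigma+O(k^2)$ and $\mathcal{K}^{k,*}_{\Gamma}=\mathcal{K}^{*}_{\Gamma}+O(k^2)$ (with $k=\omega/v$, $k_\rmr=\omega/v_\rmr$) and pairing against explicit dual bases of $\ker\mathcal{A}_0$ and $\operatorname{coker}\mathcal{A}_0$, one verifies that the $\omega$-linear contribution does not enter the leading balance and that
\[
\mathcal{M}(\omega,\delta)=\omega^{2}V-\delta\,\mathcal{C}+O\!\left(|\omega|^{3}+\delta|\omega|+\delta^{2}\right),
\]
where $V=\diag(|D_1|,\dots,|D_N|)$ is positive and $\mathcal{C}$ is the tridiagonal generalized capacitance matrix introduced in Section \ref{section3}, which is symmetric and positive definite. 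Consequently $\det\mathcal{M}(\omega,0)=\omega^{2N}\det V+O(|\omega|^{2N+1})$ vanishes to order exactly $2N$ at $\omega=0$, so by the Gohberg-Sigal comparison $\mathcal{A}(\cdot,\delta)$ has exactly $2N$ characteristic values, counted with multiplicity, in $B_r(0)$ for all small $\delta>0$; they are the roots of $\det(\omega^{2}V-\delta\,\mathcal{C}+\cdots)=0$. Since $V^{-1}\mathcal{C}$ is similar to the symmetric positive definite matrix $V^{-1/2}\mathcal{C}\,V^{-1/2}$, it has $N$ positive eigenvalues $\lambda_1,\dots,\lambda_N$, whence the characteristic values are $\omega=\pm\sqrt{\delta\lambda_j}\,(1+o(1))$. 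Exactly $N$ of them, the branches $\omega_j^{+}(\delta)=\sqrt{\delta\lambda_j}\,(1+o(1))$, have positive real part for small $\delta$, and Lemma \ref{symmetric_res} accounts for the other $N$ as their reflections $-\overline{\omega_j^{+}(\delta)}$. This yields the asserted $N$ subwavelength resonant frequencies with positive real parts.

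The step I expect to be the main obstacle is the Fredholm and kernel analysis of $\mathcal{A}_0$ in the nested, doubly-connected geometry: keeping track of the normal orientations on the interfaces $\Gamma_j^{\pm}$, checking that the coupling blocks are genuinely smoothing so that the index is zero, and producing the dual bases of $\ker\mathcal{A}_0$ and $\operatorname{coker}\mathcal{A}_0$ against which $\mathcal{M}(\omega,\delta)$ is computed. It is exactly this bookkeeping that yields the clean $\omega^{2}V-\delta\,\mathcal{C}$ structure and, in particular, the tridiagonality and positive definiteness of $\mathcal{C}$ that underpin the $\sqrt{\delta}$-scaling of the resonances.
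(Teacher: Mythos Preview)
The paper does not actually prove this theorem here; it is quoted from the authors' earlier work \cite{DKLLZ_MLHC}, with the only indication that the proof there proceeds ``by using the generalized Rouch\'e theorem''. Your proposal follows precisely that Gohberg--Sigal strategy and is correct in outline: Fredholmness of $\mathcal{A}_0$, identification of an $N$-dimensional kernel via the $\delta=0$ Neumann decoupling on each $D_j$, and the Rouch\'e count of $2N$ characteristic values paired by Lemma~\ref{symmetric_res}. You in fact go further than Theorem~\ref{thmres} requires, carrying the Lyapunov--Schmidt reduction to the explicit balance $\omega^{2}V-\delta\,\mathcal{C}$ and reading off the $\sqrt{\delta\lambda_j}$ asymptotics; in the present paper that refinement is obtained instead by the DtN reduction of Section~\ref{section3} culminating in Theorem~\ref{thmeigenfreqc}, but the capacitance-matrix structure you anticipate is exactly the one that appears there. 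One small bookkeeping correction: the leading reduced matrix should be $\tfrac{\omega^{2}}{v_\rmr^{2}}V-\delta\,\mathcal{C}$ rather than $\omega^{2}V-\delta\,\mathcal{C}$, so that the branches read $\omega_j^{+}(\delta)=v_\rmr\sqrt{\delta\lambda_j}\,(1+o(1))$, in agreement with \eqref{equeigenfreqc}.
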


\section{Subwavelength resonance in multi-layered concentric balls}\label{section3}

It is known that the subwavelength resonant frequency is associated with the shape of the resonators. However, it has been observed that breaking the rotational symmetry of the resonators does not induce mode splitting. In other words, altering the shape of the resonator does not affect the number of subwavelength resonant frequencies. Based on this observation and from a construction perspective, we shall consider  the Helmholtz system \eqref{main_equation} where $D$ is a multi-layered concentric ball, as illustrated in Figure \ref{MLHCCB}. Precisely, we give a sequence of resonators, $D_j,$ $j=1,2,\ldots,N$, by
\begin{equation}
D_j = \{r_{j}^-<r\leqslant r_{j}^+\},
\end{equation}
 the host matrix material $D_j'$, $j=0,1,\ldots,N,$ by
\begin{equation}\label{eq:aj}
D_{0}':=\{r>r_{1}^+\}, \quad D_j':=\{r_{j+1}^+<r\leqslant r_{j}^-\}, \quad  j=1,2,\ldots, N-1, \quad D_N':=\{r\leqslant r_N^-\},
\end{equation}
and the interfaces between the adjacent layers can be rewritten by
\begin{equation}\label{interface}
\Gamma_j^\pm:=\left\{|x|=r_j^\pm\right\}, \quad  j=1,2,\ldots,N,
\end{equation}
where $N\in \mathbb{N}$ and $r_j^\pm>0$.

\begin{figure}[htbp]
	\centering
	\includegraphics[scale=0.33]{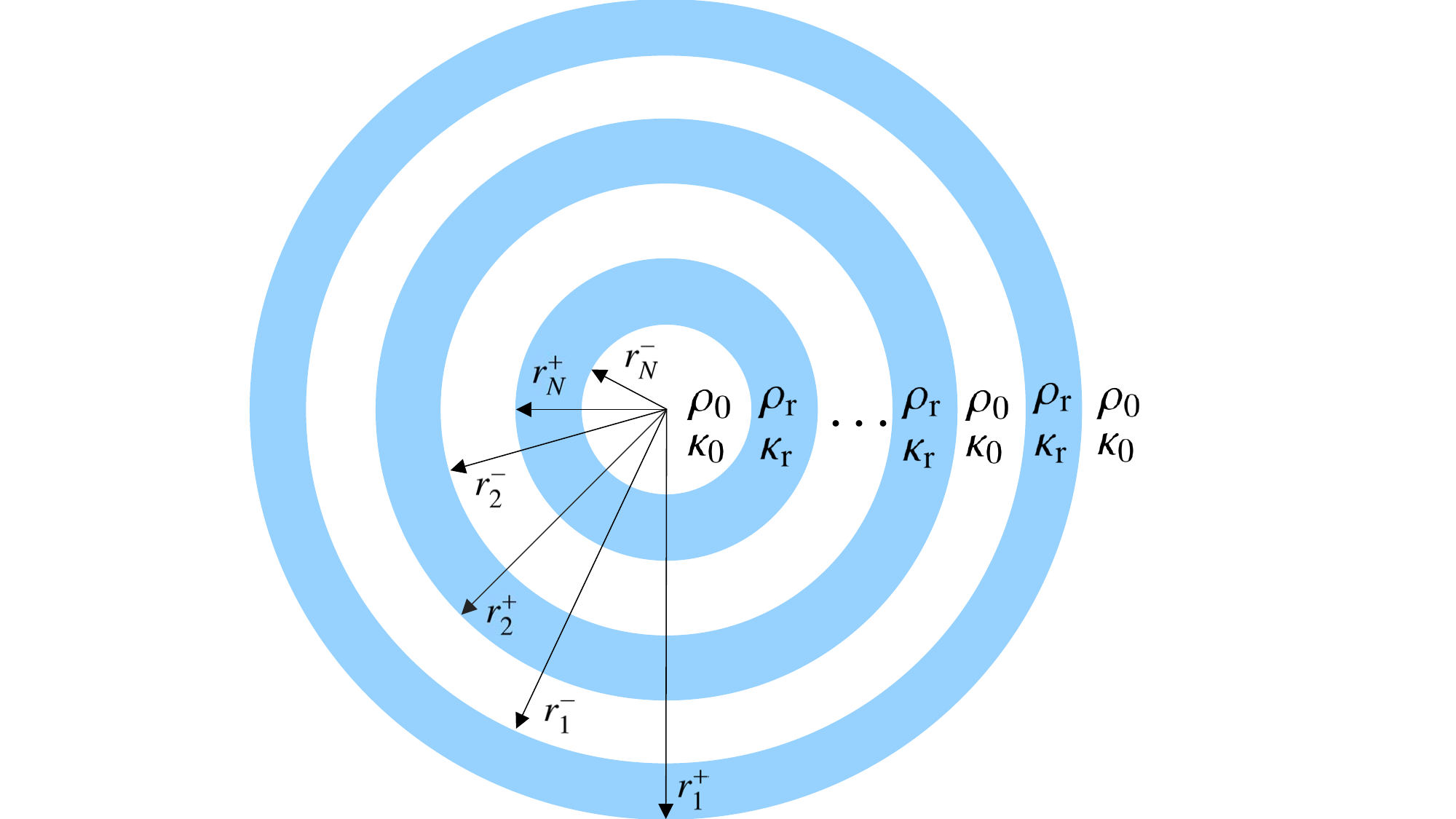}
	\caption{ Schematic illustration of a structure of $N$-layer nested  resonators.}\label{MLHCCB}
\end{figure}

\subsection{A first characterization of resonances based on spherical wave expansions}\label{SWE}
In this subsection, we recall the first characterization of resonances based on spherical wave expansions. By rather lengthy and tedious calculations, the exact formulas for the eigenfrequencies of single-resonator and dual-resonator, as well as numerical computations for any finite multi-layer nested resonators, have been derived and presented  in \cite{DKLLZ_MLHC}.

Let $j_n(t)$ and $h^{(1)}_n(t)$ be the spherical Bessel and Hankel functions of the
first kind of order $n$, respectively, and $Y_n(\hat{x})$ be the spherical harmonics.
By using spherical coordinates,  the solution $u$ to \eqref{main_equation}, with the material parameters given in \eqref{nestedcomplement}--\eqref{contrastparameter}, can be written as
\begin{equation}\label{Helm_solution2}
u(x) = \begin{cases}
\ds u^{in}+\sum_{n=0}^{+\infty}a^+_{1,n}h^{(1)}_n(kr)Y_n, & \quad x \in D_0',\\
\nm
\ds \sum_{n=0}^{+\infty}\(b^+_{j,n}j_n(k_\rmr  r)Y_n + a^-_{j,n}h^{(1)}_n(k_\rmr  r)Y_n\),  & \quad x \in {D_j},\; j=1,2,\ldots,N,\\
\nm
\ds \sum_{n=0}^{+\infty}\(b^-_{j,n}j_n(k r)Y_n + a^+_{j+1,n}h^{(1)}_n(k r)Y_n\),  & \quad x \in {D_j'},\; j=1,2,\ldots,N,
\end{cases}
\end{equation}
where $a^+_{N+1,n}=0$. By using the transmission conditions across $\Gamma^\pm_j$, $j=1,2,\ldots,N$,  we find that the constants satisfy
\[
\bm{A}_{(n)}(\omega,\delta)
[\bm{a}^\pm] = (u^{in}|_{\Gamma_1^+},\delta \frac{\partial u^{in}}{\partial \nu}|_{\Gamma_1^+}, 0,0, \cdots,0)^T,
\]
for all $n\in \mathbb{N}\cup\{0\}$. Here
\[
\bm{a}^\pm = \(a^+_{1,n},b^+_{1,n}, a^-_{1,n},b^-_{1,n}, \cdots,a^+_{N,n},b^+_{N,n},a^-_{N,n},b^-_{N,n}\)^T
\]
and the $4N$-by-$4N$ matrix type operator $\bm{A}_{(n)}(\omega,\delta)$ has the block tridiagonal form
\begin{equation}\label{4Nby4N}
\bm{A}_{(n)}(\omega,\delta):=\begin{pmatrix}
{M}^+_{1,n} & {R}^{+,-}_{1,1,n} & &  & && \\
\nm
{L}^{-,+}_{1,1,n} & {M}^-_{1,n} &{R}^{-,+}_{1,2,n}& &&&\\
\nm
&{L}^{+,-}_{2,1,n} & {M}^+_{2,n} & {R}^{+,-}_{2,2,n} & &&\\
\nm
&&{L}^{-,+}_{2,2,n} & {M}^-_{2,n} & {R}^{-,+}_{2,3,n} & &&\\
&& & \ddots &\ddots & \ddots& \\
&&  & & {L}^{+,-}_{N,N-1,n} & {M}^+_{N,n} & {R}^{+,-}_{N,N,n}\\
\nm
&&  & &  &{L}^{-,+}_{N,N,n} & {M}^-_{N,n}
\end{pmatrix},
\end{equation}
where
\begin{equation}\label{contrastselfinteract2}
 {M}_{i,n}^+ = \begin{pmatrix}
 -h^{(1)}_n(kr_i^+) & j_n(k_\rmr  r_i^+) \\
 \nm
 -\delta h^{(1)'}_n(kr_i^+) & \tau j'_n(k_\rmr  r_i^+)
 \end{pmatrix},
 \quad
 {M}_{i,n}^- = \begin{pmatrix}
 -h^{(1)}_n(k_\rmr  r_i^-) & j_n(k r_i^-) \\
 \nm
 -\tau  h^{(1)'}_n(k_\rmr  r_i^-) & \delta j'_n(k r_i^-)
 \end{pmatrix},
\end{equation}
\[
{L}^{-,+}_{i,i,n} =  \begin{pmatrix}
0&-j_n(k_\rmr r_i^-)\\
\nm
0&-\tau j'_n(k_\rmr r_i^-)
\end{pmatrix},
\quad
{R}^{+,-}_{i,i,n} = \begin{pmatrix}
h^{(1)}_n(k_\rmr  r_i^+)& 0\\
\nm
\tau h^{(1)'}_n(k_\rmr  r_i^+)& 0
\end{pmatrix},
\]
and
\[
{L}^{+,-}_{i,i-1,n} = \begin{pmatrix}
0&-j_n(kr_i^+)\\
\nm
0&-\delta j'_n(kr_i^+)
\end{pmatrix},
\quad
{R}^{-,+}_{i,i+1,n} = \begin{pmatrix}
h^{(1)}_n(k r_i^-)& 0\\
\nm
\delta h^{(1)'}_n(k r_i^-)& 0
\end{pmatrix}.
\]

It is important to emphasize that, from a physical perspective, we are concerned with the resonance of nested materials, which corresponds to the system's lowest resonant frequency. At this frequency, the system exhibits a factor corresponding to $n = 0$, as the lowest resonance is characterized by the fewest number of oscillations \cite{DKLLZ_MLHC,AFHLY_JDE2019}.
It can be seen in the proofs of \cite[Theorems 4.1-4.2]{DKLLZ_MLHC}  that the primary reason for mode splitting lies in the fact that as the number of nested resonators increases, the degree of the corresponding characteristic polynomial also increases, while the type of resonance (which consists solely of monopolar resonances) remains unchanged.  The mathematical justification for the monopole approximation is given in Section \ref{MSmonopolar}.
Consequently,  the $4N$-by-$4N$ matrix
\begin{equation}\label{MSW}
\bm{A}(\omega,\delta):=\bm{A}_{(0)}(\omega,\delta)
\end{equation}
becomes singular.

\begin{prop}\label{FCR}
Subwavelength resonant frequencies $\omega=\omega(\delta)\in\mathbb{C}$ are determined by $\det(\bm{A}(\omega(\delta),\delta))=0$.
\end{prop}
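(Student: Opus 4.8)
\section*{Proof proposal for Proposition \ref{FCR}}

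The plan is to combine the spherical-wave representation \eqref{Helm_solution2} with Definition \ref{defn:resonance} and the monopole reduction. First I would set $u^{in}=0$ in \eqref{Helm_solution2}. Since the spherical harmonics form an orthogonal basis of $L^2$ on each sphere $\{|x|=r_j^\pm\}$ and the transmission conditions in \eqref{main_equation} are linear and act pointwise on these spheres, the matching conditions decouple degree by degree: a function $u$ of the form \eqref{Helm_solution2} solves \eqref{main_equation} with $u^{in}=0$ if and only if, for every $n\in\mathbb N\cup\{0\}$, the coefficient vector $\bm a^\pm_{(n)}:=(a^+_{1,n},b^+_{1,n},a^-_{1,n},b^-_{1,n},\dots,a^+_{N,n},b^+_{N,n},a^-_{N,n},b^-_{N,n})^T$ lies in $\ker\bm A_{(n)}(\omega,\delta)$. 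Hence $\omega$ carries a nontrivial resonant mode precisely when $\bm A_{(n)}(\omega,\delta)$ is singular, i.e. $\det\bm A_{(n)}(\omega,\delta)=0$, for at least one $n$.

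The second step is the monopole reduction: in the subwavelength regime (frequencies $\omega$ with $\omega\to0$ as $\delta\to0$) only the mode $n=0$ can produce such a degeneracy. I would establish this by inserting the small-argument asymptotics $j_n(t)=\tfrac{t^n}{(2n+1)!!}(1+O(t^2))$ and $h^{(1)}_n(t)=-\i\,(2n-1)!!\,t^{-n-1}(1+O(t^2))$, together with the corresponding expansions of $j_n'$ and $h^{(1)\prime}_n$, into the $2\times2$ blocks \eqref{contrastselfinteract2} and their off-diagonal counterparts. After rescaling the rows and columns of $\bm A_{(n)}(\omega,\delta)$ by the appropriate powers of $r_j^\pm$ and $\omega$, the leading-order matrix becomes independent of $\omega$ and of $\delta$, and its determinant is evaluated by the block-tridiagonal recursion. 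One finds that this normalized leading determinant is a nonzero constant for every $n\ge1$, whereas for $n=0$ the same matrix degenerates at $\omega=\delta=0$ — which is exactly what allows the $N$ genuine resonances to bifurcate from the origin (Theorem \ref{thmres}). Consequently $\det\bm A_{(n)}(\omega,\delta)\neq0$ for $n\ge1$ whenever $|\omega|$ and $\delta$ are small enough; this is the monopole property already exploited in \cite{DKLLZ_MLHC} and revisited in Section \ref{MSmonopolar}.

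Combining the two steps gives the statement. If $\omega=\omega(\delta)$ is a subwavelength resonant frequency, Step 1 forces $\bm A_{(n)}(\omega,\delta)$ to be singular for some $n$, and Step 2 forces $n=0$; since $\bm A(\omega,\delta)=\bm A_{(0)}(\omega,\delta)$ by \eqref{MSW}, this means $\det\bm A(\omega(\delta),\delta)=0$. Conversely, if $\det\bm A(\omega(\delta),\delta)=0$ with $\omega(\delta)$ depending continuously on $\delta$ and $\omega(\delta)\to0$ as $\delta\to0$, one picks a nonzero $\bm a^\pm_{(0)}\in\ker\bm A(\omega,\delta)$; the corresponding radially symmetric function given by \eqref{Helm_solution2} is then a nontrivial solution of \eqref{main_equation} with $u^{in}=0$, so $\omega(\delta)$ is a subwavelength resonant frequency in the sense of Definition \ref{defn:resonance}. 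That such zeros exist, and that exactly $N$ of them have positive real part, is the content of Theorem \ref{thmres}.

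The main obstacle is the monopole reduction of Step 2: one must show that the normalized leading-order determinant of the $4N\times4N$ block-tridiagonal matrix $\bm A_{(n)}$ stays bounded away from zero in the subwavelength regime for \emph{every} $n\ge1$, so that no spurious small resonance survives in a higher mode. This requires careful bookkeeping of the powers of $\omega$ generated by the Bessel and Hankel asymptotics in each $2\times2$ block, followed by an inductive evaluation of the resulting determinant; the fact that the asymptotic separation between $j_n$ and $h^{(1)}_n$ only sharpens as $n$ grows is what rules out resonances in all higher modes simultaneously.
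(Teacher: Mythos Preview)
Your proposal is correct and follows precisely the route the paper takes: the paper does not supply a formal proof of Proposition~\ref{FCR} but rather states it as a consequence of the spherical-harmonic decoupling together with the monopole reduction, invoking the asymptotic arguments of \cite{DKLLZ_MLHC,AFHLY_JDE2019} and the justification in Section~\ref{MSmonopolar} for the claim that only $n=0$ contributes in the subwavelength regime. Your Steps~1--2 make this reasoning explicit, and your identification of the $n\ge1$ nondegeneracy as the only nontrivial point matches exactly what the paper defers to the cited references.
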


Although Proposition \ref{FCR} provides a complete characterization of the solution to \eqref{main_equation}, it does not directly facilitate the prediction of scattering resonances.
This is primarily due to the computational complexity involved in deriving the asymptotic expansions of the determinant $\det(\bm{A}(\omega,\delta))$ of the $4N$-by-$4N$ matrix $\bm{A}(\omega,\delta)$ by using the following asymptotic expansions:
\begin{equation}\label{besssmall}
j_0(t)=\frac{\sin t}{t} =  1 - \frac{t^2}{6} + \frac{t^4}{120} + O(t^6),
\end{equation}
\begin{equation}\label{hankelsmall}
h^{(1)}_0(t) =\frac{\sin t}{t}-\i \frac{\cos t}{t} = 1 - \frac{t^2}{6} + \frac{t^4}{120} + \mathrm{i}(-\frac{1}{t} + \frac{t}{2} - \frac{t^3}{24})+ O(t^5),
\end{equation}
for $t\ll 1$.
We remark in Subsection \ref{D2Nmap} that the scattering problem \eqref{main_equation} can be reformulated as a $2N\times 2N$ linear system based on the Dirichlet-to-Neumann (DtN) map, which gives a second characterization of the resonances.

\subsection{ A second characterization of resonances based on the DtN map}\label{D2Nmap}

In this section, we characterize the DtN map of the Helmholtz problem \eqref{main_equation} in MLHC metamaterial $D = \cup_{j=1}^{N}D_{j}$, with the aim of applying the DtN approach developed in \cite{FA_JMPA2024} to analyze subwavelength resonances.

The following lemma provides an explicit expression for the solution
to exterior Dirichlet  problems on $\RR^3\setminus D$.

\begin{lem}
	\label{def:DTN}
	For any $k\in\mathbb{C}\backslash\{ m\pi/(r_j^--r_{j+1}^+)\,|\, m\in\ZZ\backslash\{0\},\, 1\leq j\leq N-1 \}$ and any vector $  (f_j^{\pm})_{1\leq j\leq N} \in\mathbb{C}^{2N}$, there exists a unique solution  $v_f\in H^{1}(\RR^3)$ to the exterior Dirichlet problem:
	\begin{equation} \label{exterior_problem}
	\begin{cases}
	\ds\Delta v_f + k^2 v_f = 0 & \text{in  } \RR^3\setminus D, \\
	\nm
	\ds  {v_f}|_{\Gamma_j^\pm}   = f_j^\pm &  \text{for } j =1,2,\ldots,N, \\
	\nm
	\ds v_f &  \mbox{satisfies the Sommerfeld radiation condition.}
	\end{cases}
	\end{equation}
	Furthermore, when $k\neq 0$, the solution $v_f$ can be represented by
	\begin{equation}
	\label{ext_prob_solu}
	v_f = \begin{cases}
	\ds f_1^+\frac{h_0^{(1)}(kr)}{h_0^{(1)}(k r_1^+)}, & x\in D_0', \\
	\nm
	\ds  a_j j_0(kr)+b_j h_0^{(1)}(kr), & x\in D_j'\quad j =1,2,\ldots,N-1, \\
	\nm
	\ds f_N^-\frac{j_0(kr)}{j_0(k r_N^-)}, & x\in D_N', \\
	\end{cases}
	\end{equation}
	where $a_j$ and $b_j$ are given by
	\begin{equation}
	\label{ajbj}
	\begin{pmatrix}
	a_j\\
	b_j
	\end{pmatrix} = \frac{\i k^2 r_j^- r_{j+1}^+}{\sin(k(r_j^--r_{j+1}^+))} \begin{pmatrix}
h_0^{(1)}(kr_{j+1}^+) & -h_0^{(1)}(kr_j^-) \\
	-j_0(kr_{j+1}^+) & 	j_0(kr_j^-)
	\end{pmatrix}
	\begin{pmatrix}
	f_j^{-}\\
	f_{j+1}^{+}
	\end{pmatrix}.
	\end{equation}
\end{lem}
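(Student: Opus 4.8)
The plan is to reduce the exterior problem \eqref{exterior_problem} to a family of uncoupled radial ordinary differential equations, exploiting the concentric geometry. The first and main structural observation is that $\RR^3\setminus D=\bigcup_{j=0}^N D_j'$ is a disjoint union of $N+1$ spherical pieces --- the exterior domain $D_0'=\{r>r_1^+\}$, the bounded shells $D_j'=\{r_{j+1}^+<r\leq r_j^-\}$ for $1\leq j\leq N-1$, and the ball $D_N'=\{r\leq r_N^-\}$ --- and that each sphere $\Gamma_j^\pm$ bounds exactly one of them. Since the Helmholtz equation and the radiation condition are imposed in the interior of $\RR^3\setminus D$ while the Dirichlet data live on $\partial(\RR^3\setminus D)=\bigcup_j\Gamma_j^\pm$, the problem decouples into $N+1$ independent boundary value problems: an exterior problem on $D_0'$ with data $f_1^+$, a two-sided Dirichlet problem on each shell $D_j'$ with data $(f_{j+1}^+,f_j^-)$, and an interior Dirichlet problem on the ball $D_N'$ with data $f_N^-$.

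Next, on each piece I would use rotational symmetry: the domain, the operator, the (spherically constant) boundary data and the Sommerfeld condition are all invariant under rotations about the origin, so once existence and uniqueness are in hand, averaging over rotations forces $v_f$ to be radial on each $D_j'$. The radial equation $v_f''+\tfrac2r v_f'+k^2v_f=0$ has, for $k\neq0$, solution space spanned by $j_0(kr)$ and $h_0^{(1)}(kr)$. On $D_0'$ the radiation condition kills the $j_0$ component and matching the value at $r_1^+$ gives the first line of \eqref{ext_prob_solu}; on $D_N'$ regularity at the origin kills the $h_0^{(1)}$ component and matching at $r_N^-$ gives the third line; on each shell $D_j'$ one writes $v_f=a_jj_0(kr)+b_jh_0^{(1)}(kr)$ and imposes $v_f(r_{j+1}^+)=f_{j+1}^+$, $v_f(r_j^-)=f_j^-$, a $2\times2$ system whose determinant is $j_0(kr_{j+1}^+)h_0^{(1)}(kr_j^-)-j_0(kr_j^-)h_0^{(1)}(kr_{j+1}^+)=\i\sin\!\big(k(r_j^--r_{j+1}^+)\big)/(k^2r_{j+1}^+r_j^-)$, using $j_0(t)=\sin t/t$ and $h_0^{(1)}(t)=-\i e^{\i t}/t$; Cramer's rule then yields exactly \eqref{ajbj}.

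The part that actually needs care --- and what legitimizes the symmetrization above --- is the well-posedness in $H^1$ (which I would read as $H^1_{\mathrm{loc}}(\RR^3)$, since a genuinely radiating exterior solution is not globally square-integrable unless $\mathrm{Im}\,k>0$). For $D_0'$ this is the classical exterior Helmholtz Dirichlet problem, well-posed via Rellich's uniqueness lemma and the Fredholm alternative, and it is in fact directly solvable because $h_0^{(1)}(kr_1^+)\neq0$ for every $k\neq0$. For each shell $D_j'$ and the ball $D_N'$, solvability in the radial form above requires the $2\times2$ (resp.\ scalar) matching systems to be nonsingular, i.e.\ $\sin(k(r_j^--r_{j+1}^+))\neq0$ and $j_0(kr_N^-)\neq0$; the first batch is precisely the set excluded in the statement, and the second, $k\notin\{m\pi/r_N^-:m\in\ZZ\setminus\{0\}\}$, should be excluded as well (equivalently, enlarge the index range to $1\leq j\leq N$ with the convention $r_{N+1}^+=0$), while genuine \emph{uniqueness} in $H^1$ additionally needs $k^2$ to avoid the higher-spherical-harmonic Dirichlet eigenvalues of the shells --- all of which is automatic in the subwavelength regime $k\to0$ that the paper actually uses. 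Finally the degenerate case $k=0$ is treated directly with the radial harmonics $1$ and $1/r$: one gets $v_f=f_1^+r_1^+/r$ on $D_0'$, $v_f\equiv f_N^-$ on $D_N'$, and on each shell the system with matrix $\big(\begin{smallmatrix}1&1/r_{j+1}^+\\1&1/r_j^-\end{smallmatrix}\big)$, which is invertible since $r_{j+1}^+\neq r_j^-$, giving existence and uniqueness there too.
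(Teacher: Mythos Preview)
Your approach is essentially the same as the paper's: decouple into the $N+1$ spherical pieces, represent the radial solution via $j_0$ and $h_0^{(1)}$, and invert the $2\times2$ boundary systems using the identity $j_0(kr_j^-)h_0^{(1)}(kr_{j+1}^+)-j_0(kr_{j+1}^+)h_0^{(1)}(kr_j^-)=\sin(k(r_j^--r_{j+1}^+))/(\i k^2 r_j^-r_{j+1}^+)$. Your discussion is in fact more careful than the paper's own brief proof --- you correctly flag that $j_0(kr_N^-)\neq0$ and the higher-mode Dirichlet eigenvalues should also be excluded, points the statement and its proof omit but which are immaterial in the subwavelength regime $k\to0$ that the paper actually uses.
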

\begin{proof}[\bf Proof]
	When $k\neq 0$, the solution $v_f$ to \eqref{exterior_problem} may be represented as \eqref{ext_prob_solu}. It follows from the Dirichlet boundary conditions of \eqref{exterior_problem} that the constants $a_j$ and $b_j $, $j=1,2,\ldots,N-1$, satisfy
	\[
	\begin{pmatrix}
	j_0(kr_j^-) & h_0^{(1)}(kr_j^-) \\
	j_0(kr_{j+1}^+) & 	h_0^{(1)}(kr_{j+1}^+)
	\end{pmatrix}
	\begin{pmatrix}
	a_j\\
	b_j
	\end{pmatrix} = \begin{pmatrix}
	f_j^{-}\\
	f_{j+1}^{+}
	\end{pmatrix}.
	\]
	Inverting above equality and using the fact
	\[
	j_0(kr_j^-) h_0^{(1)}(kr_{j+1}^+)-j_0(kr_{j+1}^+))h_0^{(1)}(kr_j^-)= \frac{\sin(k(r_j^--r_{j+1}^+))}{\i k^2 r_j^- r_{j+1}^+},
	\]
	we can get \eqref{ajbj}.
When $k =0$, the uniqueness of exterior Dirichlet problem follows from \cite[Proposition 3.1]{GBF_book1995}. The proof is complete.
\end{proof}

\begin{defn}
	For any $k\in\mathbb{C}\backslash\{ m\pi/(r_j^--r_{j+1}^+)\,|\, m\in\ZZ\backslash\{0\},\, 1\leq j\leq N-1 \}$, the Dirichlet-to-Neumann (DtN) map with wave number $k$ is
	the operator $\mathcal{T}^k : \mathbb{C}^{2N} \rightarrow \mathbb{C}^{2N}$ defined by
	\begin{equation}
	\label{DtN_map}
	\mathcal{T}^{k}[(f_j^{\pm})_{1\leq j\leq N}]=
	\left(  \frac{\partial v_f}{\partial v}|_{\Gamma_j^\pm}\right)_{1 \leq j \leq N},
	\end{equation}
where $v_f$ is the unique solution to \eqref{exterior_problem}.
\end{defn}

\begin{rem}
	The condition that  $k\in\mathbb{C}\backslash\{ m\pi/(r_j^--r_{j+1}^+)\,|\, m\in\ZZ\backslash\{0\},\, 1\leq j\leq N-1 \}$ is equivalent to the assumption that $k^2$ is not a Dirichlet eigenvalue of $-\Delta$ on $\RR^3 \setminus D$, which guarantees the uniqueness of exterior Dirichlet problem \eqref{exterior_problem} and then the well-defined of DtN map \eqref{D2Nmap}.
\end{rem}

We give the exact matrix representation of the DtN map $\mathcal{T}^{k}$ in the following proposition.

\begin{prop} \label{prop:DTN}
	For any $k\in\mathbb{C}\backslash\{ n\pi/(r_j^--r_{j+1}^+)\,|\, n\in\ZZ\backslash\{0\},\, 1\leq j\leq N-1 \}$,  $f:= (f_j^{\pm})_{1\leq j\leq N}$, the DtN map $\mathcal{T}^{k}[f]:= (\mathcal{T}^{k}[ f]_{j}^{\pm})_{1\leq j\leq N}$ admits the following exact matrix representation: 
	\begin{equation}\label{DtN_MR}
	\begin{pmatrix}
	\mathcal{T}^{k}[f]_1^{+}\\
	\mathcal{T}^{k}[f]_{1}^{-}\\
	\mathcal{T}^{k}[f]_2^{+}\\
	\vdots\\
	\mathcal{T}^{k}[f]_{N-1}^{-}\\
	\mathcal{T}^{k}[f]_N^{+}\\
	\mathcal{T}^{k}[f]_{N}^{-}
	\end{pmatrix} =  \begin{pmatrix}
	-\frac{1}{r_1^+}+\i k&&&&&\\
	&A^k_{1,2}&&&&\\
	&&A^k_{2,3}&&&\\
	&&&\ddots &&\\
	&&&&A^k_{N-1,N}&\\
	&&&&&\frac{kr_N^-\cos(kr_N^-)-\sin(kr_N^-)}{r_N^-\sin (kr_N^-)}
	\end{pmatrix} \begin{pmatrix}
	f_1^{+}\\
	f_{1}^{-}\\
	f_2^{+}\\
	\vdots\\
	f_{N-1}^{-}\\
	f_N^{+}\\
	f_{N}^{-}
	\end{pmatrix},
	\end{equation}
	where for $j=1,2,\ldots,N-1,$
\begin{equation}\label{Akjjp1}
A^k_{j,j+1} :=
\begin{pmatrix}
\ds\frac{kr_j^-\cos(k(r_j^--r_{j+1}^+))-\sin (k(r_j^--r_{j+1}^+))}{r_j^- \sin(k(r_j^--r_{j+1}^+))}&  \ds -\frac{kr_{j+1}^+}{r_j^- \sin(k(r_j^--r_{j+1}^+))}\\
\nm
\ds \frac{kr_j^-}{r_{j+1}^+ \sin(k(r_j^--r_{j+1}^+))}&  \ds - \frac{kr_{j+1}^+\cos(k(r_j^--r_{j+1}^+))+\sin (k(r_j^--r_{j+1}^+))}{ r_{j+1}^+ \sin(k(r_j^--r_{j+1}^+))}
\end{pmatrix}.
\end{equation}
\end{prop}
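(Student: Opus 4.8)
The plan is to insert the explicit solution $v_f$ furnished by Lemma \ref{def:DTN} into the definition \eqref{DtN_map} of the DtN map and differentiate, after first noting that the matrix of $\mathcal{T}^k$ is automatically block diagonal. Throughout I take $\nu$ to be the radially outward unit normal $x/|x|$ on every sphere $\Gamma_j^{\pm}$, so that $\partial v_f/\partial\nu|_{\Gamma_j^{\pm}}=\partial_r v_f\big|_{r=r_j^{\pm}}$, and I give the argument for $k\neq0$ (the value $k=0$ is an elementary special case, consistent with Lemma \ref{def:DTN}). By the representation \eqref{ext_prob_solu}, the restriction of $v_f$ to $D_0'$ depends only on $f_1^+$, its restriction to an annular gap $D_j'$ with $1\le j\le N-1$ depends only on the pair $(f_j^-,f_{j+1}^+)$ through the coefficients $a_j,b_j$ of \eqref{ajbj}, and its restriction to $D_N'$ depends only on $f_N^-$. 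Since each of the spheres $\Gamma_1^+$, $\Gamma_j^-$, $\Gamma_{j+1}^+$ $(1\le j\le N-1)$ and $\Gamma_N^-$ is a boundary component of exactly one of the regions $D_0',\dots,D_N'$ on which $v_f$ lives — namely $D_0'$, $D_j'$, $D_j'$ and $D_N'$ respectively — ordering the output as in \eqref{DtN_MR} groups the normal derivatives exactly into a $1\times1$ block in $f_1^+$, the $2\times2$ blocks in $(f_j^-,f_{j+1}^+)^{T}$ for $1\le j\le N-1$, and a final $1\times1$ block in $f_N^-$; it then only remains to evaluate these blocks.

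For the corner blocks: on $D_0'$ one has $v_f=f_1^+\,h^{(1)}_0(kr)/h^{(1)}_0(kr_1^+)$, hence $\mathcal{T}^k[f]_1^+=k\,\big(h^{(1)'}_0(kr_1^+)/h^{(1)}_0(kr_1^+)\big)f_1^+$; writing $h^{(1)}_0(t)=-\i\,\e^{\i t}/t$ gives $h^{(1)'}_0(t)/h^{(1)}_0(t)=\i-1/t$, which produces the top-left entry $-1/r_1^++\i k$. Likewise, on $D_N'$ one has $v_f=f_N^-\,j_0(kr)/j_0(kr_N^-)$, hence $\mathcal{T}^k[f]_N^-=k\,\big(j_0'(kr_N^-)/j_0(kr_N^-)\big)f_N^-$, and $j_0(t)=\sin t/t$ gives $j_0'(t)/j_0(t)=(t\cos t-\sin t)/(t\sin t)$, which is the stated bottom-right entry.

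For a $2\times2$ block, fix $1\le j\le N-1$. On $D_j'$, $v_f=a_j j_0(kr)+b_j h^{(1)}_0(kr)$ with $(a_j,b_j)^{T}$ given by \eqref{ajbj}, so
\[
\mathcal{T}^k[f]_j^-=k\big(a_j j_0'(kr_j^-)+b_j h^{(1)'}_0(kr_j^-)\big),\qquad
\mathcal{T}^k[f]_{j+1}^+=k\big(a_j j_0'(kr_{j+1}^+)+b_j h^{(1)'}_0(kr_{j+1}^+)\big).
\]
Inserting the matrix of \eqref{ajbj} and reading off the coefficients of $f_j^-$ and of $f_{j+1}^+$ reduces the claim to four scalar identities. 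The two off-diagonal entries of $A^k_{j,j+1}$ follow immediately from the order-zero Wronskian relation $j_0(t)h^{(1)'}_0(t)-j_0'(t)h^{(1)}_0(t)=\i/t^2$, after which the prefactor $\i k^2 r_j^- r_{j+1}^+/\sin\!\big(k(r_j^--r_{j+1}^+)\big)$ in \eqref{ajbj} cancels against $1/(kr_j^-)^2$ or $1/(kr_{j+1}^+)^2$. The two diagonal entries are the only place where real care is needed: they are combinations of $j_0,j_0',h^{(1)}_0,h^{(1)'}_0$ taken at the two different radii, and here I would expand $j_0$ and $h^{(1)}_0$ in terms of $\sin$, $\cos$ and complex exponentials and apply the addition identities $\e^{\i a}\cos b-\i\sin a\,\e^{\i b}=\cos(a-b)$ and $\sin a\,\e^{\i b}-\sin b\,\e^{\i a}=\sin(a-b)$ with $a=kr_j^-$, $b=kr_{j+1}^+$; together with the common $\sin\!\big(k(r_j^--r_{j+1}^+)\big)$ prefactor this reproduces exactly the diagonal entries of $A^k_{j,j+1}$ displayed in \eqref{Akjjp1}. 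I expect no conceptual obstacle beyond this; the genuine hazards are keeping the orientation of $\nu$ consistent across all the spheres and shepherding the cancellation of the many factors of $k$ and $r_j^{\pm}$ so that the stated closed forms emerge.
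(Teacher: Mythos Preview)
Your proposal is correct and follows essentially the same route as the paper's proof: both differentiate the explicit solution \eqref{ext_prob_solu} from Lemma~\ref{def:DTN}, handle the two corner entries via the logarithmic derivatives of $h_0^{(1)}$ and $j_0$, and obtain each $2\times2$ block as the product of the derivative matrix $\begin{pmatrix} j_0'(kr_j^-) & h_0^{(1)'}(kr_j^-)\\ j_0'(kr_{j+1}^+) & h_0^{(1)'}(kr_{j+1}^+)\end{pmatrix}$ with the matrix in \eqref{ajbj}. The paper simply asserts that this product equals $A^k_{j,j+1}$, whereas you spell out the mechanism (Wronskian for the off-diagonal entries, the addition formulas for the diagonal ones), which is a welcome elaboration but not a different strategy.
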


\begin{proof}[\bf Proof]
In view of \eqref{ext_prob_solu}--\eqref{DtN_map}, we have that for $j=1,2,\ldots,N-1$,
\[
\begin{aligned}
\begin{pmatrix}
\mathcal{T}^{k}[f]_j^{-}\\
\mathcal{T}^{k}[f]_{j+1}^{+}
\end{pmatrix} &= k\begin{pmatrix}
 j'_0(kr_j^-)&  h_0^{(1)'}(kr_j^-)\\
\nm
j'_0(kr_{j+1}^+)&  h_0^{(1)'}(kr_{j+1}^+)
\end{pmatrix}
\begin{pmatrix}
a_j\\
b_j
\end{pmatrix}\\
& =\frac{\i k^3 r_j^- r_{j+1}^+}{\sin(k(r_j^--r_{j+1}^+))}\begin{pmatrix}
j'_0(kr_j^-)&  h_0^{(1)'}(kr_j^-)\\
\nm
j'_0(kr_{j+1}^+)&  h_0^{(1)'}(kr_{j+1}^+)
\end{pmatrix}
\begin{pmatrix}
h_0^{(1)}(kr_{j+1}^+) & -h_0^{(1)}(kr_j^-) \\
-j_0(kr_{j+1}^+) & 	j_0(kr_j^-)
\end{pmatrix}
\begin{pmatrix}
f_j^{-}\\
f_{j+1}^{+}
\end{pmatrix}\\
& = A^k_{j,j+1}
\begin{pmatrix}
f_j^{-}\\
f_{j+1}^{+}
\end{pmatrix}.
\end{aligned}
\]
Moreover, we have
\[
\mathcal{T}^{k}[f]_{1}^{+} = \frac{kh_0^{(1)'}(kr_1^+)}{h_0^{(1)}(k r_1^+)} f_1^+ = \(-\frac{1}{r_1^+}+\i k\)f_1^+,
\]
 and
\[
\mathcal{T}^{k}[f]_N^{-} = \frac{kj_0'(kr_N^-)}{j_0(k r_N^-)} f_N^-  =\frac{kr_N^-\cos(kr_N^-)-\sin(kr_N^-)}{r_N^-\sin (kr_N^-)}f_N^-.
\]
The proof is complete.
\end{proof}

It can be verified that the solution $v_f$ to \eqref{exterior_problem} with $k\neq 0$ converges as $k \rightarrow 0$ to the solution to \eqref{exterior_problem} with $k=0$. As expected from the matrix representation \eqref{DtN_MR}, the operator $\mathcal{T}^{k}$  is analytic in a neighborhood of $k=0$. In all what follows, we denote by $R$ the convergence radius
\[
R:=\frac{\pi }{\max_{1\le j\le N-1}\{r_j^--r_{j+1}^+\}}.
\]
\begin{cor}\label{coranalytic_DtN}
	The DtN map $\mathcal{T}^{k}$ can be extended to a holomorphic $2N\times 2N$ matrix with respect to the wave number $k\in  \{k\in \mathbb{C}:| k| <R\}$. Therefore, there exists a family of $2N\times 2N$ matrices $(\mathcal{T}_{n})_{n\in \mathbb{N}}$ such that the following convergent series representation holds:
	\begin{equation}
	\label{DTN_expansion}
	\mathcal{T}^{k} =
	\sum_{n=0}^{+ \infty} k^{n} \mathcal{T}_{n},\qquad \forall k\in\mathbb{C}
	\text{ with }|k|<R.
	\end{equation}
Particularly,  the matrices $\mathcal{T}_{0}$ and $\mathcal{T}_{1}$ can be, respectively, given by
\begin{equation}\label{T_0}
\mathcal{T}_{0}= \begin{pmatrix}
-\frac{1}{r_1^+}&&&&&\\
&A^0_{1,2}&&&&\\
&&A^0_{2,3}&&&\\
&&&\ddots &&\\
&&&&A^0_{N-1,N}&\\
&&&&&0
\end{pmatrix}
\text{ and }
\mathcal{T}_{1} =
\begin{pmatrix}
\i&&&&&\\
&0&&&&\\
&&0&&&\\
&&&\ddots &&\\
&&&&0&\\
&&&&&0
\end{pmatrix},
\end{equation}
where
\[
A^0_{j,j+1} =
\begin{pmatrix}
\ds \frac{r_{j+1}^+}{r_j^- (r_j^--r_{j+1}^+)}&  \ds -\frac{r_{j+1}^+}{r_j^- (r_j^--r_{j+1}^+)}\\
\nm
\ds \frac{r_j^-}{r_{j+1}^+ (r_j^--r_{j+1}^+)}&  \ds -\frac{r_{j}^-}{r_{j+1}^+ (r_j^--r_{j+1}^+)}
\end{pmatrix},\quad j =1,2,\ldots,N-1.
\]
\end{cor}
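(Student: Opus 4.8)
The plan is to establish the claimed holomorphic extension of $\mathcal{T}^{k}$ by inspecting the explicit matrix representation \eqref{DtN_MR}--\eqref{Akjjp1} obtained in Proposition \ref{prop:DTN}. Every entry of that matrix is built from the functions $k\mapsto kr\cos(k\rho)/\sin(k\rho)$, $k\mapsto k/\sin(k\rho)$ and $k\mapsto \i k$ for the various gap widths $\rho\in\{r_j^--r_{j+1}^+\}$ and radii $r$. The only possible singularities come from the zeros of $\sin(k(r_j^--r_{j+1}^+))$ and of $\sin(kr_N^-)$; since $r_N^- > r_N^- - r_{N+1}^+$ is not in play, and more to the point $r_N^-$ exceeds every gap width only if... — rather, one simply notes the relevant denominators vanish exactly on the excluded set $\{m\pi/(r_j^--r_{j+1}^+)\}$ together with $\{m\pi/r_N^-\}$. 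I would first argue that on the disk $|k|<R$ the only candidate singularity is $k=0$: indeed $|k|<R$ forces $|k(r_j^--r_{j+1}^+)|<\pi$ for each $j$, so $\sin(k(r_j^--r_{j+1}^+))\neq 0$ except at $k=0$, and likewise one checks $r_N^-$ does not create a closer pole (this needs a short remark, see the obstacle below). Hence each entry is a ratio of entire functions whose denominator vanishes only at $k=0$, where it has a simple zero.

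Second, I would show the apparent pole at $k=0$ is removable. Using $\sin t = t - t^3/6 + O(t^5)$ and $\cos t = 1 - t^2/2 + O(t^4)$, one computes directly that
\[
\frac{kr\cos(k\rho)-\sin(k\rho)}{r\sin(k\rho)} = -\frac{\rho}{r}\cdot\frac{1}{\rho - k\ldots} + \cdots
\]
— more cleanly, factor $\sin(k\rho) = k\rho\,\mathrm{sinc}(k\rho)$ with $\mathrm{sinc}$ entire and nonvanishing on $|k\rho|<\pi$, so $1/\sin(k\rho) = (k\rho)^{-1}(\text{holomorphic, nonzero})$; then each matrix entry is $k^{-1}$ times a holomorphic function, and the $k^{-1}$ is cancelled by the explicit numerators, which all carry a factor $k$ (e.g. $k/\sin(k\rho)$, and $kr\cos(k\rho)-\sin(k\rho) = O(k^3)$ near $0$ after the leading terms cancel: $kr\cos(k\rho)-\sin(k\rho) = (r-\rho)k + O(k^3)$, divided by $r\sin(k\rho)\sim r\rho k$ gives the finite limit $(r-\rho)/(r\rho)$). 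Carrying this out entrywise yields both the holomorphy on $|k|<R$ and, by reading off the constant and linear Taylor coefficients, the explicit formulas for $\mathcal{T}_0$ and $\mathcal{T}_1$ in \eqref{T_0}; the $(1,1)$ entry $-1/r_1^+ + \i k$ is already affine so contributes $-1/r_1^+$ to $\mathcal{T}_0$ and $\i$ to $\mathcal{T}_1$, while every other block has a purely real leading term and vanishing linear term, matching the stated $\mathcal{T}_1$. The convergent power series \eqref{DTN_expansion} is then just the Taylor expansion of a matrix holomorphic on the disk.

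The main obstacle I anticipate is the bookkeeping around the radius $R$ and the bottom-right entry $\bigl(kr_N^-\cos(kr_N^-)-\sin(kr_N^-)\bigr)/\bigl(r_N^-\sin(kr_N^-)\bigr)$: its denominator vanishes at $k = m\pi/r_N^-$, and one must make sure these are not closer to the origin than $R = \pi/\max_j(r_j^- - r_{j+1}^+)$. This is where the geometry of nested resonators enters — since $\Gamma_N^-$ is interior to $\Gamma_{N-1}^-$ one has $r_N^- < r_{N-1}^- - r_N^+ \le \ldots$ is \emph{not} automatic, so strictly one should either (i) enlarge the excluded set / shrink $R$ to $\pi/\max\{\,r_j^--r_{j+1}^+,\ r_N^-\,\}$, or (ii) observe that the first bullet of \eqref{ext_prob_solu} on $D_0'$ is singularity-free ($h_0^{(1)}$ has no real zeros) so only the $N-1$ gap terms and the innermost ball matter, and then argue $r_N^-$ is dominated. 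I would handle this by a brief remark fixing the precise meaning of $R$ consistent with Corollary \ref{coranalytic_DtN} as stated, after which the analyticity claim and the termwise identification of $\mathcal{T}_0,\mathcal{T}_1$ follow by the elementary expansions above; beyond this point everything is routine Taylor-coefficient extraction.
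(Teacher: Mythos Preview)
Your approach is essentially the same as the paper's: inspect the explicit block-diagonal representation \eqref{DtN_MR}--\eqref{Akjjp1}, observe that each block $A^k_{j,j+1}$ is analytic for $|k|(r_j^--r_{j+1}^+)<\pi$, check the corner entries via the Bessel/Hankel expansions \eqref{besssmall}--\eqref{hankelsmall}, and read off the constant and linear Taylor coefficients to obtain $\mathcal{T}_0$ and $\mathcal{T}_1$. The paper's proof is in fact much terser than yours and simply records $kj_0'(kr_N^-)/j_0(kr_N^-)=O(k^2)$, $kh_0^{(1)'}(kr_1^+)/h_0^{(1)}(kr_1^+)=-1/r_1^++\i k$, and $A^k_{j,j+1}=A^0_{j,j+1}+O(k^2)$.

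Your ``obstacle'' about the bottom-right entry is a genuine observation: the zeros of $\sin(kr_N^-)$ at $k=m\pi/r_N^-$ are \emph{not} automatically outside the disk $|k|<R=\pi/\max_j(r_j^--r_{j+1}^+)$, since nothing in the nested geometry forces $r_N^-\le\max_j(r_j^--r_{j+1}^+)$. The paper's proof does not address this either --- it only claims analyticity of the $A^k_{j,j+1}$ blocks and then passes to the local expansions near $k=0$. So the stated radius $R$ should, strictly speaking, be $\pi/\max\{r_N^-,\ \max_j(r_j^--r_{j+1}^+)\}$; this is a minor defect in the corollary's statement rather than in your argument, and it has no effect on anything downstream since only a neighborhood of $k=0$ is ever used.
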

\begin{proof}[\bf Proof]
The result is immediate by noticing that the matrix
$A^k_{j,j+1}$ of \eqref{Akjjp1} is analytic with respect to the $k$ satisfying $|k|(r_j^--r_{j+1}^+)<\pi$ for $j=1,2,\ldots,N-1$. From \eqref{besssmall} and \eqref{hankelsmall}, we have
\begin{equation}
\frac{kj_0'(k r_N^-)}{j_0(k r_N^-)} = O(k^2)\;\text{ and } \,
\frac{kh_0^{(1)'}(kr_1^+)}{h_0^{(1)}(k r_1^+)} = -\frac{1}{r_1^+} +\i k.
\end{equation}
This, together with $A^k_{j,j+1} = A^0_{j,j+1}+O(k^2)$, implies that \eqref{T_0} holds. The proof is complete.
\end{proof}

\begin{rem}
	Based on \eqref{DtN_MR} and \eqref{T_0}, we have that for a vector $f:= (f^\pm_i )_{1\leq i\leq N}\in \mathbb{C}^{2N}$,
	\begin{equation} \label{T_0expre}
	\begin{cases}
	\ds\mathcal{T}_0[f]_1^{+} = -\frac{1}{r_1^+}f_1^{+}, &  \\
	\nm
	\ds   \mathcal{T}_0[f]_j^{+} = \frac{r_{j-1}^-}{r_{j}^+ (r_{j-1}^--r_{j}^+)}  f_{j-1}^- -\frac{r_{j-1}^-}{r_{j}^+ (r_{j-1}^--r_{j}^+)} f_{j}^+, &j =2,3,\ldots,N, \\
	\nm
	\ds   \mathcal{T}_0[f]_j^{-} = \frac{r_{j+1}^+}{r_j^- (r_j^--r_{j+1}^+)} f_{j}^-  -\frac{r_{j+1}^+}{r_j^- (r_j^--r_{j+1}^+)}f_{j+1}^+, & j =1,2,\ldots,N-1,  \\
	\nm
	\ds  \mathcal{T}_0[f]_N^{-} = 0.&
	\end{cases}
	\end{equation}
\end{rem}

Based upon the properties of the DtN map above, we now provide a second characterization of resonances in the MLHC structure $D = \cup_{j=1}^{N}D_{j}$ as illustrated in Figure \ref{MLHCCB}.
The solution to the acoustic scattering problem \eqref{main_equation} in $D$ can be rewritten in terms of the DtN map $\mathcal{T}^k$ for $u\in H^{1}(D)$
\begin{equation} \label{D2N}
\begin{cases}
\ds\Delta u  + \frac{\omega^2}{v_\rmr^2} u = 0 & \text{in  } D, \\
\nm
\ds   \ddp{u}{\nu}|_\mp = \delta \mathcal{T}^k[u-u^{in}]_j^\pm + \delta\ddp{u^{in}}{\nu}  & \text{on }\Gamma^\pm_j, \; j =1,2,\ldots,N.
\end{cases}
\end{equation}

\begin{prop}
Any solution $u$ to the system \eqref{D2N} can be written as
\[
u(x) = a_i j_0(k_\rmr r) + b_i h_0^{(1)}(k_\rmr r),\quad x\in D_i,\;i = 1,2,\ldots,N,
\]
where the constants satisfy
\begin{equation}\label{dtnls}
\mathbb{A}(\omega,\delta)[\bm{a}] = \delta(- \mathcal{T}^k[u^{in}]_1^+ +  \ddp{u^{in}}{\nu}|_{\Gamma_1^+},- \mathcal{T}^k[u^{in}]_1^- +  \ddp{u^{in}}{\nu}|_{\Gamma_1^-},\ldots,- \mathcal{T}^k[u^{in}]_N^- +  \ddp{u^{in}}{\nu}|_{\Gamma_N^-})^T.
\end{equation}
Here $\bm{a}: = (a_1,b_1,a_2,b_2,\ldots,a_N,b_N)^T$ and the $2N\times 2N$ matrix $\mathbb{A}(\omega,\delta)$ is given by
\begin{equation}\label{DtNmap}
\begin{aligned}
\mathbb{A}(\omega,\delta): = -k_\rmr\diag (A'_1,A'_2,\ldots,A'_N)-\delta \mathcal{T}^k\diag (A_1,A_2,\ldots,A_N)
\end{aligned}
\end{equation}
with
\[
A'_j = \begin{pmatrix}
\ds j_1(k_\rmr r_j^+)&  \ds h_1^{(1)}(k_\rmr r_j^+)\\
\nm
\ds j_1(k_\rmr r_j^-) &  \ds h_1^{(1)}(k_\rmr r_j^-)
\end{pmatrix}
\;\text{ and }\;
A_j = \begin{pmatrix}
\ds j_0(k_\rmr r_j^+)&  \ds h_0^{(1)}(k_\rmr r_j^+)\\
\nm
\ds j_0(k_\rmr r_j^-) &  \ds h_0^{(1)}(k_\rmr r_j^-)
\end{pmatrix}
\;\text{ for } j=1,2,\ldots,N.
\]
\end{prop}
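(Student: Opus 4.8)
The plan is constructive: solve the interior Helmholtz equation shell by shell, convert the Dirichlet and conormal traces of $u$ into matrix actions on the coefficient vector $\bm a$, and then substitute into the boundary condition in \eqref{D2N} to read off the linear system.

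First fix $i$ and work on $D_i=\{r_i^-<r\le r_i^+\}$. Since $u$ solves $\Delta u+k_\rmr^2u=0$ there, separation of variables writes $u|_{D_i}$ as a spherical-harmonic series with radial parts $j_n(k_\rmr r)$ and $h_n^{(1)}(k_\rmr r)$; within the monopole ($n=0$) sector that underlies the DtN construction of this section --- the only sector on which $\mathcal{T}^k:\mathbb{C}^{2N}\to\mathbb{C}^{2N}$ acts, and the one forced by requiring the Dirichlet traces in \eqref{D2N} to lie in $\mathbb{C}^{2N}$ --- the radial part solves $u''+\tfrac2r u'+k_\rmr^2u=0$, whose solution space is spanned by $j_0(k_\rmr r)$ and $h_0^{(1)}(k_\rmr r)$. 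Hence $u(x)=a_ij_0(k_\rmr r)+b_ih_0^{(1)}(k_\rmr r)$ on $D_i$ for some $a_i,b_i\in\mathbb{C}$, which proves the first assertion. Evaluating at $r=r_i^\pm$ and stacking the traces in the order $(u|_{\Gamma_1^+},u|_{\Gamma_1^-},u|_{\Gamma_2^+},\dots,u|_{\Gamma_N^-})^{T}$ used in Proposition \ref{prop:DTN} gives exactly $\diag(A_1,\dots,A_N)\,\bm a$ with the $A_i$ of the statement.

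Next I compute the conormal traces. On every sphere $\Gamma_i^\pm$ the outward normal $\nu$ fixed by the transmission conditions of \eqref{main_equation} is radial, so $\partial u/\partial\nu=\partial u/\partial r$; using $j_0'(t)=-j_1(t)$ and $h_0^{(1)\prime}(t)=-h_1^{(1)}(t)$, differentiation of the ansatz gives $\partial u/\partial\nu=-k_\rmr\big(a_ij_1(k_\rmr r)+b_ih_1^{(1)}(k_\rmr r)\big)$ on $D_i$. The traces appearing on the left of \eqref{D2N} are the one-sided derivatives taken from inside the resonator, namely $\partial u/\partial\nu|_-$ on $\Gamma_i^+$ and $\partial u/\partial\nu|_+$ on $\Gamma_i^-$; evaluating the formula above at $r=r_i^+$ and $r=r_i^-$ and stacking over $i$ yields $-k_\rmr\,\diag(A'_1,\dots,A'_N)\,\bm a$ with the $A'_i$ of the statement.

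Finally, insert these two identities into the boundary condition of \eqref{D2N}. By linearity of the DtN operator together with the Dirichlet-trace identity above, $\mathcal{T}^k[u-u^{in}]=\mathcal{T}^k\diag(A_1,\dots,A_N)\bm a-\mathcal{T}^k[u^{in}]$; equating $-k_\rmr\,\diag(A'_1,\dots,A'_N)\,\bm a$ with $\delta\big(\mathcal{T}^k\diag(A_1,\dots,A_N)\bm a-\mathcal{T}^k[u^{in}]+\partial u^{in}/\partial\nu\big)$ and moving the $\bm a$-terms to the left produces $\big(-k_\rmr\,\diag(A'_1,\dots,A'_N)-\delta\,\mathcal{T}^k\diag(A_1,\dots,A_N)\big)\bm a=\delta\big(-\mathcal{T}^k[u^{in}]+\partial u^{in}/\partial\nu\big)$, which is precisely \eqref{dtnls} with $\mathbb{A}(\omega,\delta)$ as in \eqref{DtNmap}. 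No step is genuinely hard; the only care required is bookkeeping --- matching each $\pm/\mp$ side in \eqref{D2N} to the correct one-sided radial derivative evaluated from inside $D_i$, keeping the order of the $2N$ boundary components consistent with the block-diagonal structure of $\mathcal{T}^k$, and invoking the reduction to the monopole mode.
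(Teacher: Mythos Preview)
Your proof is correct and follows essentially the same approach as the paper: write the monopole solution in each shell, compute the Dirichlet and Neumann traces (using $j_0'=-j_1$, $h_0^{(1)\prime}=-h_1^{(1)}$), and substitute into the boundary condition of \eqref{D2N} to read off the linear system. The paper's own proof is a two-line version of exactly this computation; your version is more explicit about the monopole reduction, the trace ordering, and the $\pm/\mp$ bookkeeping, but the underlying argument is identical.
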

\begin{proof}[\bf Proof]
It follows from the boundary condition of \eqref{D2N} that for $i = 1,2,\ldots,N$,
\[
k_\rmr\(a_i j_0'(k_\rmr r_i^\pm) + b_i h_0^{(1)'}(k_\rmr r_i^\pm)\)-\delta \mathcal{T}^k[u]_i^\pm = -\delta \mathcal{T}^k[u^{in}]_i^\pm + \delta \ddp{u^{in}}{\nu}|_{\Gamma_i^\pm}
\]
This, together with $f'_0(t) = -f_1(t)$ for both $f_n = j_n$ and $f_n = h_n^{(1)}$ when $n=1,2$, implies that  \eqref{dtnls} holds.
\end{proof}

\begin{prop}\label{SCR}
	subwavelength resonant frequencies $\omega=\omega(\delta)\in\mathbb{C}$ are determined by $\det(\mathbb{A}(\omega(\delta),\delta))=0$.
\end{prop}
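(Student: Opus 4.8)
The plan is to convert the defining property~(i) of Definition~\ref{defn:resonance} into the algebraic condition $\det(\mathbb{A}(\omega,\delta))=0$ by exhibiting a one-to-one correspondence between resonant modes of \eqref{main_equation} with $u^{in}=0$ and nonzero vectors in the kernel of $\mathbb{A}(\omega,\delta)$; property~(ii) of Definition~\ref{defn:resonance} then merely selects, among the zeros of $\omega\mapsto\det(\mathbb{A}(\omega,\delta))$, those branches $\omega(\delta)$ tending to $0$ as $\delta\to 0$. First I would restrict to the regime in which the DtN reformulation is legitimate: by Definition~\ref{defn:resonance}(ii) and Theorem~\ref{thmres}, any candidate subwavelength resonant frequency satisfies $\omega\to 0$ as $\delta\to 0$ and has positive real part for $\delta>0$, so for all sufficiently small $\delta$ one has $0<|k|=|\omega|/v<R$; hence $k^2$ is not a Dirichlet eigenvalue of $-\Delta$ on $\RR^3\setminus D$, the DtN map $\mathcal{T}^k$ is well defined and holomorphic (Lemma~\ref{def:DTN}, Corollary~\ref{coranalytic_DtN}), and $k_\rmr=\tau k$ is nonzero and small. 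This is the window in which \eqref{main_equation} and \eqref{D2N} are equivalent.

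Next I would carry out this equivalence carefully. Given a solution $u$ of \eqref{main_equation} with $u^{in}=0$, its restriction to $D$ solves the Helmholtz equation with wavenumber $k_\rmr$, and, setting $f:=(u|_{\Gamma_j^\pm})_{1\le j\le N}$, the scattered field $u^s=u$ is the unique radiating solution of the exterior Dirichlet problem \eqref{exterior_problem} with data $f$, i.e. $u|_{\RR^3\setminus D}=v_f$ by Lemma~\ref{def:DTN}. The transmission conditions $\delta\,\partial_\nu u|_+=\partial_\nu u|_-$ on $\Gamma_j^+$ and $\partial_\nu u|_+=\delta\,\partial_\nu u|_-$ on $\Gamma_j^-$ then read exactly $\partial_\nu u|_\mp=\delta\,\partial_\nu v_f|_{\Gamma_j^\pm}=\delta\,\mathcal{T}^k[u]_j^\pm$, which is the boundary condition of \eqref{D2N} with $u^{in}=0$. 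Conversely, any $u\in H^1(D)$ solving \eqref{D2N} with $u^{in}=0$, extended to $\RR^3\setminus D$ by the exterior Dirichlet solution $v_f$ with $f=(u|_{\Gamma_j^\pm})$, satisfies all of \eqref{main_equation}: continuity across $\Gamma_j^\pm$ holds by construction of $v_f$, and the two Neumann jump relations are precisely the boundary condition of \eqref{D2N}. Hence $u\mapsto u|_D$ is a bijection between resonant modes of \eqref{main_equation} (with $u^{in}=0$) and nonzero solutions of \eqref{D2N}.

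Finally I would combine this with the preceding proposition, which shows that every solution of \eqref{D2N} has the form $u|_{D_i}=a_i j_0(k_\rmr r)+b_i h_0^{(1)}(k_\rmr r)$ with $\bm a=(a_1,b_1,\dots,a_N,b_N)^T$ solving $\mathbb{A}(\omega,\delta)[\bm a]=\,$RHS with the right-hand side of \eqref{dtnls}. Since $k_\rmr\neq 0$, the functions $j_0(k_\rmr r)$ and $h_0^{(1)}(k_\rmr r)$ are linearly independent on each genuine annulus $D_i$, so $u|_D\equiv 0$ iff $\bm a=0$. When $u^{in}=0$ the right-hand side of \eqref{dtnls} vanishes, so a nontrivial resonant mode exists if and only if $\mathbb{A}(\omega,\delta)$ has a nontrivial kernel, i.e. $\det(\mathbb{A}(\omega,\delta))=0$. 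Together with the selection criterion (ii) of Definition~\ref{defn:resonance}, this is the claimed characterization.

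The step I expect to be the main obstacle is the rigorous two-way passage between \eqref{main_equation} and \eqref{D2N}: one must check that the exterior reduction is reversible, which rests on uniqueness for the exterior Dirichlet problem \eqref{exterior_problem} (Lemma~\ref{def:DTN}), and that no spurious singularity of $\mathcal{T}^k$ --- a Dirichlet eigenvalue of $\RR^3\setminus D$, or a zero of $h_0^{(1)}(kr_1^+)$ or $j_0(kr_N^-)$ --- nor a vanishing of the radial Wronskian determinant of $\{j_0(k_\rmr r),h_0^{(1)}(k_\rmr r)\}$ on some $D_i$ contaminates the determinant condition. All of these are ruled out by confining attention to the subwavelength window $|k|<R$ with $|k_\rmr|$ small, which is exactly where Definition~\ref{defn:resonance}(ii) and Theorem~\ref{thmres} locate the resonances.
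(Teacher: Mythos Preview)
The paper states this proposition without proof, treating it as an immediate corollary of the preceding reformulation: once the scattering problem has been rewritten as \eqref{D2N} and then as the linear system \eqref{dtnls}, the characterization $\det(\mathbb{A}(\omega,\delta))=0$ is taken for granted. Your argument is correct and supplies exactly the details the paper omits --- the two-way equivalence between \eqref{main_equation} with $u^{in}=0$ and \eqref{D2N} via uniqueness of the exterior Dirichlet problem, the linear independence of $j_0(k_\rmr r)$ and $h_0^{(1)}(k_\rmr r)$ on each annulus, and the restriction to the subwavelength window $|k|<R$ where $\mathcal{T}^k$ is well defined --- so you are following the same route as the paper, only more explicitly.
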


So far, we reduce the $4N\times 4N$ problem \eqref{4Nby4N} to a smaller $2N\times 2N$ problem \eqref{dtnls} by using  the DtN map, which gives a second characterization of the resonances. However, it also seem rather tedious to compute asymptotic expansions of the determinant $\det(\mathbb{A}(\omega(\delta),\delta))$. From Lemma \ref{symmetric_res} and Theorem \ref{thmres}, we know that a structure of $N$-layer nested resonators  processes $2N$ eigenfrequencies that  are symmetric about the imaginary axis. Inspired by this fact, in the next subsection, we will continue the dimensionality reduction process using the DtN approach \cite{FA_JMPA2024}, which is crucial in approximating the subwavelength eigenfrequencies and eigenmodes of the MLHC structure with the eigenvalues of an $N\times N$ tridiagonal matrix, which we refer to as the capacitance matrix.

\subsection{A third characterization of resonances based on the DtN approach}

In this subsection, we use the DtN approach \cite{FA_JMPA2024}, which provides a systematic and insightful characterization of subwavelength resonances. This approach not only simplifies the analysis but also enhances the clarity and efficiency of the underlying physical models, making it an invaluable tool in the study of complicated structure.

By using variational principle, \eqref{D2N} can be formulated as the following weak form:
\begin{equation}\label{eq:varpro}
a(u,v) = \langle f,v\rangle_{H^{-1}(D),H^1(D)},  \text{ for }\forall v\in H^1(D),
\end{equation}
where the bilinear form $a(u,v)$ for $u,v\in H^{1}(D) $ is defined by
\[
a(u,v):=
\sum_{i=1}^N \int_{D_i}\left(\nabla u\cdot\nabla \overline{v} - \frac{\omega^2}{v_\rmr^2} u \overline{v}\right) ~\d x  - \delta\sum_{i=1}^N  \left(\int_{\Gamma_i^+}
\overline{v}\, \mathcal{T}^{k}[u]_i^+~\d \sigma  -\int_{\Gamma_i^-}
\overline{v}\, \mathcal{T}^{k}[u]_i^- ~\d \sigma \right),
\]
and
\begin{equation}\label{rightlinear}
\langle f,v\rangle_{H^{-1}(D),H^1(D)} := \delta \sum_{i=1}^N \left(\int_{\Gamma_i^+} \overline{v}\, \(-\mathcal{T}^{k}[u^{in}]_i^+ + \ddp{u^{in}}{\nu}\)~\d \sigma - \int_{\Gamma_i^-} \overline{v}\, \(-\mathcal{T}^{k}[u^{in}]_i^- + \ddp{u^{in}}{\nu}\)~\d \sigma\right).
\end{equation}
Next, we introduce a new bilinear form
\begin{equation}\label{NBF}
a_{\omega,\delta}(u,v) := a(u,v)+\sum_{i=1}^N \(\int_{D_i}u~\d x \int_{D_i}\overline{v}~\d x\).
\end{equation}
For sufficiently small $\delta$ and $\omega$, $a_{\omega, \delta}$ remains coercive as a analytic perturbation of the continuous coercive bilinear form  \cite{FA_JMPA2024}
\[
a_{0,0}(u,v) = \sum_{i=1}^N \int_{D_i}\nabla u\cdot\nabla \overline{v}  ~\d x  +\sum_{i=1}^N \(\int_{D_i}u~\d x \int_{D_i}\overline{v}~\d x\).
\]
Therefore, for any $f\in H^{-1}(D)$, there exists a
unique Lax-Milgram solution $u_f(\omega ,\delta )$ to the problem
\begin{equation}\label{Lax_Milgram_solu}
a_{\omega,\delta}(u_f(\omega ,\delta ),v) = \langle f,v\rangle_{H^{-1}(D),H^1(D)},  \text{ for }\forall v\in H^1(D),
\end{equation}
which is analytic with respect to $\omega$ and $\delta$.
In order to characterize the subwavelength resonances, we introduce $u_j(\omega,\delta)$ satisfying the following variational problems
\begin{equation}\label{eqn:9d5p8}
a_{\omega,\delta}(u_j(\omega,\delta),v)=\int_{D_j}\bar v\d x,\;   \text{ for }\forall v\in H^1(D),\; j=1,2,\ldots,N.
\end{equation}

In the following lemma we show that the functions $u_j(\omega,\delta)$ facilitate a reduction of the $2N\times 2N$ system \eqref{dtnls} to a smaller $N\times N$ linear system.  This reduction significantly simplifies the analysis, and from a cardinality point of view, it is optimal in the sense that it captures the necessary degrees of freedom while reducing the computational complexity, thereby allowing for a more efficient study for the resonant behavior of $N$-layer nested resonators.

\begin{lem}\label{lem32}
	Let $\omega \in \mathbb{C}$ and $\delta \in \mathbb{R}$ belong to a neighborhood of zero.  For any  $f\in H^{-1}(D)$, the variational problem \eqref{eq:varpro} has a unique solution $u:= u(\omega ,\delta )$ if and only if the following $N\times N$ linear system
	\begin{equation}\label{eq654}
	 (\bm{I} - \bm{U}(\omega,\delta))\bm{x}=\bm{F}
	\end{equation}
	has a unique solution $\bm{x}:=\(x_i(\omega,\delta)\)_{1\leq i \leq N}$, where $\bm{U}(\omega,\delta)$ and $\bm{F}$ are, respectively, given by
	\begin{equation}\label{eqn:hnkcq}
	\bm{U}(\omega,\delta):= (\bm{U}_{ij}(\omega,\delta))_{1\leq i,j\leq N}:=
	\left(\int_{D_i}u_j~\d x\right)_{1\leq i,j\leq N},   \end{equation}
	and
	\begin{equation}\label{right_linearsystem}
	\bm{F}:= ({F}_{i})_{1\leq i\leq N}: =
	\left(\int_{D_i}u_f~\d x\right)_{1\leq i\leq N}.
	\end{equation}
Moreover, the solution to \eqref{eq:varpro} can be given by
	\begin{equation}\label{eq656}
	u(\omega ,\delta )=u_f(\omega ,\delta ) + \sum_{j=1}^N x_j(\omega ,\delta )u_j(\omega ,\delta ).
	\end{equation}
\end{lem}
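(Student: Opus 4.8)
The plan is to recognise \eqref{eq654} as the Schur complement obtained by eliminating all degrees of freedom of $u$ except its $N$ ``masses'' $\int_{D_i}u\,\d x$. The starting point is the identity, immediate from \eqref{NBF},
\begin{equation*}
a(u,v)=a_{\omega,\delta}(u,v)-\sum_{i=1}^N\Big(\int_{D_i}u\,\d x\Big)\Big(\int_{D_i}\overline v\,\d x\Big),\qquad u,v\in H^1(D),
\end{equation*}
so that, writing $c_i:=\int_{D_i}u\,\d x$, the problem \eqref{eq:varpro} is \emph{equivalent} to
\begin{equation*}
a_{\omega,\delta}(u,v)=\langle f,v\rangle_{H^{-1}(D),H^1(D)}+\sum_{i=1}^N c_i\int_{D_i}\overline v\,\d x\qquad\text{for all }v\in H^1(D).
\end{equation*}
Using the definitions \eqref{Lax_Milgram_solu} of $u_f$ and \eqref{eqn:9d5p8} of $u_j$, the right-hand side is $a_{\omega,\delta}\big(u_f+\sum_{i}c_i u_i,v\big)$; since $a_{\omega,\delta}$ is coercive for $(\omega,\delta)$ in a neighbourhood of $0$ (recalled above, following \cite{FA_JMPA2024}), Lax--Milgram uniqueness then forces $u=u_f+\sum_i c_iu_i$, which is exactly \eqref{eq656} with $x_i=c_i$.

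Next I would close the loop on the coefficients. Integrating the representation of $u$ over $D_j$ and invoking \eqref{eqn:hnkcq}--\eqref{right_linearsystem} yields $c_j=F_j+\sum_i\bm U_{ji}c_i$, that is, $(\bm I-\bm U(\omega,\delta))\bm c=\bm F$ with $\bm c=(c_i)_{1\le i\le N}$. This establishes the ``only if'' direction together with the reconstruction formula \eqref{eq656}. For the ``if'' direction I would start from a solution $\bm x$ of \eqref{eq654}, \emph{define} $u:=u_f+\sum_j x_ju_j$, check from the $i$-th row of \eqref{eq654} that $\int_{D_i}u\,\d x=x_i$, and then run the first identity backwards to verify $a(u,v)=\langle f,v\rangle_{H^{-1}(D),H^1(D)}$ for every $v\in H^1(D)$. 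The conclusion is that the assignments $u\mapsto(\int_{D_i}u\,\d x)_{1\le i\le N}$ and $\bm x\mapsto u_f+\sum_jx_ju_j$ are mutually inverse bijections between the solution set of \eqref{eq:varpro} and that of \eqref{eq654}, so that existence and uniqueness transfer in both directions.

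No step is deep here: this is a finite-rank (Fredholm/Schur) reduction. The two places that require genuine care are (i) the fact that it is $a_{\omega,\delta}$ --- not $a$ --- that is coercive, so all uniqueness must be extracted from Lax--Milgram applied to the \emph{perturbed} form, coercivity of $a$ itself being precisely what is allowed to fail at a resonance; and (ii) the index bookkeeping $\bm U_{ij}=\int_{D_i}u_j\,\d x$ versus $\int_{D_j}u_i\,\d x=\bm U_{ji}$, which must be tracked so that the self-consistency relation comes out as $(\bm I-\bm U)\bm x=\bm F$ rather than its transpose. I would also note that the antilinearity of the forms in their second argument is harmless, since only linearity in the first argument is used to split $a_{\omega,\delta}\big(u_f+\sum_i c_iu_i,v\big)$.
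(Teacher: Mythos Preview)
Your proposal is correct and follows essentially the same route as the paper: rewrite $a(u,v)$ via \eqref{NBF} as $a_{\omega,\delta}(u,v)-\sum_i(\int_{D_i}u)(\int_{D_i}\overline v)$, use the coercivity of $a_{\omega,\delta}$ and the defining relations \eqref{Lax_Milgram_solu}, \eqref{eqn:9d5p8} to deduce $u=u_f+\sum_j(\int_{D_j}u)\,u_j$, and integrate over each $D_i$ to obtain \eqref{eq654}. The paper's converse is stated more tersely than yours, but your added care in verifying $\int_{D_i}u=x_i$ and running the identity backwards is exactly what is implicit there.
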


\begin{proof}[\bf Proof]
	It follows from \eqref{NBF} and \eqref{Lax_Milgram_solu} that
	the variational problem \eqref{eq:varpro}  is equivalent to
	\[
	a_{\omega,\delta}(u,v)-\sum_{i=1}^N \(a_{\omega,\delta}(u_i,v)\int_{D_i}u~\d x \)=a_{\omega,\delta}(u_f(\omega ,\delta ),v),
	\]
	which implies that
	\begin{equation}\label{eqn:q48jq}
		u-\sum_{j=1}^N \(u_j\int_{D_j}u~\d x \)=u_f(\omega ,\delta ).
	\end{equation}
	By integrating both sides of \eqref{eqn:q48jq} on $D_i$, we see that the vector $\bm{x}:=\(\int_{D_i}u~\d x\)_{1\leq i \leq N}$ solves the linear system
	\[
	\int_{D_i}u ~\d x -\sum_{j=1}^N \(\int_{D_i} u_j ~\d x \int_{D_j}u~\d x  \)= \int_{D_i} u_f(\omega ,\delta )~\d x,\; i=1,2,\ldots,N,
	\]
	which is exactly \eqref{eq654}. Conversely, if \eqref{eq654} has a solution, then \eqref{eqn:q48jq} implies that the solution to \eqref{eq:varpro} is given by \eqref{eq656}.
\end{proof}

Therefore, we reduce the $2N\times 2N$ system \eqref{dtnls} to the smaller $N\times N$ linear system \eqref{eq654}, and the third characterization of resonances is given in the following.

\begin{prop}\label{TCR}
	Subwavelength resonant frequencies $\omega=\omega(\delta)\in\mathbb{C}$ are determined by $\det(\bm{I} - \bm{U}(\omega,\delta))=0$.
\end{prop}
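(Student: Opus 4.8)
The plan is to read off Proposition~\ref{TCR} from the equivalences already established, taking Proposition~\ref{SCR} as the starting point. Since $u^{in}=0$ in the definition of a resonance, the first observation is that the linear functional \eqref{rightlinear} vanishes, so $f=0$, the Lax--Milgram solution $u_f(\omega,\delta)$ of \eqref{Lax_Milgram_solu} is identically zero, and consequently $\bm F=0$ in \eqref{right_linearsystem}. It then suffices to trace the chain: $\det(\mathbb A(\omega,\delta))=0$ holds if and only if \eqref{D2N} with $u^{in}=0$ admits a nontrivial solution $u$ on $D$, if and only if the weak problem \eqref{eq:varpro} with $f=0$ admits a nontrivial solution, if and only if the homogeneous $N\times N$ system \eqref{eq654} (with $\bm F=0$) admits a nontrivial solution $\bm x$, if and only if $\det(\bm I-\bm U(\omega,\delta))=0$. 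Here the first link is Proposition~\ref{SCR} together with its underlying reduction of \eqref{D2N} to the system \eqref{dtnls}, the second is the variational reformulation recalled just before \eqref{eq:varpro}, and the third is Lemma~\ref{lem32}; the last is pure linear algebra.

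The one thing that needs care is that nontriviality is transported faithfully along each link. For the last link I would invoke Lemma~\ref{lem32} with $f=0$: by \eqref{eqn:q48jq} (with $u_f=0$) every solution $u$ of \eqref{eq:varpro} equals $\sum_{j=1}^N x_j u_j$ with $\bm x=\left(\int_{D_i}u\,\d x\right)_{1\le i\le N}\in\ker(\bm I-\bm U(\omega,\delta))$, so $u\equiv0$ exactly when $\bm x=0$; conversely, for $\bm x\in\ker(\bm I-\bm U(\omega,\delta))$ the field $u:=\sum_{j=1}^N x_j u_j$ solves \eqref{eq:varpro} with $f=0$ and, by the kernel relation, satisfies $\int_{D_i}u\,\d x=\sum_j\bm U_{ij}(\omega,\delta)x_j=x_i$, so again $u\neq0$ iff $\bm x\neq0$. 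For the middle link, a weak solution of \eqref{eq:varpro} with $f=0$ is precisely a solution of \eqref{D2N} with $u^{in}=0$, since the bilinear form $a(\cdot,\cdot)$ is obtained from the strong form \eqref{D2N} by integration by parts on each $D_i$ and insertion of the DtN boundary terms; no nontriviality is lost. Finally, such a nontrivial $u$ on $D$ extends to $\RR^3\setminus D$ by the exterior function $v_f$ of Lemma~\ref{def:DTN} with Dirichlet data $f=(u|_{\Gamma_j^\pm})_{1\le j\le N}$, and the transmission conditions encoded in the definition of $\mathcal T^{k}$ make this global field a nontrivial solution of \eqref{main_equation} with $u^{in}=0$, i.e.\ a resonant mode in the sense of Definition~\ref{defn:resonance}. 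Running the chain in both directions yields the assertion.

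I do not expect a genuine analytic obstacle: once Lemma~\ref{lem32} and the variational reformulation are in hand, everything above is bookkeeping. The point that carries real content is not specific to this proposition, namely the reduction to the radial sector underlying all of Section~\ref{section3}: the representation $u|_{D_i}=a_i j_0(k_\rmr r)+b_i h_0^{(1)}(k_\rmr r)$ and the scalar DtN map of Proposition~\ref{prop:DTN} are exhaustive only because subwavelength resonant modes are monopolar, whose rigorous justification is the monopole approximation of Section~\ref{MSmonopolar} (see also \cite{DKLLZ_MLHC}); granting it, Proposition~\ref{TCR} rests on the same footing as Propositions~\ref{FCR} and~\ref{SCR}. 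A minor technical remark worth recording is that $\bm U(0,0)=\bm I$: one checks that $u_j(0,0)$ is the constant $|D_j|^{-1}$ on $D_j$ and zero on the other components, and the $D_i$ are pairwise disjoint. Hence $\bm I-\bm U(\omega,\delta)$ vanishes at the origin with an $N$-dimensional kernel, which is exactly why $\det(\bm I-\bm U(\omega,\delta))=0$ produces the $N$ branches $\omega^+_j(\delta)\to0$ of Theorem~\ref{thmres}, and it is the leading-order expansion of $\bm I-\bm U(\omega,\delta)$ about the origin that will yield the generalized capacitance matrix in the next subsection.
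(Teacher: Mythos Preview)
Your proposal is correct and follows the same line as the paper, which in fact provides no separate proof of Proposition~\ref{TCR}: it is stated as an immediate consequence of Lemma~\ref{lem32}, the point being that the variational problem \eqref{eq:varpro} fails to be uniquely solvable precisely when the $N\times N$ system \eqref{eq654} does, i.e.\ when $\det(\bm I-\bm U(\omega,\delta))=0$. Your careful bookkeeping of nontriviality along the chain (and the observation $\bm U(0,0)=\bm I$) is a welcome elaboration of what the paper leaves implicit.
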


\subsection{Asymptotic expansions of the subwavelength resonances}

In this subsection, we give the exact formulas of $2N$ eigenfrequencies of any finite $N$-layer nested resonators.  Specifically, we compute their leading order and higher order asymptotic expansions. The solutions $u_j(\omega ,\delta )$, $j=1,2,\ldots,N$,  to variational problem  \eqref{eqn:9d5p8} will be determined for sufficiently small $\delta$ and $\omega$. Furthermore, based on Proposition \ref{TCR}, the eigenfrequencies will be derived. To this end, we consider the strong form of the variational problem \eqref{eqn:9d5p8}
\begin{equation}\label{SFVP}
\begin{cases}
\ds -\Delta u_j - \frac{\omega^2}{v_\rmr^2} u_j + \sum_{i=1}^{N}
\left(\int_{D_i} u_j ~\d x\right) \chi_{D_i} =
\chi_{D_j} &  \text{ in } D,\\
\nm
\ds \ddp{u_j}{\nu}|_\pm = \delta \mathcal{T}^k[u_j]_i^\mp  & \text{on }\Gamma^\mp_i,\; i =1,2,\ldots,N.
\end{cases}
\end{equation}

\begin{prop}\label{prop33}
	The solutions $u_j(\omega, \delta)$, $j=1,2,\ldots,N$,  to the variational problem \eqref{eqn:9d5p8} have the following asymptotic behaviour as $\omega,\delta \to 0:$
	\begin{equation}\label{asy_ujwd}
	\begin{aligned}
	u_j(\omega,\delta) &= \(\frac{1}{ |D_j|}+\frac{\omega^2}{v_\rmr^{2} |D_j|^{2}}\)\chi_{D_j}\\
	&\quad +\delta\left[\frac{4\pi r_{j-1}^{-} r_{j}^+}{|D_{j-1}|^2|D_j| (r_{j-1}^--r_{j}^+)}\chi_{D_{j-1}}-\(\frac{4\pi r_j^+ r_{j-1}^-}{|D_j|^3 (r_{j-1}^--r_{j}^+)}  +\frac{4\pi r_j^- r_{j+1}^+}{|D_j|^3 (r_j^--r_{j+1}^+)}\)\chi_{D_{j}}\right.\\
	&\quad \left.\quad\quad+\frac{4\pi r_{j+1}^+ r_{j}^-}{|D_{j+1}|^2 |D_j| (r_{j}^--r_{j+1}^+)} \chi_{D_{j+1}} + \widetilde{u}_{j,0,1}\right]\\
	&\quad +\omega\delta\(\frac{4\pi (r_j^+)^2\i}{|D_j|^{3}v
	}\delta_{1,j}\chi_{D_j}+\widetilde{u}_{j,1,1}\) +O((\omega^2+\delta)^2),
	\end{aligned}
	\end{equation}
	where $|D_j| = \frac{4\pi}{3}\((r_j^+)^3-(r_j^-)^3\)$, the functions $\widetilde{u}_{j,0,1}$ and $\widetilde{u}_{j,1,1}$  satisfy
	\[
	\int_{D_i}\widetilde{u}_{j,0,1}\d x=0,\quad  \int_{D_i}\widetilde{u}_{j,1,1}\d x=0,\;
	i = 1,2,\ldots, N.
	\]

\end{prop}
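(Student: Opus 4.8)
The plan is to pass to the strong form \eqref{SFVP} and expand $u_j(\omega,\delta)$ into a convergent double power series $u_j(\omega,\delta)=\sum_{p,q\ge0}\omega^{p}\delta^{q}u_{j,p,q}$ in the small parameters $\omega$ and $\delta$, then read off the first few coefficients. That such an expansion exists is already established: the Lax--Milgram solution of \eqref{eqn:9d5p8} is jointly analytic in $(\omega,\delta)$ near the origin, since $a_{\omega,\delta}$ of \eqref{NBF} is an analytic coercive perturbation of $a_{0,0}$ and, by Corollary \ref{coranalytic_DtN}, $\mathcal{T}^{k}$ is analytic in $\omega$ through $k=\omega/v$, with $\mathcal{T}^{k}=\mathcal{T}_{0}+(\omega/v)\mathcal{T}_{1}+O(\omega^{2})$. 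Inserting the ansatz into \eqref{SFVP} and matching coefficients of $\omega^{p}\delta^{q}$ (using that the $\frac{\omega^2}{v_\rmr^2}u_j$ term feeds $v_\rmr^{-2}u_{j,p-2,q}$ into order $(p,q)$, and that the boundary data at order $(p,q)$ is $\sum_{n=0}^{p}v^{-n}\mathcal{T}_n[\mathrm{tr}\,u_{j,p-n,q-1}]$) produces, at each order, a problem of the shape $-\Delta w+\sum_i\big(\int_{D_i}w\big)\chi_{D_i}=(\text{known})$ in $D$ with $\partial_\nu w=(\text{known})$ on $\partial D$. Since $D=\cup_i D_i$ is disconnected, each such problem splits into $N$ scalar Neumann problems on the spherical shells $D_i$; the additional rank-one term from \eqref{NBF} enters the $i$-th shell problem as an unknown additive constant $\int_{D_i}w$, which is pinned down by the solvability condition that the prescribed source balance the net flux $4\pi(r_i^{+})^{2}\,(\text{data on }\Gamma_i^{+})-4\pi(r_i^{-})^{2}\,(\text{data on }\Gamma_i^{-})$; the remaining part on each $D_i$ is radial and mean-zero.

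The orders with $\delta=0$ are immediate. At order $(0,0)$ the boundary data vanishes, the solvability conditions give $\int_{D_i}u_{j,0,0}=\delta_{ij}$, hence $u_{j,0,0}$ is constant on each $D_i$ and $u_{j,0,0}=|D_j|^{-1}\chi_{D_j}$. At order $(1,0)$ the problem is homogeneous, so $u_{j,1,0}=0$ (and likewise all pure odd-$\omega$ terms vanish). At order $(2,0)$ the term $-v_\rmr^{-2}u_{j,0,0}$ enters the source, the solvability conditions give $\int_{D_i}u_{j,2,0}=\delta_{ij}/(v_\rmr^{2}|D_j|)$, and again $u_{j,2,0}$ is constant on each $D_i$, so $u_{j,2,0}=\big(v_\rmr^{2}|D_j|^{2}\big)^{-1}\chi_{D_j}$. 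This yields the first line and the $\omega^{2}$ contribution in \eqref{asy_ujwd}.

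The heart of the proposition is the $\delta$- and $\omega\delta$-orders, where the boundary coupling through $\mathcal{T}^{k}$ first shows up. At order $(0,1)$ the boundary data is $\mathcal{T}_{0}\big[\,\mathrm{tr}\,u_{j,0,0}\,\big]$; since $\mathrm{tr}\,u_{j,0,0}$ has exactly the two nonzero entries $f_j^{+}=f_j^{-}=|D_j|^{-1}$, the explicit tridiagonal formulas \eqref{T_0expre} show this data is nonzero only on the spheres $\Gamma_{j-1}^{-},\Gamma_j^{+},\Gamma_j^{-},\Gamma_{j+1}^{+}$; substituting these values into the solvability conditions on $D_{j-1},D_j,D_{j+1}$ produces precisely the coefficients of $\chi_{D_{j-1}},\chi_{D_j},\chi_{D_{j+1}}$ displayed in \eqref{asy_ujwd}, while $\widetilde u_{j,0,1}$ is the sum of the radial mean-zero remainders on these three shells ($u_{j,0,1}$ vanishing on all other $D_i$). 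At order $(1,1)$ the data is $v^{-1}\mathcal{T}_{1}\big[\,\mathrm{tr}\,u_{j,0,0}\,\big]+\mathcal{T}_{0}\big[\,\mathrm{tr}\,u_{j,1,0}\,\big]$; the second term vanishes since $u_{j,1,0}=0$, and by \eqref{T_0} the first equals $\i\,v^{-1}|D_j|^{-1}\delta_{1,j}$ on $\Gamma_1^{+}$ and zero elsewhere, so only $D_1$ (and only for $j=1$) acquires the average $4\pi(r_1^{+})^{2}\i\,\delta_{1,j}/(v|D_1|^{2})$, which divided by $|D_1|$ gives the $\omega\delta$-coefficient $4\pi(r_j^{+})^{2}\i\,\delta_{1,j}/(|D_j|^{3}v)$; the leftover radial mean-zero piece is $\widetilde u_{j,1,1}$.

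Finally, every term not listed, i.e.\ $\omega^{p}\delta^{q}u_{j,p,q}$ with $(p,q)\notin\{(0,0),(1,0),(2,0),(0,1),(1,1)\}$, carries at least the weight of $\omega^{4}$, $\omega^{2}\delta$ or $\delta^{2}$, so, the series converging on a fixed polydisc about the origin, its tail is $O((\omega^{2}+\delta)^{2})$ in $H^{1}(D)$ (one may alternatively obtain this from an a priori estimate for the Lax--Milgram solution of \eqref{Lax_Milgram_solu}). I expect the main obstacle to be the bookkeeping in the $(0,1)$ and $(1,1)$ steps: one must orient $\nu$ on each $\Gamma_i^{\pm}$ consistently with \eqref{SFVP} so that the $+/-$ sign pattern in $a(u,v)$ matches the flux balance, feed exactly the right entries of $\mathcal{T}_{0}$ from \eqref{T_0expre} into the solvability conditions so that the tridiagonal coupling of $u_j$ to the neighbours $D_{j\pm1}$ comes out with the stated coefficients, and handle the end shells $D_1$, $D_N$ separately---where $\mathcal{T}_{0}[f]_1^{+}=-f_1^{+}/r_1^{+}$ and $\mathcal{T}_{0}[f]_N^{-}=0$ replace the generic off-diagonal formula---this last point being precisely what produces the Kronecker factor $\delta_{1,j}$ in the $\omega\delta$-term.
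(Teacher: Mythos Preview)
Your proposal is correct and follows essentially the same route as the paper: expand $u_j(\omega,\delta)$ as a double power series via the strong form \eqref{SFVP}, solve the resulting hierarchy by matching powers, and at each order determine the shell averages $\int_{D_i}u_{j,p,q}$ by what you call the flux/solvability condition (the paper phrases this as ``multiplying by $\chi_{D_i}$ and integrating by parts''), feeding in the explicit entries of $\mathcal{T}_0$ and $\mathcal{T}_1$ from \eqref{T_0expre} and \eqref{T_0}. The only cosmetic difference is that the paper records the full inductive formula $u_{j,2m,0}=\chi_{D_j}/(v_\rmr^{2m}|D_j|^{m+1})$, $u_{j,2m+1,0}=0$ for all $m\ge0$, whereas you compute just the first three $\delta=0$ orders; either suffices for the stated remainder.
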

\begin{proof}[\bf Proof]
It follows from \eqref{coranalytic_DtN} that  there exist functions $(u_{j,m,n})_{m{\geq}0,	n{\geq}0}$ such that $u_j(\omega,\delta)$ has the following convergent series in $H^1(D)$:
\begin{equation}\label{ujcs}
u_j(\omega,\delta) = \sum_{m,n=0}^{+\infty} \omega^{m}\delta^n
u_{j,m,n}.
\end{equation}
Substituting \eqref{ujcs} into \eqref{SFVP} and identifying powers of $\omega$ and
$\delta$, we obtain the following system of equations:
\begin{equation}\label{ujmn}
\begin{cases}
\ds -\Delta u_{j,m,n}  + \sum_{i=1}^{N}
\left(\int_{D_i} u_{j,m,n} ~\d x\right) \chi_{D_i} =  \frac{1}{v_\rmr^2} u_{j,m-2,n}+
\chi_{D_j} \delta_{m,0}\delta_{n,0}&  \text{ in } D,\\
\nm
\ds \ddp{u_{j,m,n}}{\nu}|_\pm =  \sum_{p=0}^{m}\frac{1}{v^p}\mathcal{T}_p[u_{j,m-p,n-1}]_i^\mp  & \text{on }\Gamma^\mp_i,\; i =1,2,\ldots,N,
\end{cases}
\end{equation}
where $u_{j,m,n}=0$ for negative indices $m$ and $n$.
It can then be proved inductively that
\begin{equation}\label{ujevenodd0}
u_{j,2m,0} = \frac{\chi_{D_j}}{v_\rmr^{2m} |D_j|^{m+1}}\text{ and }u_{j,2m+1,0} = 0 \text{ for	any } m \geq 0.
\end{equation}

For $m=0$, $n=1$, we obtain the following system of equations
\begin{equation}\label{uj01}
\begin{cases}
\ds -\Delta u_{j,0,1}  + \sum_{i=1}^{N}
\left(\int_{D_i} u_{j,0,1} ~\d x\right) \chi_{D_i} =  0&  \text{ in } D,\\
\nm
\ds \ddp{u_{j,0,1}}{\nu}|_\pm =  \mathcal{T}_0[u_{j,0,0}]_i^\mp & \text{on }\Gamma^\mp_i,\; i =1,2,\ldots,N.
\end{cases}
\end{equation}
From \eqref{T_0expre} with $f_i^\pm = u_{j,0,0}|_{\Gamma_i^\pm}=\frac{\delta_{i,j}}{|D_j|}$, we obtain
\begin{equation}
\begin{cases}
\ds\mathcal{T}_0[u_{j,0,0}]_1^{+} = -\frac{1}{r_1^+}\frac{\delta_{1,j}}{|D_j|}, &  \\
\nm
\ds   \mathcal{T}_0[u_{j,0,0}]_i^{+} = \frac{r_{i-1}^-}{|D_j|r_{i}^+ (r_{i-1}^--r_{i}^+)}  \({\delta_{i-1,j}}-{\delta_{i,j}}\), &i =2,3,\ldots,N, \\
\nm
\ds   \mathcal{T}_0[u_{j,0,0}]_i^{-} = \frac{r_{i+1}^+}{|D_j|r_i^- (r_i^--r_{i+1}^+)} \({\delta_{i,j}}-{\delta_{i+1,j}}\), & i =1,2,\ldots,N-1,  \\
\nm
\ds  \mathcal{T}_0[u_{j,0,0}]_N^{-} = 0.&
\end{cases}
\end{equation}
Multiplying \eqref{uj01} by $\chi_{D_i}$ and integrating by parts, we have that
\begin{equation}
\begin{aligned}
\int_{D_i} u_{j,0,1}~\d x &= \frac{1}{|D_i|}\(\int_{\Gamma_i^+} \mathcal{T}_0[u_{j,0,0}]_i^{+}~\d \sigma - \int_{\Gamma_i^-} \mathcal{T}_0[u_{j,0,0}]_i^{-}~\d \sigma\)\\
& = \frac{1}{|D_i|}\(-\frac{4\pi r_1^+\delta_{1,j}}{|D_j|} \chi_{\{ i=1\}} + \frac{4\pi r_i^{+}r_{i-1}^-}{|D_j| (r_{i-1}^--r_{i}^+)}  \({\delta_{i-1,j}}-{\delta_{i,j}}\)\chi_{\{2\leq i\leq N\}}\right.\\
&\left.\quad \quad\quad\;
-\frac{4\pi r_i^{-} r_{i+1}^+}{|D_j| (r_i^--r_{i+1}^+)} \({\delta_{i,j}}-{\delta_{i+1,j}}\)\chi_{\{1\leq i\leq N-1\}}\)\\
& =
\begin{cases}
\ds \frac{4\pi r_{j-1}^{-}r_{j}^+}{|D_{j-1}||D_j| (r_{j-1}^--r_{j}^+)} &\text{ if } i=j-1,\\
\nm
\ds -\frac{4\pi r_j^{+} r_{j-1}^-}{|D_j|^2 (r_{j-1}^--r_{j}^+)}  -\frac{4\pi r_j^{-}r_{j+1}^+}{|D_j|^2 (r_j^--r_{j+1}^+)}  &\text{ if }  i=j,  \\
\nm
\ds \frac{4\pi r_{j+1}^{+}r_{j}^-}{|D_{j+1}||D_j| (r_{j}^--r_{j+1}^+)} &\text{ if } i=j+1,
\end{cases}
\end{aligned}
\end{equation}
where $r_{N+1}^+ = 0$ and $r_0^- \to  +\infty$. It means that
\[
-\frac{4\pi r_j^+ r_{j-1}^-}{|D_j|^2 (r_{j-1}^--r_{j}^+)}  -\frac{4\pi r_j^- r_{j+1}^+}{|D_j|^2 (r_j^--r_{j+1}^+)} =
\begin{cases}
\ds -\frac{4\pi r_1^+}{|D_1|^2 } -\frac{4\pi r_1^- r_{2}^+}{|D_1|^2 (r_1^--r_{2}^+)}&\text{ if } i=j = 1,\\
\nm
\ds -\frac{4\pi r_N^+ r_{N-1}^-}{|D_N|^2  (r_{N-1}^--r_{N}^+)}   &\text{ if } i=j = N.\\
\end{cases}
\]
It follows that $u_{j,0,1}$ is given by
\[
\begin{aligned}
u_{j,0,1} &=  \frac{4\pi r_{j-1}^- r_{j}^+}{|D_{j-1}|^2|D_j| (r_{j-1}^--r_{j}^+)}\chi_{D_{j-1}}-\(\frac{4\pi r_j^+ r_{j-1}^-}{|D_j|^3 (r_{j-1}^--r_{j}^+)}  +\frac{4\pi r_j^- r_{j+1}^+}{|D_j|^3  (r_j^--r_{j+1}^+)}\)\chi_{D_{j}}\\
&\quad +\frac{4\pi r_{j+1}^+ r_{j}^-}{|D_{j+1}|^2 |D_j| (r_{j}^--r_{j+1}^+)} \chi_{D_{j+1}} + \widetilde{u}_{j,0,1},
\end{aligned}
\]
where $\widetilde{u}_{j,0,1}$  satisfies $\int_{D_i}\widetilde{u}_{j,0,1}\d x=0$ for $i = 1,2,\ldots, N$.

For $m=n=1$, it folllows from \eqref{ujmn} and \eqref{ujevenodd0} that $u_{j,1,1}$ satisfies
\begin{equation}\label{uj11}
\begin{cases}
\ds -\Delta u_{j,1,1}  + \sum_{i=1}^{N}
\left(\int_{D_i} u_{j,1,1} ~\d x\right) \chi_{D_i} =  0&  \text{ in } D,\\
\nm
\ds \ddp{u_{j,1,1}}{\nu}|_\pm =  \frac{1}{v}\mathcal{T}_1[u_{j,0,0}]_i^\mp  & \text{on }\Gamma^\mp_i,\; i =1,2,\ldots,N.
\end{cases}
\end{equation}
By \eqref{T_0}, we have that
\[
\mathcal{T}_1[u_{j,0,0}]_i^{+} = \begin{cases}
\i \frac{\delta_{1,j}}{|D_j|}&\text{ if }i=1,\\
0&\text{ if }i{\geq}2,
\end{cases}
\text{ and } \mathcal{T}_1[u_{j,0,0}]_i^{-} = 0 \text{  for } i=1,2,\ldots,N.
\]
Therefore, multiplying \eqref{uj11} by $\chi_{D_i}$ and integrating by parts, we have that
\[
    \int_{D_i} u_{j,1,1}\d x=\i\frac{4\pi (r_i^+)^2\delta_{1,j}\delta_{1,i}}{|D_i| |D_j|  v},
\]
which implies  that
$u_{j,1,1}=\frac{ 4\pi (r_j^+)^2\i}{|D_j|^{3}v
}\delta_{1,j}\chi_{D_j}+\widetilde{u}_{j,1,1}$,
where $\widetilde{u}_{j,1,1}$  satisfies $\int_{D_i}
\widetilde{u}_{j,1,1}\d x=0$ for any $1\leq i\leq N$. The proof is complete.
\end{proof}

Next, we define the  capacitance matrix similar to the three-dimensional separated structure case \cite{FA_SAM_2022,AD_SIIS2023}.
\begin{defn}\label{def: Capacitance matrix}
	Consider the solutions $V_j : \RR^3\to \RR$ of the problem
	\begin{equation}\label{V_i}
	\begin{cases}
	-\Delta V_j(x) =0, & x\in \RR^3 \setminus D, \\
	V_j(x)=\delta_{ij}, & x\in D_i ,\\
	V_j(x) = O(|x|^{-1}), & \text{ as }|x|\to\infty;
	\end{cases}
	\end{equation}
	{for $1\leq i,j\leq N$}. Then the capacitance matrix is defined coefficient-wise by
	\begin{equation}\label{cap_matrix}
	\mathcal{C}_{ij} = -\(\int_{\Gamma_i^+}\frac{\partial V_j}{\partial \nu}\d \sigma- \int_{\Gamma_i^-}\frac{\partial V_j}{\partial \nu}\d \sigma\).
	\end{equation}
\end{defn}

\begin{lem}
	The capacitance matrix for a structure of $N$-layer nested resonators in $\RR^3$ is given by
	\[
	\ds \mathcal{C}_{ij}: = -4\pi \frac{r_{j-1}^{-} r_{j}^{+}}{r_{j-1}^{-}-r_{j}^{+}}\delta_{i,j-1}
	+4\pi\(\frac{r_{j-1}^{-} r_{j}^{+}}{r_{j-1}^{-}- r_{j}^{+}} + \frac{r_{j}^{-} r_{j+1}^{+}}{r_{j}^{-}- r_{j+1}^{+}}\)\delta_{i,j}
	-4\pi \frac{r_{j}^{-} r_{j+1}^{+}}{r_{j}^{-}-r_{j+1}^{+}}\delta_{i,j+1},
	\]
	where $r_{N+1}^+ = 0$ and $r_0^- \to +\infty$. That is
	\begin{equation}\label{capmatrix}
	\small{
		\begin{split}
		&\mathcal{C}:=4\pi \times \\
		&\begin{pmatrix}
		\ds r_{1}^{+} + \frac{r_{1}^{-} r_{2}^{+}}{r_{1}^{-}- r_{2}^{+}} &  \ds - \frac{r_{1}^{-} r_{2}^{+}}{r_{1}^{-}-r_{2}^{+}} & &  & & \\
		\nm
		\ds -\frac{r_{1}^{-} r_{2}^{+}}{r_{1}^{-}-r_{2}^{+}} & \ds \frac{r_{1}^{-} r_{2}^{+}}{r_{1}^{-}- r_{2}^{+}} + \frac{r_{2}^{-} r_{3}^{+}}{r_{2}^{-}- r_{3}^{+}} & \ds - \frac{r_{2}^{-} r_{3}^{+}}{r_{2}^{-}-r_{3}^{+}}&  & &\\
		\nm
		& \ds-\frac{r_{2}^{-} r_{3}^{+}}{r_{2}^{-}- r_{3}^{+}} & \ds \frac{r_{2}^{-} r_{3}^{+}}{r_{2}^{-}- r_{3}^{+}} + \frac{r_{3}^{-} r_{4}^{+}}{r_{3}^{-}- r_{4}^{+}} & \ds-\frac{r_{3}^{-} r_{4}^{+}}{r_{3}^{-}- r_{4}^{+}} & & \\
		\nm
		& & \ddots &\ddots & \ddots& \\
		\nm
		&  & &\ds-\frac{r_{N-2}^{-} r_{N-1}^{+}}{r_{N-2}^{-}- r_{N-1}^{+}} &\ds \frac{r_{N-2}^{-} r_{N-1}^{+}}{r_{N-2}^{-}- r_{N-1}^{+}} + \frac{r_{N-1}^{-} r_{N}^{+}}{r_{N-1}^{-}- r_{N}^{+}} &\ds -\frac{r_{N-1}^{-} r_{N}^{+}}{r_{N-1}^{-}- r_{N}^{+}}\\
		\nm
		&  & &  &\ds-\frac{r_{N-1}^{-} r_{N}^{+}}{r_{N-1}^{-}- r_{N}^{+}} &\ds \frac{r_{N-1}^{-} r_{N}^{+}}{r_{N-1}^{-}- r_{N}^{+}}
		\end{pmatrix}.
		\end{split}}
	\end{equation}
\end{lem}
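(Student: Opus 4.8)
\emph{Proof strategy.} The plan is to solve \eqref{V_i} explicitly using the concentric‑sphere geometry and then evaluate the fluxes in \eqref{cap_matrix}. Since $\RR^3\setminus D$, the Laplace operator, and the boundary data (the constant $\delta_{i,j}$ on each sphere $\Gamma_i^\pm$) are all invariant under rotations about the common centre, uniqueness of the exterior Dirichlet problem (Lemma~\ref{def:DTN} with $k=0$, see also \cite[Proposition~3.1]{GBF_book1995}) forces $V_j$ to be radial. Hence on each connected component of $\RR^3\setminus D$ it is a radial harmonic function $\alpha+\beta/r$; the decay condition gives $V_j=\beta_0/r$ on the unbounded component $D_0'$, regularity at the origin gives $V_j\equiv\mathrm{const}$ on the innermost ball $D_N'$, and on each intermediate shell $D_l'$ ($1\le l\le N-1$) the pair $(\alpha_l,\beta_l)$ is fixed by the matching conditions $V_j|_{\Gamma_l^-}=\delta_{l,j}$ and $V_j|_{\Gamma_{l+1}^+}=\delta_{l+1,j}$, i.e.\ by the $2\times2$ system $\alpha_l+\beta_l/r_l^-=\delta_{l,j}$, $\alpha_l+\beta_l/r_{l+1}^+=\delta_{l+1,j}$; solving it yields $\beta_0=r_1^+\delta_{1,j}$ and $\beta_l=-\dfrac{r_l^-\,r_{l+1}^+\,(\delta_{l,j}-\delta_{l+1,j})}{r_l^--r_{l+1}^+}$.

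Next I would evaluate \eqref{cap_matrix}. On $\Gamma_i^\pm=\{|x|=r_i^\pm\}$ the outward normal is radial and $\partial_\nu(\alpha+\beta/r)=-\beta/r^2$ is constant there; since $V_j$ is locally constant inside $D$, the normal derivative in \eqref{cap_matrix} is the one‑sided trace from the fluid side $\RR^3\setminus D$, so these formulas apply. Multiplying by the surface area $4\pi(r_i^\pm)^2$, which cancels the $(r_i^\pm)^2$ in the denominator, gives $\int_{\Gamma_i^+}\partial_\nu V_j\,\d\sigma=-4\pi\beta_{i-1}$ (with $\beta_0$ as above when $i=1$), $\int_{\Gamma_i^-}\partial_\nu V_j\,\d\sigma=-4\pi\beta_i$ for $i\le N-1$, and $\int_{\Gamma_N^-}\partial_\nu V_j\,\d\sigma=0$. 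Substituting into \eqref{cap_matrix} yields $\mathcal{C}_{1j}=4\pi(\beta_0-\beta_1)$, $\mathcal{C}_{ij}=4\pi(\beta_{i-1}-\beta_i)$ for $2\le i\le N-1$, and $\mathcal{C}_{Nj}=4\pi\beta_{N-1}$; inserting the explicit $\beta_l$ and grouping the Kronecker deltas produces the stated tridiagonal expression, with the two edge rows $i=1$ and $i=N$ folded into the general formula through the conventions $r_0^-\to+\infty$ and $r_{N+1}^+=0$. Assembling the entries gives the matrix \eqref{capmatrix}.

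A slicker equivalent route is to note that $V_j$ restricted to $\RR^3\setminus D$ is exactly the solution $v_f$ of \eqref{exterior_problem} with $k=0$ and data $f_l^\pm=\delta_{l,j}$, so that $\partial_\nu V_j|_{\Gamma_i^\pm}$ equals the $k^0$‑coefficient of the DtN map applied to this data, already recorded in \eqref{T_0expre}; multiplying those constants by $4\pi(r_i^\pm)^2$ and inserting into \eqref{cap_matrix} reproduces the same arithmetic (indeed essentially the computation already carried out in the proof of Proposition~\ref{prop33}). Either way there is no real analytical difficulty: the only care points are the appeal to uniqueness for \eqref{V_i}, the observation that the relevant normal derivative is the exterior trace rather than the (vanishing) interior one, and keeping the orientation of $\nu$ on $\Gamma_i^+$ versus $\Gamma_i^-$ consistent with \eqref{cap_matrix} and with the DtN conventions — after that it is the routine bookkeeping of a tridiagonal pattern.
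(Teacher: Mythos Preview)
Your proposal is correct and follows essentially the same route as the paper: write down the explicit radial harmonic solution $V_j$ on each component of $\RR^3\setminus D$, then integrate the fluid-side normal derivative over $\Gamma_i^\pm$ to read off $\mathcal{C}_{ij}$. Your organisation via the coefficients $\beta_l$ and the alternative remark that this is just $\mathcal{T}_0$ applied to $f_l^\pm=\delta_{l,j}$ are cosmetic repackagings of the paper's computation, not a different argument.
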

\begin{proof}[\bf Proof]
	The solutions $V_j$, $j=1,2,\ldots,N$, to \eqref{V_i}  are given by
\begin{equation}\label{V_i_solu}
V_j(x) =
\begin{cases}
\ds -\frac{r_j^{-} r_{j+1}^{+}}{r_j^{-}-r_{j+1}^{+}} \frac{1}{|x|} + \frac{r_j^{-} }{r_j^{-}-r_{j+1}^{+}}, &r_{j+1}^{+}\leq |x|\leq r_j^{-},\\
1, & r_{j}^{-}\leq |x|\leq r_j^{+},\\
\ds \frac{r_{j-1}^{-} r_{j}^{+}}{r_{j-1}^{-}-r_{j}^{+}} \frac{1}{|x|} - \frac{r_j^{+} }{r_{j-1}^{-}-r_{j}^{+}}, & r_{j}^{+}\leq |x|\leq r_{j-1}^{-},\\
0, & \text{else},
\end{cases}
\end{equation}
where $r_{N+1}^+ = 0$ and $r_0^- \to  +\infty$.  From definition \eqref{cap_matrix}, we have
\[
\begin{aligned}
\mathcal{C}_{ij} &= \delta_{i,j} \frac{r_j^{-} r_{j+1}^{+}}{r_j^{-}-r_{j+1}^{+}}\int_{\Gamma_i^-}  \frac{1}{|x|^2}~\d \sigma(x) - \delta_{i,j-1}\frac{r_{j-1}^{-} r_{j}^{+}}{r_{j-1}^{-}-r_{j}^{+}} \int_{\Gamma_i^-}  \frac{1}{|x|^2}~\d \sigma(x)\\
&\quad -\(-\delta_{i,j} \frac{r_{j-1}^{-} r_{j}^{+}}{r_{j-1}^{-}-r_{j}^{+}}\int_{\Gamma_i^+}  \frac{1}{|x|^2}~\d \sigma(x) + \delta_{i,j+1}\frac{r_j^{-} r_{j+1}^{+}}{r_j^{-}-r_{j+1}^{+}} \int_{\Gamma_i^+}  \frac{1}{|x|^2}~\d \sigma(x)\)\\
& = -4\pi \frac{r_{j-1}^{-} r_{j}^{+}}{r_{j-1}^{-}-r_{j}^{+}}\delta_{i,j-1}
+4\pi\(\frac{r_{j-1}^{-} r_{j}^{+}}{r_{j-1}^{-}- r_{j}^{+}} + \frac{r_{j}^{-} r_{j+1}^{+}}{r_{j}^{-}- r_{j+1}^{+}}\)\delta_{i,j}
-4\pi \frac{r_{j}^{-} r_{j+1}^{+}}{r_{j}^{-}-r_{j+1}^{+}}\delta_{i,j+1}.
\end{aligned}
\]
The proof is complete.
\end{proof}

In what follows, we introduce the $N \times N$ matrices
\[
\bm{V} = \diag(|D_1|,|D_2|,\ldots,|D_N|) \text{ and } \bm{E}_1 = \diag(1,0,\ldots,0).
\]

\begin{cor}
	The following asymptotic expansion for the matrix $\bm{U}(\omega,\delta)$ defined in \eqref{eqn:hnkcq} holds:
	\begin{equation}\label{bmU}
	\bm{U}(\omega,\delta) = \bm{I}+\frac{\omega^2}{v_\rmr^{2}}\bm{V}^{-1}-\delta \bm{V}^{-1}\mathcal{C} \bm{V}^{-1} +4\pi (r_1^+)^2\frac{\i \omega\delta}{v} \bm{V}^{-1}\bm{E}_1 \bm{V}^{-1}+O((\omega^2+\delta)^2).
	\end{equation}
\end{cor}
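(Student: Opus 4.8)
The plan is to read the expansion off directly from Proposition \ref{prop33}. By the definition \eqref{eqn:hnkcq}, the $(i,j)$ entry of $\bm{U}(\omega,\delta)$ is simply $\int_{D_i}u_j(\omega,\delta)\,\d x$, so I would substitute the asymptotic formula \eqref{asy_ujwd} for $u_j(\omega,\delta)$ into this integral and evaluate each contribution term by term. The key simplification is that, by Proposition \ref{prop33}, $\int_{D_i}\widetilde{u}_{j,0,1}\,\d x=0$ and $\int_{D_i}\widetilde{u}_{j,1,1}\,\d x=0$ for all $i$, so the correction functions $\widetilde{u}_{j,0,1}$ and $\widetilde{u}_{j,1,1}$ drop out of $\bm{U}$ entirely, and the remainder in \eqref{asy_ujwd}, being $O((\omega^2+\delta)^2)$ in $H^1(D)$ and hence (by Cauchy--Schwarz) in $L^1(D_i)$, integrates to an $O((\omega^2+\delta)^2)$ matrix.

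Next I would compute the surviving terms. The $\omega^0\delta^0$ part gives $\int_{D_i}\big(\tfrac{1}{|D_j|}+\tfrac{\omega^2}{v_\rmr^2|D_j|^2}\big)\chi_{D_j}\,\d x=\delta_{ij}\big(1+\tfrac{\omega^2}{v_\rmr^2|D_j|}\big)$, which accounts for $\bm{I}+\tfrac{\omega^2}{v_\rmr^2}\bm{V}^{-1}$. The $\delta$-linear part involves the three characteristic functions $\chi_{D_{j-1}},\chi_{D_j},\chi_{D_{j+1}}$; integrating over $D_i$ produces nonzero entries only for $i\in\{j-1,j,j+1\}$, with values $\tfrac{4\pi r_{j-1}^{-}r_j^{+}}{|D_{j-1}||D_j|(r_{j-1}^{-}-r_j^{+})}$, $-\tfrac{4\pi}{|D_j|^2}\big(\tfrac{r_j^{+}r_{j-1}^{-}}{r_{j-1}^{-}-r_j^{+}}+\tfrac{r_j^{-}r_{j+1}^{+}}{r_j^{-}-r_{j+1}^{+}}\big)$, and $\tfrac{4\pi r_{j+1}^{+}r_j^{-}}{|D_{j+1}||D_j|(r_j^{-}-r_{j+1}^{+})}$, respectively. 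Comparing with \eqref{cap_matrix}--\eqref{capmatrix} one checks that each of these equals $-\tfrac{1}{|D_i|}\mathcal{C}_{ij}\tfrac{1}{|D_j|}$, i.e. this block is exactly $-\bm{V}^{-1}\mathcal{C}\bm{V}^{-1}$. Finally, the $\omega\delta$ part gives $\int_{D_i}\tfrac{4\pi(r_j^{+})^2\i}{|D_j|^3 v}\delta_{1,j}\chi_{D_j}\,\d x=\tfrac{4\pi(r_j^{+})^2\i}{|D_j|^2 v}\delta_{1,j}\delta_{1,i}$, which is precisely the $(i,j)$ entry of $4\pi(r_1^{+})^2\tfrac{\i\omega\delta}{v}\bm{V}^{-1}\bm{E}_1\bm{V}^{-1}$. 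Summing these contributions and absorbing the remainder yields \eqref{bmU}.

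The only point requiring some care is the behaviour at the boundary indices $i=1$ and $i=N$: one must verify that the conventions $r_{N+1}^{+}=0$ and $r_0^{-}\to+\infty$ adopted in Proposition \ref{prop33} collapse the generic expressions to exactly the first and last rows of $\mathcal{C}$ in \eqref{capmatrix} (for instance $\tfrac{r_0^{-}r_1^{+}}{r_0^{-}-r_1^{+}}\to r_1^{+}$, and the $\chi_{D_0}$, $\chi_{D_{N+1}}$ terms are simply absent). Beyond this bookkeeping there is no genuine obstacle; the content of the corollary is entirely the identity $\bm{U}_{ij}=\int_{D_i}u_j$ combined with the identification of the tridiagonal $\delta$-term with $-\bm{V}^{-1}\mathcal{C}\bm{V}^{-1}$.
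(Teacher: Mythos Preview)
Your proposal is correct and follows essentially the same approach as the paper: both proofs substitute the asymptotic expansion \eqref{asy_ujwd} into the definition $\bm{U}_{ij}=\int_{D_i}u_j\,\d x$, integrate term by term (using that $\widetilde{u}_{j,0,1}$ and $\widetilde{u}_{j,1,1}$ have zero mean on each $D_i$), and then identify the resulting tridiagonal $\delta$-term with $-\bm{V}^{-1}\mathcal{C}\bm{V}^{-1}$. Your write-up is in fact more explicit than the paper's, which simply records the entrywise formula for $\bm{U}_{ij}$ and declares the proof complete.
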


\begin{proof}[\bf Proof]
It follows from \eqref{asy_ujwd} and \eqref{eqn:hnkcq} that
	\[
	\begin{aligned}
	\bm{U}_{ij}(\omega,\delta) &= \(1+\frac{\omega^2}{v_\rmr^{2} |D_i|}\)\delta_{i,j}\\
	&\quad +\delta\left[\frac{4\pi r_{j-1}^- r_{j}^+}{|D_{i}||D_j| (r_{j-1}^--r_{j}^+)}\delta_{i,j-1}-\(\frac{4\pi r_j^+ r_{j-1}^-}{|D_i| |D_j|  (r_{j-1}^--r_{j}^+)}  +\frac{4\pi r_j^- r_{j+1}^+}{|D_i||D_j| (r_j^--r_{j+1}^+)}\)\delta_{i,j}\right.\\
	&\quad \left.\quad\quad+\frac{4\pi r_{j+1}^+|r_{j}^-}{|D_{i}| |D_j| (r_{j}^--r_{j+1}^+)} \delta_{i,j+1} \right]\\
	&\quad +\frac{4\pi (r_j^+)^2\i \omega\delta}{|D_j|^{2}v
	}\delta_{1,j}\delta_{i,j} +O((\omega^2+\delta)^2).
	\end{aligned}
	\]
	The proof is complete.
\end{proof}
The next lemma shows that the capacitance matrix $\mathcal{C}$ is  positive-definite.

\begin{lem}\label{CMSP}
The capacitance matrix $\mathcal{C}$ defined in \eqref{capmatrix} is positive-definite.
\end{lem}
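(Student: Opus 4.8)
The plan is to diagonalize the quadratic form $\bm{x}^T\mathcal{C}\bm{x}$ as a sum of squares with strictly positive weights. Write $\mathcal{C}=4\pi L$ and encode the tridiagonal matrix $L$ through the numbers
\[
e_1:=r_1^+,\qquad e_j:=\frac{r_{j-1}^- r_j^+}{r_{j-1}^- - r_j^+}\quad(j=2,\ldots,N),\qquad e_{N+1}:=0.
\]
The nesting of the layers forces $r_{j-1}^- > r_j^+ > 0$ for $2\le j\le N$ (since $D_{j-1}'=\{r_j^+<|x|\le r_{j-1}^-\}$ is a genuine gap), hence $e_j>0$ for every $1\le j\le N$. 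Reading off \eqref{capmatrix}, one has $L_{jj}=e_j+e_{j+1}$, $L_{j,j+1}=L_{j+1,j}=-e_{j+1}$, and all remaining entries vanish; the boundary conventions $r_0^-\to+\infty$ and $r_{N+1}^+=0$ are precisely what make the formulas at $j=1$ and $j=N$ fit this uniform pattern.

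Next I would compute, for arbitrary $\bm{x}=(x_1,\ldots,x_N)^T\in\RR^N$,
\[
\bm{x}^T L\bm{x}=\sum_{j=1}^N (e_j+e_{j+1})x_j^2-2\sum_{j=1}^{N-1}e_{j+1}x_j x_{j+1},
\]
and regroup by the coefficient $e_m$: for $2\le m\le N$ the terms carrying $e_m$ are exactly $x_{m-1}^2$, $x_m^2$ and $-2x_{m-1}x_m$, which assemble into $e_m(x_{m-1}-x_m)^2$; the coefficient $e_1$ survives only in $x_1^2$; and $e_{N+1}=0$ drops out. This yields the identity
\[
\bm{x}^T L\bm{x}=e_1 x_1^2+\sum_{m=2}^N e_m\,(x_{m-1}-x_m)^2\ \ge\ 0.
\]
Since all $e_m>0$, the right-hand side vanishes only if $x_1=0$ and $x_{m-1}=x_m$ for $m=2,\ldots,N$, i.e.\ only if $\bm{x}=0$; hence $L$, and therefore $\mathcal{C}=4\pi L$, is positive definite.

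There is no genuine obstacle here — the argument is a short exact computation — so the only points requiring care are the strict positivity of each $e_j$ (where the geometric hypothesis $r_{j-1}^->r_j^+$ together with the limiting conventions enters) and the exactness of the telescoping of the cross terms. As a more conceptual alternative I could instead apply Green's identity to \eqref{V_i}--\eqref{cap_matrix} to obtain $\mathcal{C}_{ij}=\int_{\RR^3\setminus D}\nabla V_i\cdot\nabla V_j\,\d x$, exhibiting $\mathcal{C}$ as the Gram matrix of $\{\nabla V_1,\ldots,\nabla V_N\}$ in $L^2(\RR^3\setminus D)$; positive-definiteness then follows from the linear independence of these gradients, which holds because $V_j\equiv\delta_{ij}$ on $D_i$. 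I would nonetheless present the explicit sum-of-squares decomposition as the main proof, since the closed form \eqref{capmatrix} is already available, and relegate the variational identity to a remark.
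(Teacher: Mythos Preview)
Your proposal is correct and follows essentially the same route as the paper: both compute the quadratic form $\bm{x}^T\mathcal{C}\bm{x}$ directly and regroup it as $4\pi\bigl(e_1 x_1^2+\sum_{m=2}^N e_m(x_{m-1}-x_m)^2\bigr)$ with strictly positive weights $e_m$, concluding positive-definiteness from the vanishing condition. Your introduction of the $e_j$ notation streamlines the bookkeeping, and the Gram-matrix remark via Green's identity is a nice optional addition, but the core argument is identical to the paper's.
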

\begin{proof}[\bf Proof]
For any vector $\bm{d} = (d_i)_{1\le i\le N}\in \RR ^N$,
\[
\begin{aligned}
\bm{d}^T \mathcal{C}  \bm{d}& = 4\pi\sum_{j=1}^N \left[- \frac{r_{j-1}^{-} r_{j}^{+}}{r_{j-1}^{-}-r_{j}^{+}}d_{j-1}d_j
+\(\frac{r_{j-1}^{-} r_{j}^{+}}{r_{j-1}^{-}- r_{j}^{+}} + \frac{r_{j}^{-} r_{j+1}^{+}}{r_{j}^{-}- r_{j+1}^{+}}\)d_j^2
- \frac{r_{j}^{-} r_{j+1}^{+}}{r_{j}^{-}-r_{j+1}^{+}}d_j d_{j+1}\right]\\
& = 4\pi\sum_{j=1}^{N-1} \left[- \frac{r_{j}^{-} r_{j+1}^{+}}{r_{j}^{-}-r_{j+1}^{+}}d_{j}d_{j+1}
+\frac{r_{j}^{-} r_{j+1}^{+}}{r_{j}^{-}- r_{j+1}^{+}}d_{j+1}^2 + \frac{r_{j}^{-} r_{j+1}^{+}}{r_{j}^{-}- r_{j+1}^{+}}d_j^2
- \frac{r_{j}^{-} r_{j+1}^{+}}{r_{j}^{-}-r_{j+1}^{+}}d_j d_{j+1}\right] +4\pi r_1^{+} d_1^2\\
 & =  4\pi\sum_{j=1}^{N-1} \frac{r_{j}^{-} r_{j+1}^{+}}{r_{j}^{-}-r_{j+1}^{+}}\(d_{j+1}-d_j\)^2+4\pi r_1^{+} d_1^2,
\end{aligned}
\]
where we have used $d_0 = d_{N+1} = 0$, $r_{N+1}^+ = 0$ and $r_0^- \to  +\infty$.
Noting that $\frac{r_{j}^{-} r_{j+1}^{+}}{r_{j}^{-}-r_{j+1}^{+}}>0$ for $j=1,2,\ldots,N-1$, we have that $\bm{d}^T \mathcal{C}  \bm{d}\geq 0 $ for any $\bm{d} \in \RR ^N$ with equality if and only if $\bm{d} = \bm{0}$. The proof is complete.
\end{proof}

Next, we consider the $N$ eigenvalues $(\lambda_i)_{1\le i\le N}$ and eigenvectors $(\bm{a}_i)_{1\le i\le N}$ of the generalized capacitance eigenvalue problem:
\begin{equation}\label{GEP}
\mathcal{C}\bm{a}_i = \lambda_i \bm{V}\bm{a}_i, \; i = 1,2,\ldots,N,
\end{equation}
where the eigenvectors form an orthonormal basis with respect to the following inner product
\begin{equation}\label{orthonormalV}
\bm{a}_i^T\bm{V}\bm{a}_j = \delta_{i,j}.
\end{equation}

 Combining Lemma \ref{CMSP} with \cite[Lemma 7.7.1]{SEP1998}, we have the following lemma.

\begin{lem}
	The $N$ eigenvalues of the capacitance matrix $\mathcal{C}$ in $\RR^3$ defined by \eqref{capmatrix} are distinct:
	\begin{equation}\label{eigenpositive}
	0<\lambda_1<\lambda_2<\cdots<\lambda_N.
	\end{equation}
\end{lem}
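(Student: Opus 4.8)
The plan is to convert the generalized eigenvalue problem \eqref{GEP} into an ordinary eigenvalue problem for a real symmetric tridiagonal matrix, and then establish reality, positivity (from Lemma \ref{CMSP}) and simplicity (from the theory of Jacobi matrices, \cite[Lemma 7.7.1]{SEP1998}) one at a time. First I would exploit that $\bm{V} = \diag(|D_1|,\ldots,|D_N|)$ is diagonal with strictly positive entries, so its square root $\bm{V}^{1/2} = \diag(|D_1|^{1/2},\ldots,|D_N|^{1/2})$ is well defined and invertible. Setting $\widetilde{\mathcal{C}} := \bm{V}^{-1/2}\mathcal{C}\bm{V}^{-1/2}$, the problem $\mathcal{C}\bm{a}_i = \lambda_i\bm{V}\bm{a}_i$ is equivalent to the standard problem $\widetilde{\mathcal{C}}\bm{b}_i = \lambda_i\bm{b}_i$ under the change of variables $\bm{b}_i = \bm{V}^{1/2}\bm{a}_i$, and the $\bm{V}$-orthonormality \eqref{orthonormalV} becomes ordinary orthonormality $\bm{b}_i^T\bm{b}_j = \delta_{ij}$. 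Since $\mathcal{C}$ in \eqref{capmatrix} is symmetric and $\bm{V}^{-1/2}$ is symmetric, $\widetilde{\mathcal{C}}$ is real symmetric, so all $\lambda_i$ are real and there are exactly $N$ of them counted with multiplicity.

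For positivity I would note that $\widetilde{\mathcal{C}}$ is congruent to $\mathcal{C}$ via the invertible matrix $\bm{V}^{-1/2}$; by Lemma \ref{CMSP} the matrix $\mathcal{C}$ is positive definite, hence so is $\widetilde{\mathcal{C}}$, and therefore every eigenvalue satisfies $\lambda_i>0$. (Equivalently, one reads off $\lambda_i = \bm{a}_i^T\mathcal{C}\bm{a}_i/(\bm{a}_i^T\bm{V}\bm{a}_i)>0$ directly from \eqref{GEP}, Lemma \ref{CMSP} and positive-definiteness of $\bm{V}$.)

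For simplicity I would observe that, because $\bm{V}^{-1/2}$ is diagonal and $\mathcal{C}$ in \eqref{capmatrix} is tridiagonal, $\widetilde{\mathcal{C}}$ is again symmetric tridiagonal, with $(j,j+1)$-entry
\[
\widetilde{\mathcal{C}}_{j,j+1} = \frac{\mathcal{C}_{j,j+1}}{\sqrt{|D_j|\,|D_{j+1}|}} = -\frac{4\pi}{\sqrt{|D_j|\,|D_{j+1}|}}\cdot\frac{r_j^- r_{j+1}^+}{r_j^- - r_{j+1}^+},\qquad j=1,2,\ldots,N-1.
\]
Since the nesting constraints give $r_j^- > r_{j+1}^+ > 0$, and $|D_j|>0$, each such off-diagonal entry is strictly negative, in particular nonzero, so $\widetilde{\mathcal{C}}$ is an unreduced (Jacobi) symmetric tridiagonal matrix. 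By \cite[Lemma 7.7.1]{SEP1998}, the eigenvalues of such a matrix are pairwise distinct. Combining this with the reality and positivity established above, the $N$ eigenvalues can be ordered as $0<\lambda_1<\lambda_2<\cdots<\lambda_N$, which is the claim.

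The argument is essentially routine once the symmetrization is in place; the only point needing a little care is verifying that the congruence $\bm{V}^{-1/2}\mathcal{C}\bm{V}^{-1/2}$ keeps the tridiagonal band and, crucially, that none of its sub/super-diagonal entries vanishes — this non-degeneracy is precisely what licenses the appeal to the Jacobi-matrix simplicity result of \cite{SEP1998}, and it relies only on the geometric constraints $r_j^- > r_{j+1}^+ > 0$ together with $|D_j|>0$.
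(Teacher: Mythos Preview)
Your proof is correct and follows exactly the approach the paper indicates: the paper simply states that the result follows by combining Lemma~\ref{CMSP} with \cite[Lemma~7.7.1]{SEP1998}, and your argument is precisely the detailed unpacking of that citation --- symmetrizing via $\bm{V}^{-1/2}$, inheriting positive definiteness from Lemma~\ref{CMSP}, and invoking the Jacobi-matrix simplicity result after checking the off-diagonal entries are nonzero.
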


Our main result in this subsection is the following:

\begin{thm}\label{thmeigenfreqc}
The acoustic scattering problem \eqref{main_equation} in the MLHC structure $D = \cup_{j=1}^{N}D_{j}$  admits exactly $2N$ eigenfrequencies:
\begin{equation}\label{equeigenfreqc}
\omega_i^\pm(\delta) = \pm \delta^{\frac{1}{2}}\lambda_i^{\frac{1}{2}}v_\rmr -2\pi (r_1^+)^2 \frac{\i \delta  v_\rmr^2 }{ v} \bm{a}_i^T\bm{E}_1\bm{a}_i + O(\delta^{\frac{3}{2}})\quad \text{ for } i = 1,2,\ldots,N.
\end{equation}
\end{thm}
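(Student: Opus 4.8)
The plan is to combine the third characterization of resonances (Proposition \ref{TCR}) with the asymptotic expansion \eqref{bmU} of $\bm{U}(\omega,\delta)$, and then to run a first-order analytic perturbation of the generalized capacitance eigenvalue problem \eqref{GEP}. By Proposition \ref{TCR} the subwavelength resonances are the zeros of $\det(\bm{I}-\bm{U}(\omega,\delta))$ near $(\omega,\delta)=(0,0)$. First I would conjugate $\bm{U}(\omega,\delta)-\bm{I}$ by the invertible diagonal matrix $\bm{V}$: by \eqref{bmU} and $\det\bm{V}\neq0$, a point $\omega$ is a subwavelength resonance if and only if
\[
h(\omega,\delta):=\det\!\left(\delta\,\mathcal{C}-\frac{\omega^2}{v_\rmr^2}\bm{V}-4\pi(r_1^+)^2\frac{\i\omega\delta}{v}\bm{E}_1+O\!\left((\omega^2+\delta)^2\right)\right)=0 ,
\]
where the remainder matrix is analytic in $(\omega,\delta)$ near the origin because $\bm{U}$ is. Balancing the dominant terms of $h$ shows that every subwavelength resonance satisfies $|\omega|\asymp\delta^{1/2}$: if $\omega^2\gg\delta$ the determinant equals $(-\omega^2/v_\rmr^2)^{N}\det\bm{V}(1+o(1))\neq0$, and if $\omega^2\ll\delta$ it equals $\delta^{N}\det\mathcal{C}(1+o(1))\neq0$ by Lemma \ref{CMSP}.

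Next I would perform the rescaling $\omega=\delta^{1/2}\zeta$, $\sigma=\delta^{1/2}$ dictated by this balance: every entry of the matrix in $h$ is then $O(\delta)$, so $h=\delta^{N}\,\tilde{h}(\zeta,\sigma)$ with
\[
\tilde{h}(\zeta,\sigma)=\det\!\left(\mathcal{C}-\frac{\zeta^2}{v_\rmr^2}\bm{V}-4\pi(r_1^+)^2\frac{\i\sigma\zeta}{v}\bm{E}_1+O(\sigma^2)\right),
\]
analytic in $(\zeta,\sigma)$ for $|\sigma|$ small and $\zeta$ in any bounded region. At $\sigma=0$, $\tilde{h}(\zeta,0)=\det\bm{V}\,\prod_{k=1}^{N}\!\left(\lambda_k-\zeta^2/v_\rmr^2\right)$, and by the positivity and distinctness \eqref{eigenpositive} of the generalized eigenvalues this polynomial in $\zeta$ has exactly the $2N$ simple, nonzero, pairwise distinct roots $\zeta=\pm v_\rmr\lambda_i^{1/2}$, $i=1,\dots,N$. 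Since $\partial_\zeta\tilde{h}\neq0$ at each of them, the holomorphic implicit function theorem yields, for $|\sigma|$ small, $2N$ analytic branches $\zeta_i^{\pm}(\sigma)$ with $\zeta_i^{\pm}(0)=\pm v_\rmr\lambda_i^{1/2}$ and $\tilde{h}(\zeta_i^{\pm}(\sigma),\sigma)=0$. A Rouch\'e argument on a fixed disk containing all $2N$ roots, together with the a priori localization $|\omega|\asymp\delta^{1/2}$, shows that these $2N$ branches yield all the subwavelength resonances $\omega_i^{\pm}(\delta)=\delta^{1/2}\zeta_i^{\pm}(\delta^{1/2})$, in agreement with Theorem \ref{thmres} and Lemma \ref{symmetric_res}; this establishes the count of exactly $2N$.

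It then remains to compute $\frac{d}{d\sigma}\zeta_i^{\pm}(0)=-\frac{\partial_\sigma\tilde{h}}{\partial_\zeta\tilde{h}}$ at $(\zeta_0,0)$, $\zeta_0=\pm v_\rmr\lambda_i^{1/2}$. At such a point $\tilde{M}_0:=\mathcal{C}-\lambda_i\bm{V}$ has corank one; since $\mathcal{C}$ and $\bm{V}$ are real symmetric, its left and right null vectors both equal the generalized eigenvector $\bm{a}_i$, normalized by \eqref{orthonormalV} so that $\bm{a}_i^{T}\bm{V}\bm{a}_i=1$. The cofactor identity gives $\det(\tilde{M}_0+t\bm{B})=\gamma\,(\bm{a}_i^{T}\bm{B}\bm{a}_i)\,t+O(t^2)$ with a nonzero scalar $\gamma$ (determined by $\mathrm{adj}(\tilde{M}_0)=\gamma\,\bm{a}_i\bm{a}_i^{T}$) independent of $\bm{B}$; applying it with $\partial_\zeta\tilde{M}|_{(\zeta_0,0)}=-2\zeta_0 v_\rmr^{-2}\bm{V}$ and $\partial_\sigma\tilde{M}|_{(\zeta_0,0)}=-4\pi(r_1^+)^2\,\i\,\zeta_0 v^{-1}\bm{E}_1$, the scalars $\gamma$ and $\zeta_0$ cancel and
\[
\frac{d}{d\sigma}\zeta_i^{\pm}(0)=-2\pi(r_1^+)^2\,\frac{\i v_\rmr^2}{v}\,\bm{a}_i^{T}\bm{E}_1\bm{a}_i .
\]
Inserting $\zeta_i^{\pm}(\sigma)=\pm v_\rmr\lambda_i^{1/2}+\sigma\,\frac{d}{d\sigma}\zeta_i^{\pm}(0)+O(\sigma^2)$ into $\omega_i^{\pm}(\delta)=\delta^{1/2}\zeta_i^{\pm}(\delta^{1/2})$ then gives exactly \eqref{equeigenfreqc} (the two signs being related by $\omega_i^{-}=-\overline{\omega_i^{+}}$, consistent with Lemma \ref{symmetric_res} since $\bm{a}_i^{T}\bm{E}_1\bm{a}_i\in\RR$). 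I expect the main obstacle to be the bookkeeping in these two perturbative reductions — verifying that analyticity survives the rescaling $\omega=\delta^{1/2}\zeta$ so that the implicit function theorem applies with $\delta^{1/2}$ as small parameter, and correctly extracting the first-order coefficient from the derivative of a determinant at a corank-one matrix; once the null vector is identified with $\bm{a}_i$ and \eqref{orthonormalV} is used, the rest is routine.
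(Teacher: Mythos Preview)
Your argument is correct and leads to the same formula, but it follows a genuinely different route from the paper's. The paper introduces the auxiliary variable $\beta=\omega^2/\delta$ and applies the implicit function theorem to the map
\[
F((\beta,\bm{x}),\omega)=\Big(\tfrac{\beta}{\omega^2}(\bm{I}-\bm{U}(\omega,\delta))\bm{V}\bm{x},\ \bm{x}^T\bm{V}\bm{x}-1\Big),
\]
thereby tracking simultaneously the perturbed eigenvalue $\beta_i(\omega)$ \emph{and} the perturbed eigenvector $\bm{a}_i(\omega)$; it then solves the scalar equation $\omega^2=\delta\beta_i(\omega)$ a posteriori. You instead pass directly to the determinant, perform the rescaling $\omega=\delta^{1/2}\zeta$, and apply the implicit function theorem to the scalar equation $\tilde h(\zeta,\sigma)=0$; the first-order correction is extracted via the adjugate identity for a corank-one matrix. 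Your method is arguably more elementary for the purpose at hand (only the eigenfrequencies), and the added balancing/Rouch\'e step gives an independent confirmation of the exact count $2N$. The paper's method has the advantage that the perturbed eigenvectors $\bm{a}_i(\omega)$ come out of the same computation, and these are reused in Section~\ref{MSmonopolar} for the modal decomposition. A minor point: your claim that $\tilde h$ is analytic in $(\zeta,\sigma)$ after dividing out $\sigma^{2N}$ relies on each term of the remainder $R(\omega,\delta)=O((\omega^2+\delta)^2)$ having total $\sigma$-degree at least $4$ under $\omega=\sigma\zeta$, $\delta=\sigma^2$; this is indeed guaranteed by the structure of the expansion in Proposition~\ref{prop33} (in particular $u_{j,2m+1,0}=0$), and it would be worth stating that explicitly.
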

\begin{proof}[\bf Proof]
	We first let $\beta:= \frac{\omega^2}{\delta}$ and introduce the function
	\[
	F((\beta,\bm{x}),\omega):=\(\frac{\beta}{\omega^2}\(\bm{I}-\bm{U}(\omega,\delta)\)\bm{V}\bm{x},\bm{x}^T\bm{V}\bm{x}-1\).
	\]
	From \eqref{bmU}, we have
\begin{equation}\label{Veigenvector}
\frac{\beta}{\omega^2}\(\bm{I}-\bm{U}(\omega,\delta)\)\bm{V}\bm{x} = \(-\frac{\beta}{v_\rmr^{2}} \bm{I} + \bm{V}^{-1}\mathcal{C}  -4\pi (r_1^+)^2\frac{\i \omega}{v} \bm{V}^{-1}\bm{E}_1 +O(\omega^2)\)\bm{x}.
\end{equation}
Then it is easy to see that $F$ is a smooth function of $\omega,\beta\in \mathbb{C}$.
	By \eqref{GEP}, for $\omega=0$, it holds $F((\lambda_i v_\rmr^2,\bm{a}_i),0)=0$ for $i=1,2,\ldots,N$. In order to use the implicit function theorem, we next show that the differential of $(\beta,\bm{x})\mapsto F((\beta,\bm{x}),0)$ is invertible at $(\beta,\bm{x}) = (\lambda_i v_\rmr^2,\bm{a}_i)$. Through straightforward calculations, we have that for $(\tilde{\beta},\tilde{\bm{x}})\in \mathbb{C}\times\mathbb{C}^N$,
	\[
	\mathrm{D}F((\lambda_i v_\rmr^2,\bm{a}_i),0) (\tilde{\beta},\tilde{\bm{x}})^T =
	\begin{pmatrix}
	-\frac{\bm{a}_i}{v_\rmr^2}&-\lambda_i \bm{I}+\bm{V}^{-1}\mathcal{C}\\
	0&2\bm{a}_i^T\bm{V}
	\end{pmatrix}
	\begin{pmatrix}
	\tilde{\beta}\\
	\tilde{\bm{x}}
	\end{pmatrix} =
\begin{pmatrix}-\tilde{\beta}\frac{\bm{a}_i}{v_\rmr^2}+(-\lambda_i \bm{I}+\bm{V}^{-1}\mathcal{C})\tilde{\bm{x}}\\2\bm{a}_i^T\bm{V}\tilde{\bm{x}}\end{pmatrix}.
	\]
From \eqref{GEP}--\eqref{eigenpositive}, we know that $\mathrm{D}F((\lambda_i v_\rmr^2,\bm{a}_i),0)$ is invertible.
Thus, we get the existence of analytic functions $\beta_i(\omega)$ and $\bm{a}_i(\omega)$ satisfying
	\begin{equation}\label{analyba}
	F((\beta_i(\omega),\bm{a}_i(\omega)),\omega) = 0,
	\end{equation}
	 for $\omega\in \mathbb{C}$ belonging to a neighborhood of zero with $\beta_i(0) = \lambda_i v_\rmr^2$ and $\bm{a}_i(0) =  \bm{a}_i$. Furthermore, differentiating \eqref{analyba} with respect to $\omega$ at $\omega=0$, we have that
	\[
	\begin{cases}
	-\frac{\beta_i'(0)}{v_\rmr^{2}} \bm{a}_i   -4\pi (r_1^+)^2\frac{\i }{v} \bm{V}^{-1}\bm{E}_1\bm{a}_i +\(-\lambda_i \bm{I}+ \bm{V}^{-1}\mathcal{C}\)\bm{a}'_i(0) = 0,\\
	\bm{a}'_i(0)^T\bm{V}\bm{a}_i= 0.
	\end{cases}.
	\]
	Left multiplying by $\bm{a}_i^T\bm{V}$, we can obtain
	\[
	\beta_i'(0) = -4\pi (r_1^+)^2\frac{\i v_\rmr^2}{v} \bm{a}_i^T\bm{E}_1\bm{a}_i \quad \text{ and }\; \bm{a}'_i(0) =4\pi (r_1^+)^2\frac{\i}{v} \sum_{j\neq i}\frac{\bm{a}_j^T \bm{E}_1\bm{a}_i}{\lambda_j-\lambda_i}\bm{a}_j,
	\]
	which implies that
	\begin{equation}\label{beta}
	\beta_i(\omega) = \lambda_i v_\rmr^2-4\pi (r_1^+)^2\frac{\i \omega v_\rmr^2}{v} \bm{a}_i^T\bm{E}_1\bm{a}_i + O(\omega^2),
	\end{equation}
	and
	\begin{equation}\label{eigenvetor_omega}
	\bm{a}_i(\omega) = \bm{a}_i +  4\pi (r_1^+)^2\frac{\i\omega}{v} \sum_{j\neq i}\frac{\bm{a}_j^T \bm{E}_1\bm{a}_i}{\lambda_j-\lambda_i}\bm{a}_j +O(\omega^2).
	\end{equation}
	Eigenfrequencies  are the solutions to the equation $\omega^2=\delta \beta_i(\omega)$, that is,
	$
	\omega = \delta^{\frac{1}{2}}\sqrt{\beta_i(\omega)}$ or  $\omega = -\delta^{\frac{1}{2}}\sqrt{\beta_i(\omega)}.
	$
	 By using \eqref{beta}, we can obtain that
	\[
	\begin{aligned}
	\omega_i^\pm(\delta) &= \pm \delta^{\frac{1}{2}}\lambda_i^{\frac{1}{2}}v_\rmr \sqrt{1\mp 4\pi (r_1^+)^2\frac{\i \delta^{\frac{1}{2}} \lambda_i^{\frac{1}{2}} v_\rmr }{\lambda_i v} \bm{a}_i^T\bm{E}_1\bm{a}_i + O(\delta)}\\
	& = \pm \delta^{\frac{1}{2}}\lambda_i^{\frac{1}{2}}v_\rmr \({1\mp 2\pi (r_1^+)^2\frac{\i \delta^{\frac{1}{2}} \lambda_i^{-\frac{1}{2}} v_\rmr }{ v} \bm{a}_i^T\bm{E}_1\bm{a}_i + O(\delta)}\)\\
	& = \pm \delta^{\frac{1}{2}}\lambda_i^{\frac{1}{2}}v_\rmr -2\pi (r_1^+)^2 \frac{\i \delta  v_\rmr^2 }{ v} \bm{a}_i^T\bm{E}_1\bm{a}_i + O(\delta^{\frac{3}{2}}).
	\end{aligned}
	\]
	The proof is complete.
\end{proof}
%


\section{Modal decompositions and point scatterer approximations}\label{MSmonopolar}

In this section,  we shall provide the  modal decomposition and  point-scatterer approximation of the solution $u(\omega ,\delta )$ to the scattering problem \eqref{main_equation} in MLHC metamaterial $D = \cup_{j=1}^{N}D_{j}$ based on DtN  approach.

We first derive anasymptotic expansion of the solution $u_f(\omega,\delta)$ to the variational problem \eqref{Lax_Milgram_solu}.

\begin{prop}
Let $f\in H^{-1}(D)$ be given in \eqref{rightlinear}. The solution $u_f(\omega,\delta)$ to \eqref{Lax_Milgram_solu} has the following asymptotic expansion:
\begin{equation}\label{Lax_Milgram_solu_asym_expa}
u_f(\omega,\delta) = \delta \(\frac{4\pi r_1^+}{|D_1|^2}u^{in}(0)\chi_{D_1}+\widetilde{v}_{0,1}\)
+ O(\omega\delta),
\end{equation}
where 
$\int_{D_i}\widetilde{v}_{0,1}~\d x = 0$ for $i=1,2,\ldots,N$.
\end{prop}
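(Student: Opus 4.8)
The plan is to reproduce, at the level of the source term, the perturbative scheme already used for Proposition~\ref{prop33}: expand $u_f(\omega,\delta)$ in powers of $\omega$ and $\delta$, isolate the leading coefficient, and recover its $D_i$-averages by testing the variational identity against the indicator functions $\chi_{D_i}$. First I would exploit that the linear form $\langle f,\cdot\rangle_{H^{-1}(D),H^1(D)}$ in \eqref{rightlinear} carries an explicit prefactor $\delta$: setting $\delta=0$ in \eqref{Lax_Milgram_solu} and using that $a_{\omega,0}$ is coercive for small $\omega$ forces $u_f(\omega,0)=0$. Together with the analyticity of $u_f$ in $(\omega,\delta)$ near the origin (Lax--Milgram applied to the analytic coercive perturbation $a_{\omega,\delta}$), this gives a convergent expansion $u_f(\omega,\delta)=\sum_{m,n\ge 0}\omega^m\delta^n v_{m,n}$ with $v_{m,0}=0$ for all $m$, hence $u_f(\omega,\delta)=\delta\,v_{0,1}+O(\omega\delta)$, the $O(\delta^2)$ contribution being absorbed into the stated remainder as in \cite{FA_JMPA2024}.

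Next I would determine the variational problem solved by $v_{0,1}$ by inserting this expansion into \eqref{eq:varpro}/\eqref{Lax_Milgram_solu} and collecting the coefficient of $\delta^1\omega^0$. On the left-hand side, the DtN term of $a(\cdot,\cdot)$ already carries a factor $\delta$, so evaluated on $u_f=\delta v_{0,1}+\cdots$ it only contributes at order $\delta^2$; likewise the mass term $-\tfrac{\omega^2}{v_\rmr^2}\!\int_{D_i}u_f\overline v$ is of order $\omega^2\delta$. Thus the order-$\delta$ part of the left side is exactly $a_{0,0}(v_{0,1},v)$. On the right side, at $\omega=0$ one has $k=\omega/v=0$, so $\mathcal{T}^{k}\to\mathcal{T}_0$, the incoming wave reduces to the constant $u^{in}(0)$, and $\partial u^{in}/\partial\nu=O(\omega)$ drops out. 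Applying \eqref{T_0expre} to the constant boundary data $f_i^\pm=u^{in}(0)$ makes all the first-difference entries vanish, leaving only $\mathcal{T}_0[u^{in}(0)\,(1)_i]_1^+=-u^{in}(0)/r_1^+$. Hence $v_{0,1}\in H^1(D)$ is characterized by
\[
a_{0,0}(v_{0,1},v)=\frac{u^{in}(0)}{r_1^+}\int_{\Gamma_1^+}\overline v\,\d\sigma,\qquad\forall\,v\in H^1(D).
\]

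To read off the averages, test this identity with $v=\chi_{D_i}$, $i=1,\dots,N$. Since $\nabla\chi_{D_i}=0$ the gradient part of $a_{0,0}$ vanishes and $a_{0,0}(v_{0,1},\chi_{D_i})=|D_i|\int_{D_i}v_{0,1}\,\d x$, while $\int_{\Gamma_1^+}\chi_{D_i}\,\d\sigma=\delta_{i,1}|\Gamma_1^+|=4\pi(r_1^+)^2\delta_{i,1}$. Therefore $\int_{D_i}v_{0,1}\,\d x=\frac{4\pi r_1^+\,u^{in}(0)}{|D_1|}\,\delta_{i,1}$. Setting $\widetilde v_{0,1}:=v_{0,1}-\frac{4\pi r_1^+}{|D_1|^2}u^{in}(0)\chi_{D_1}$ yields $\int_{D_i}\widetilde v_{0,1}\,\d x=0$ for every $i$, and substituting $v_{0,1}=\frac{4\pi r_1^+}{|D_1|^2}u^{in}(0)\chi_{D_1}+\widetilde v_{0,1}$ into $u_f=\delta v_{0,1}+O(\omega\delta)$ gives \eqref{Lax_Milgram_solu_asym_expa}.

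The step requiring the most care is the order bookkeeping, not any genuine computation: one must verify that $\partial u^{in}/\partial\nu$ and the $k$-corrections of $\mathcal{T}^{k}$ (through $u^{in}|_{\Gamma_j^\pm}=u^{in}(0)+O(\omega)$ and $\mathcal{T}^{k}=\mathcal{T}_0+O(k)$) push their contributions down to $O(\omega\delta)$, that the DtN term of $a$ affects $v_{0,1}$ only at order $\delta^2$, and that the $\Gamma_j^\pm$ orientation conventions are used consistently with \eqref{T_0expre} and \eqref{rightlinear} so the sign of the right-hand side above comes out as $+\,u^{in}(0)/r_1^+$. Once these are pinned down, the identification of $v_{0,1}$ is immediate.
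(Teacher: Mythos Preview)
Your proposal is correct and follows essentially the same approach as the paper: both expand $u_f$ in powers of $\omega$ and $\delta$, identify the leading coefficient $v_{0,1}$ at order $\delta$, reduce the right-hand side to $u^{in}(0)/r_1^+$ on $\Gamma_1^+$ using $\mathcal{T}_0$ on constant boundary data, and extract the $D_i$-averages by testing against $\chi_{D_i}$. The only cosmetic difference is that the paper works with the strong form \eqref{Lax_Milgram_solu_strong} (PDE plus boundary conditions for $v_{0,1}$ followed by integration by parts), whereas you stay in the variational formulation $a_{0,0}(v_{0,1},v)=\tfrac{u^{in}(0)}{r_1^+}\int_{\Gamma_1^+}\overline v\,\d\sigma$ throughout.
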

\begin{proof}[\bf Proof]
We consider the following strong form of the variational problem \eqref{Lax_Milgram_solu}
\begin{equation} \label{Lax_Milgram_solu_strong}
\begin{cases}
\ds - \Delta u_f  - \frac{\omega^2}{v_\rmr^2} u_f + \sum_{i=1}^N \(\int_{D_i}u_f~\d x \)\chi_{D_i} = 0, & \text{in  } D, \\
\nm
\ds   \ddp{u_f}{\nu}|_\mp -\delta \mathcal{T}^k[u_f]_j^\pm =- \delta \mathcal{T}^k[u^{in}]_j^\pm + \delta\ddp{u^{in}}{\nu} , & \text{on }\Gamma^\pm_j, \; j =1,2,\ldots,N.
\end{cases}
\end{equation}
Using the Taylor expansion of plane wave $u^{in}$ at the origin, and the  fact that $\nabla u^{in} = O(\omega) $ and $\nabla^2 u^{in} = O(\omega^2)$, we have that on $\Gamma_j^\pm$
\[
\begin{aligned}
- \delta \mathcal{T}^k[u^{in}]_j^\pm + \delta\ddp{u^{in}}{\nu}& = \delta\left[-\(\mathcal{T}_0+\frac{\omega}{v}\mathcal{T}_1\)\left[u^{in}(0)+\nabla u^{in}(0)\cdot x\right]_j^\pm +O(\omega)\right]\\
& =
\begin{cases}
\ds \frac{u^{in}(0)}{r_1^+}\delta   + O(\omega\delta), &\text{ on } \Gamma_1^+,\\
\nm
\ds O(\omega\delta), &\text{ else }.
\end{cases}
\end{aligned}
\]
One can make the ansatz
\[
u_f(\omega,\delta) = \delta v_{0,1} + O(\omega\delta).
\]
 Substituting this expression into \eqref{Lax_Milgram_solu_strong} and identifying powers of $\omega$ and
 $\delta$, we obtain the following system for $v_{0,1}$:
 \[
 \begin{cases}
 \ds - \Delta v_{0,1}   + \sum_{i=1}^N \(\int_{D_i}v_{0,1}~\d x \)\chi_{D_i} = 0, & \text{in  } D \\
 \nm
 \ds   \ddp{v_{0,1}}{\nu}|_-  = \frac{u^{in}(0)}{r_1^+} , & \text{on }\Gamma^+_1, \\
 \nm
 \ds   \ddp{v_{0,1}}{\nu}|_- = 0, & \text{on }\Gamma^+_j,\; j =2,3,\ldots,N,  \\
 \nm
 \ds  \ddp{v_{0,1}}{\nu}|_+  =0, & \text{on }\Gamma^-_j,\; j =1,2,\ldots,N.
 \end{cases}
 \]
 Integrating by parts on $D_i$, we have
 \[
 \int_{D_i}v_{0,1}~\d x = \frac{4\pi r_i^+}{|D_i|}u^{in}(0)\delta_{1,i},
 \]
 which implies that
 \[
 v_{0,1} = \frac{4\pi r_1^+}{|D_1|^2}u^{in}(0)\chi_{D_1}+\widetilde{v}_{0,1},
 \]
 where  $\int_{D_i}\widetilde{v}_{0,1}~\d x = 0$ for $i=1,2,\ldots,N$.
\end{proof}

\begin{prop}
	For  $\omega\in \RR$ satisfying  $\omega = O(\delta^{\frac{1}{2}})$ and any  $\bm{F} \in \mathbb{C}^N$ in \eqref{eq654}, the solution $\bm{x}(\omega,\delta)$ to \eqref{eq654}  has the following asymptotic modal decomposition:
\begin{equation}\label{eq881}
\bm{x} (\omega,\delta) = -\sum_{i=1}^N \frac{v_\rmr^2}{ \omega^2-\delta\lambda_i v_\rmr^2+4\pi \i \omega\delta (r_1^+)^2\frac{ v_\rmr^2}{v} \bm{a}_i^T\bm{E}_1\bm{a}_i} (1+O(\delta^\frac{1}{2}))\bm{a}_i^T\bm{V}{\bm{F}} \bm{V}\bm{a}_i.
\end{equation}
\end{prop}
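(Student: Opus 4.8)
The plan is to recast the $N\times N$ system \eqref{eq654} as a shifted symmetric generalized resolvent problem and to diagonalize its leading part with the perturbed eigenpairs already constructed in the proof of Theorem~\ref{thmeigenfreqc}. First I would set $\bm{y}:=\bm{V}^{-1}\bm{x}$, insert \eqref{bmU} into \eqref{eq654} and multiply on the left by $\bm{V}/\delta$; using $\omega=O(\delta^{1/2})$ (so $\omega^2/\delta=O(1)$ and $\tfrac1\delta O((\omega^2+\delta)^2)=O(\delta)$), this turns \eqref{eq654} into
\[
\Bigl(\mathcal{C}-4\pi(r_1^+)^2\tfrac{\i\omega}{v}\bm{E}_1-\tfrac{\omega^2}{\delta v_\rmr^2}\bm{V}+\mathcal{E}_\delta\Bigr)\bm{y}=\tfrac1\delta\,\bm{V}\bm{F},\qquad\|\mathcal{E}_\delta\|=O(\delta).
\]
Here $\mathcal{C},\bm{E}_1,\bm{V}$ are symmetric, $\bm{V}$ is positive definite, and the pencil $\bigl(\mathcal{C}-4\pi(r_1^+)^2\tfrac{\i\omega}{v}\bm{E}_1,\bm{V}\bigr)$ is essentially the one behind \eqref{Veigenvector}: for $\omega$ near $0$ it has $N$ distinct eigenvalues, perturbing the $\lambda_i$ of \eqref{eigenpositive}, together with a $\bm{V}$-orthonormal eigenbasis, and these coincide with the analytic data $\bigl(\beta_i(\omega)/v_\rmr^2,\bm{a}_i(\omega)\bigr)$ of \eqref{beta}--\eqref{eigenvetor_omega} up to $O(\omega^2)$.

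Next I would invert the leading part mode by mode: with $\bm{y}=\sum_i c_i\bm{a}_i(\omega)$, discarding $\mathcal{E}_\delta$, applying the operator and left-multiplying by $\bm{a}_j(\omega)^T$ yields $c_j=\dfrac{v_\rmr^2}{\delta\beta_j(\omega)-\omega^2}\,\bm{a}_j(\omega)^T\bm{V}\bm{F}$, hence, after undoing $\bm{x}=\bm{V}\bm{y}$,
\[
\bm{x}^{(0)}(\omega,\delta)=-\sum_{i=1}^N\frac{v_\rmr^2}{\omega^2-\delta\beta_i(\omega)}\,\bigl(\bm{a}_i(\omega)^T\bm{V}\bm{F}\bigr)\,\bm{V}\bm{a}_i(\omega).
\]
To put $\mathcal{E}_\delta$ back I would run a Neumann series. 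The shifted matrix $\mathcal{N}(\omega,\delta):=\mathcal{C}-4\pi(r_1^+)^2\tfrac{\i\omega}{v}\bm{E}_1-\tfrac{\omega^2}{\delta v_\rmr^2}\bm{V}$ has $\|\mathcal{N}(\omega,\delta)^{-1}\|\asymp\delta/\min_i|\delta\beta_i(\omega)-\omega^2|$, and \eqref{beta} gives, for real $\omega\neq0$, the a priori bound $|\delta\beta_i(\omega)-\omega^2|\gtrsim\max\bigl(|\delta\lambda_i v_\rmr^2-\omega^2|,\ |\omega|\delta\bigr)$ from the strictly nonzero imaginary part, proportional to $\omega\delta\,(\bm{a}_i)_1^2$; since each $\lambda_i>0$, this forces $\|\mathcal{N}(\omega,\delta)^{-1}\|=O(\delta^{-1/2})$ uniformly for $\omega=O(\delta^{1/2})$, so $\|\mathcal{N}(\omega,\delta)^{-1}\mathcal{E}_\delta\|=O(\delta^{1/2})\to0$ and $\bm{x}(\omega,\delta)=(\bm{I}+O(\delta^{1/2}))\,\bm{x}^{(0)}(\omega,\delta)$.

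Finally I would replace $\beta_i(\omega)$ and $\bm{a}_i(\omega)$ by their leading values. By \eqref{beta}, $\delta\beta_i(\omega)=\delta\lambda_i v_\rmr^2-4\pi(r_1^+)^2\tfrac{\i\omega\delta v_\rmr^2}{v}\bm{a}_i^T\bm{E}_1\bm{a}_i+O(\delta^2)$, and the lower bound just used shows that the exact denominator equals the denominator in \eqref{eq881} times $1+O(\delta^{1/2})$; by \eqref{eigenvetor_omega}, $\bm{a}_i(\omega)=\bm{a}_i+O(\delta^{1/2})$, so $\bm{a}_i(\omega)^T\bm{V}\bm{F}$ and $\bm{V}\bm{a}_i(\omega)$ agree with $\bm{a}_i^T\bm{V}\bm{F}$ and $\bm{V}\bm{a}_i$ up to relative $O(\delta^{1/2})$. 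Collecting every one of these corrections into the scalar factor $1+O(\delta^{1/2})$ multiplying each modal term produces exactly \eqref{eq881}. I expect the real difficulty to be this last round of bookkeeping near a resonance, where $\omega^2-\delta\beta_i(\omega)$ can be as small as $O(\delta^{3/2})$, so that the $O(\delta)$ perturbation $\mathcal{E}_\delta$ and the $O(\delta^{1/2})$ eigenvector errors are not obviously small against one modal term; the linchpin is the bound $|\omega^2-\delta\beta_i(\omega)|\gtrsim|\omega|\delta$, which rests on the radiation-loss ($\mathcal{T}_1$, i.e.\ $\bm{E}_1$) term being present and on $(\bm{a}_i)_1\neq0$ --- the latter because the generalized eigenvectors of the irreducible tridiagonal pencil $(\mathcal{C},\bm{V})$ have no zero entry.
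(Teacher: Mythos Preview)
Your proposal is correct and follows essentially the same route as the paper: both pass to $\bm{y}=\bm{V}^{-1}\bm{x}$, expand in the perturbed eigenbasis $(\bm{a}_i(\omega))$ of the generalized pencil, and then replace $\beta_i(\omega)$ and $\bm{a}_i(\omega)$ by their leading values using \eqref{beta}--\eqref{eigenvetor_omega}. The only technical difference is that the paper works with the exact operator $\mathcal{G}(\omega,\beta):=\tfrac{\beta}{\omega^2}(\bm{I}-\bm{U})\bm{V}$ and the exact eigendata satisfying $\mathcal{G}(\omega,\beta_i(\omega))\bm{a}_i(\omega)=0$, so the subtraction $\mathcal{G}(\omega,\beta)-\mathcal{G}(\omega,\beta_i(\omega))$ collapses to a scalar without any separate Neumann-series step; your truncate-then-perturb variant achieves the same end, and your explicit lower bound $|\omega^2-\delta\beta_i(\omega)|\gtrsim|\omega|\delta$ via $(\bm{a}_i)_1\neq 0$ is exactly the ingredient the paper defers to \cite{FA_SAM_2022}.
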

\begin{proof}[\bf Proof]
	Let $\beta:= \frac{\omega^2}{\delta}$, $\bm{y}: = \bm{V}^{-1}\bm{x}$, $\mathcal{G}(\omega,\beta):= \frac{\beta}{\omega^2}\(\bm{I}-\bm{U}(\omega,\frac{\omega^2}{\beta})\)\bm{V}$, and $\hat{\bm{F}}:=\frac{1}{\delta}\bm{F}$. Then  \eqref{eq654} can be rewritten by
	\begin{equation}\label{NNlinear}
	\mathcal{G}(\omega,\beta) \bm{y} = \hat{\bm{F}}.
	\end{equation}
	By continuity of the determinant, $(\bm{a}_i(\omega ))_{1\leq i\leq N}$ is a basis of $\mathbb{C}^N$ for $\omega$ sufficiently	small. This enables us to decompose $\bm{y} = \bm{y} (\omega,\delta)$ onto this basis with corresponding coefficients $(y_i(\omega,\delta))_{1\leq i\leq N}$:
	\begin{equation}\label{eq45}
	\bm{y} (\omega,\delta) = \sum_{i=1}^N y_i(\omega,\delta) \bm{a}_i(\omega ).
	\end{equation}
	In view of \eqref{analyba}, one has  $	\mathcal{G}(\omega,\beta_i(\omega)) \bm{a}_i(\omega ) = 0$ for $i=1,2,\ldots,N$. It follows that
	\begin{equation}\label{eq880}
	\mathcal{G}(\omega,\beta) \bm{y} = \sum_{i=1}^N y_i(\omega,\delta) (\mathcal{G}(\omega,\beta)- \mathcal{G}(\omega,\beta_i(\omega))) \bm{a}_i(\omega).
	\end{equation}
	By using \eqref{Veigenvector}, we have
	\begin{equation}\label{eqn47}
	(\mathcal{G}(\omega,\beta)- \mathcal{G}(\omega,\beta_i(\omega))) \bm{a}_i(\omega) = -\frac{\beta-\beta_i(\omega)}{v_\rmr^2}\bm{a}_i(\omega)(1+O(\omega^2)).
	\end{equation}
	Left multiplying \eqref{NNlinear} by $\bm{a}_j^T\bm{V}$ and using \eqref{eq880}--\eqref{eqn47}, we obtain
	\[
	-\sum_{i=1}^N  \bm{a}_j^T\bm{V}\bm{a}_i(\omega)
	 \frac{\beta-\beta_i(\omega)}{v_\rmr^2}(1+O(\omega^2))y_i(\omega,\delta) = \bm{a}_j^T\bm{V}\hat{\bm{F}}.
	\]
	 From \eqref{orthonormalV} and \eqref{eigenvetor_omega}, one has $\bm{a}_j^T\bm{V}\bm{a}_i(\omega) = \delta_{i,j}+O(\omega)$. It follows that
	 \[
	 (\beta-\beta_i(\omega))y_i(\omega,\delta) = -(1+O(\omega)){v_\rmr^2}\bm{a}_i^T\bm{V}\hat{\bm{F}}.
	 \]
	 This, together with \eqref{eigenvetor_omega} and \eqref{eq45}, implies that
	 \begin{equation}
	 \bm{y} (\omega,\delta) = -\sum_{i=1}^N \frac{1}{\beta-\beta_i(\omega)} (1+O(\omega)){v_\rmr^2}\bm{a}_i^T\bm{V}\hat{\bm{F}} \bm{a}_i.
	 \end{equation}
	 Therefore, we obtain that for any $\omega\in \mathbb{C}$,
	 \[
	 \bm{x} (\omega,\delta) = -\sum_{i=1}^N \frac{v_\rmr^2}{\omega^2-\delta\beta_i(\omega)} (1+O(\omega))\bm{a}_i^T\bm{V}{\bm{F}} \bm{V}\bm{a}_i.
	 \]
	 Furthermore, for $\omega\in \RR$ satisfying  $\omega = O(\delta^{\frac{1}{2}})$, it follows  from \eqref{beta} that
	 \[
	 \begin{aligned}
	 \omega^2-\delta\beta_i(\omega)& = \omega^2-\delta\lambda_i v_\rmr^2+4\pi \i \omega\delta (r_1^+)^2\frac{ v_\rmr^2}{v} \bm{a}_i^T\bm{E}_1\bm{a}_i + O(\omega^2\delta)\\
	 & = \(\omega^2-\delta\lambda_i v_\rmr^2+4\pi \i \omega\delta (r_1^+)^2\frac{ v_\rmr^2}{v} \bm{a}_i^T\bm{E}_1\bm{a}_i\)\(1 + O(\delta^{\frac{1}{2}})\),
	 \end{aligned}
	 \]
	 where the order $O(\delta^{\frac{1}{2}})$ can be derived by using a similar argument to the proofs of \cite[Proposition 4.2 and Lemma 4.3]{FA_SAM_2022}. The proof is complete.
\end{proof}

\begin{thm}\label{thm41}
For $\omega\in \RR$ satisfying  $\omega = O(\delta^{\frac{1}{2}})$ and a given plane wave $u^{in}$, the total field $u(\omega ,\delta )$ to the scattering problem \eqref{main_equation} has the following asymptotic modal decomposition in the resonator-nested $D = \cup_{j=1}^{N}D_{j}$ as $\delta \rightarrow 0$:
	\begin{equation}\label{modedecototalu}
	u(\omega,\delta ) = -4\pi r_1^+ u^{in}(0) \sum_{i=1}^N \frac{1}{\lambda_i}\frac{ { \bm{a}^{(1)}_{i}}}{\frac{\omega^2}{\omega_{M,i}^2}-1+\i \gamma_i  } (1+O(\delta^{\frac{1}{2}}))  \sum_{j=1}^N \bm{a}^{(j)}_{i} \chi_{D_j},
	\end{equation}
	where
	\begin{equation}\label{omegamm}
	\omega_{M,i}:=\delta^{\frac{1}{2}}\lambda_i^{\frac{1}{2}}v_\rmr,\; \gamma_i
	: = \frac{4\pi  (r_1^+)^2 \omega}{\lambda_iv} \(\bm{a}^{(1)}_{i}\)^2.
	\end{equation}
\end{thm}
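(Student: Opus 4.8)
The plan is to combine the modal decomposition of $\bm{x}(\omega,\delta)$ from the previous proposition with the representation formula \eqref{eq656} and the asymptotic expansions of $u_f(\omega,\delta)$ and $u_j(\omega,\delta)$ established in Proposition \ref{prop33} and the preceding proposition. First I would recall that, by Lemma \ref{lem32}, the total field restricted to $D$ can be written as $u(\omega,\delta) = u_f(\omega,\delta) + \sum_{j=1}^N x_j(\omega,\delta) u_j(\omega,\delta)$, and that the full solution to \eqref{main_equation} is recovered on $\RR^3\setminus D$ via the DtN extension; for the stated decomposition inside $D$ it suffices to track the leading behaviour of each ingredient.

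The key computation is to feed the right-hand side vector $\bm{F}$ into the modal formula \eqref{eq881}. By the preceding proposition, $\bm{F} = (\int_{D_i} u_f\,\d x)_i$, and from \eqref{Lax_Milgram_solu_asym_expa} together with $\int_{D_i}\widetilde v_{0,1}\,\d x = 0$ we get $F_i = \delta\,\frac{4\pi r_1^+}{|D_1|}u^{in}(0)\,\delta_{1,i} + O(\omega\delta)$, i.e. $\bm{F} = \delta\,\frac{4\pi r_1^+}{|D_1|}u^{in}(0)\,\bm{e}_1 + O(\omega\delta)$ where $\bm{e}_1$ is the first coordinate vector. Then $\bm{a}_i^T\bm{V}\bm{F} = \delta\,4\pi r_1^+ u^{in}(0)\,\bm{a}_i^{(1)} + O(\omega\delta)$, using $(\bm{V}\bm{e}_1)^T = |D_1|\bm{e}_1^T$ so that $\bm{a}_i^T\bm{V}\bm{e}_1/|D_1| = \bm{a}_i^{(1)}$ (here $\bm{a}_i^{(1)}$ denotes the first component of $\bm{a}_i$). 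Substituting into \eqref{eq881} and dividing numerator and denominator of each term by $\delta\lambda_i v_\rmr^2$, the factor $v_\rmr^2\cdot\delta$ in numerator cancels against $\delta\lambda_i v_\rmr^2$ to leave $1/\lambda_i$, and the denominator becomes $\frac{\omega^2}{\delta\lambda_i v_\rmr^2} - 1 + \frac{4\pi\i\omega(r_1^+)^2}{\lambda_i v}\bm{a}_i^T\bm{E}_1\bm{a}_i$, which is exactly $\frac{\omega^2}{\omega_{M,i}^2} - 1 + \i\gamma_i$ once one recognises $\bm{a}_i^T\bm{E}_1\bm{a}_i = (\bm{a}_i^{(1)})^2$ and uses the definitions \eqref{omegamm}. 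This yields $x_j(\omega,\delta) = -4\pi r_1^+ u^{in}(0)\sum_{i=1}^N \frac{1}{\lambda_i}\frac{\bm{a}_i^{(1)}}{\omega^2/\omega_{M,i}^2 - 1 + \i\gamma_i}(1+O(\delta^{1/2}))\,(\bm{V}\bm{a}_i)^{(j)}$, and since $(\bm{V}\bm{a}_i)^{(j)} = |D_j|\,\bm{a}_i^{(j)}$ while $u_j(\omega,\delta) = \frac{1}{|D_j|}\chi_{D_j} + O(\omega^2+\delta)$ by \eqref{asy_ujwd}, the products $x_j u_j$ combine — the $|D_j|$ factors cancelling — to give $\sum_{j=1}^N \bm{a}_i^{(j)}\chi_{D_j}$ in each modal term.

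Finally I would argue that the $u_f$ contribution and all lower-order corrections are absorbed into the claimed $O(\delta^{1/2})$ error: $u_f(\omega,\delta) = O(\delta)$ by \eqref{Lax_Milgram_solu_asym_expa}, whereas near resonance $\omega^2 - \delta\lambda_i v_\rmr^2$ can be as small as $O(\delta^{3/2})$, so each modal term in \eqref{eq881} is of size $O(\delta^{1/2})$ and the $u_f$ term is genuinely smaller by a relative factor $O(\delta^{1/2})$; similarly the $O(\omega^2+\delta)$ correction to $u_j$ and the $O(\omega\delta)$ correction to $\bm{F}$ only perturb each modal term relatively by $O(\delta^{1/2})$ when $\omega = O(\delta^{1/2})$. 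Collecting the surviving leading term gives precisely \eqref{modedecototalu}. The main obstacle is bookkeeping the various error terms consistently near resonance — in particular verifying that the relative error really is $O(\delta^{1/2})$ uniformly, for which I would lean on the argument already invoked in the previous proof following \cite[Proposition 4.2 and Lemma 4.3]{FA_SAM_2022}; the algebraic manipulation turning \eqref{eq881} into the Lorentzian form \eqref{modedecototalu} is otherwise routine.
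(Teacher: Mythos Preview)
Your proposal is correct and follows essentially the same route as the paper's proof: compute $\bm{F}$ from \eqref{Lax_Milgram_solu_asym_expa}, evaluate $\bm{a}_i^T\bm{V}\bm{F}$, substitute into \eqref{eq881} and rewrite the denominator in Lorentzian form, then combine with the leading term $u_j = |D_j|^{-1}\chi_{D_j} + O(\omega^2+\delta)$ via \eqref{eq656} so that the $|D_j|$ factors cancel. Your treatment of the error terms is in fact more explicit than the paper's, which simply states that substituting the asymptotics into \eqref{eq656} concludes the proof.
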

\begin{proof}[\bf Proof]
	It follows from \eqref{right_linearsystem} and \eqref{Lax_Milgram_solu_asym_expa} that
	\[
	F_i = \delta \frac{4\pi r_1^+}{|D_1|}u^{in}(0)\delta_{i,1}
	+ O(\omega\delta),
	\]
	 which implies that
	 \[
	 \bm{a}_i^T\bm{V}\bm{F} =  {4\pi r_1^+}u^{in}(0)\bm{a}^{(1)}_{i}\delta
	 + O(\omega\delta).
	 \]
	 Substituting this expression into \eqref{eq881}, in  the subwavelength regime $\omega = \sqrt{\delta}$, we obtain the asymptotic expansions of the vector $\bm{x}:=\(x_j(\omega,\delta)\)_{1\leq j \leq N}$
	  \begin{equation}\label{bmx}
	  \begin{aligned}
	  x_j (\omega,\delta)& = -4\pi r_1^+ u^{in}(0)\sum_{i=1}^N \frac{\bm{a}^{(1)}_{i}\delta v_\rmr^2}{ \omega^2-\delta\lambda_i v_\rmr^2+4\pi \i \omega\delta (r_1^+)^2\frac{ v_\rmr^2}{v} \bm{a}_i^T\bm{E}_1\bm{a}_i} (1+O(\delta^\frac{1}{2}))|D_j|\bm{a}^{(j)}_i\\
	  & = -4\pi r_1^+ u^{in}(0)\sum_{i=1}^N \frac{1}{\lambda_i}\frac{\bm{a}^{(1)}_{i}}{ \frac{\omega^2}{\omega_{M,i}^2}-1+\i \gamma_i } (1+O(\delta^\frac{1}{2}))|D_j|\bm{a}^{(j)}_i.
	  \end{aligned}
	  \end{equation}
Substituting \eqref{asy_ujwd}, \eqref{Lax_Milgram_solu_asym_expa}, and \eqref{bmx} into \eqref{eq656} concludes the proof.
\end{proof}

In the following, we prove that the structure of $N$-layer nested resonators can be approximated as a point scatterer with monopole modes: $u^{s}(x)$ is approximately proportional to the fundamental solution
$G_k(x)$ given in \eqref{fundamentalk} as $|x| \to +\infty$.

\begin{thm} For $\omega\in \RR$ satisfying $\omega =O(\delta^{\frac{1}{2}})$ and a given plane wave $u^{in}$, the scattered field to \eqref{main_equation} can be approximated as  $\delta\to 0$ by
\[
u^{s}(x)  =  16\pi (r_1^+)^2 u^{in}(0)    \(\sum_{i=1}^N \frac{1}{\lambda_i}\frac{ \({ \bm{a}^{(1)}_{i}}\)^2}{\frac{\omega^2}{\omega_{M,i}^2}-1+\i \gamma_i }\) \(1+O(\delta^{\frac{1}{2}})+O(|x|^{-1})\) G_k(x),
\]	
when $|x|$ is sufficiently large, where  $\omega_{M,i}$ and $\gamma_i$ are difined in \eqref{omegamm}.
\end{thm}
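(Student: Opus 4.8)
The plan is to splice together the exterior representation of the scattered field with the interior modal decomposition of Theorem~\ref{thm41}. From the representation formula \eqref{Helm_solution}, for $x$ in the unbounded component $D_0'=\{|x|>r_1^+\}$ one has $u^s(x)=u(x)-u^{in}(x)=\mathcal{S}^{k}_{\Gamma_1^+}[\psi_1^+](x)$, so the computation splits into three pieces: (i) the leading far-field behaviour of this single-layer potential; (ii) the value of its monopole moment $\int_{\Gamma_1^+}\psi_1^+\,\d\sigma$; (iii) the boundary value $u^s|_{\Gamma_1^+}$, which will be read off from Theorem~\ref{thm41}.

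For (i) I would use $G_k(x-y)=-\e^{\i k|x-y|}/(4\pi|x-y|)$ with $|x-y|=|x|-\hat x\cdot y+O(|x|^{-1})$ and $kr_1^+=\omega r_1^+/v=O(\delta^{1/2})$ (since $\omega=O(\delta^{1/2})$) to obtain, uniformly in $y\in\Gamma_1^+$, $G_k(x-y)=G_k(x)\bigl(1+O(\delta^{1/2})+O(|x|^{-1})\bigr)$; because the dipole moment $\int_{\Gamma_1^+}y\,\psi_1^+\,\d\sigma$ vanishes when $\psi_1^+$ is replaced by its constant part and is therefore of lower relative order, this gives
\[
u^s(x)=G_k(x)\Bigl(\int_{\Gamma_1^+}\psi_1^+\,\d\sigma\Bigr)\bigl(1+O(\delta^{1/2})+O(|x|^{-1})\bigr),\qquad|x|\to\infty .
\]
For (ii), by the uniqueness in Lemma~\ref{def:DTN} the function $u^s$ restricted to $D_0'$ also equals $u^s|_{\Gamma_1^+}\,h_0^{(1)}(k|x|)/h_0^{(1)}(kr_1^+)$; comparing the two expressions for large $|x|$ and using $h_0^{(1)}(t)=-\i\e^{\i t}/t$ together with $h_0^{(1)}(kr_1^+)=-\i(kr_1^+)^{-1}\bigl(1+O(\delta^{1/2})\bigr)$ from \eqref{hankelsmall} yields $\int_{\Gamma_1^+}\psi_1^+\,\d\sigma=-4\pi r_1^+\,u^s|_{\Gamma_1^+}\bigl(1+O(\delta^{1/2})\bigr)$, hence $u^s(x)=-4\pi r_1^+\,u^s|_{\Gamma_1^+}\,G_k(x)\bigl(1+O(\delta^{1/2})+O(|x|^{-1})\bigr)$.

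For (iii), Taylor expansion gives $u^{in}|_{\Gamma_1^+}=u^{in}(0)+O(\omega)=u^{in}(0)+O(\delta^{1/2})$, while by Proposition~\ref{prop33} each $u_j$ is constant on every subdomain $D_\ell$ up to an $O(\delta)$ remainder, so the modal decomposition \eqref{modedecototalu} evaluated on $D_1$ (the $j=1$ component) gives $u|_{\Gamma_1^+}=-4\pi r_1^+u^{in}(0)\sum_{i=1}^N\frac1{\lambda_i}\frac{(\bm{a}^{(1)}_i)^2}{\omega^2/\omega_{M,i}^2-1+\i\gamma_i}\bigl(1+O(\delta^{1/2})\bigr)$. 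Substituting $u^s|_{\Gamma_1^+}=u|_{\Gamma_1^+}-u^{in}|_{\Gamma_1^+}$ into the displayed formula for $u^s$ and collecting the constants produces the asserted monopole approximation, with $\omega_{M,i},\gamma_i$ as in \eqref{omegamm}; the contribution of $u^{in}|_{\Gamma_1^+}$ is the non-resonant background, which is of lower relative order throughout the subwavelength window $\omega=O(\delta^{1/2})$ and is absorbed into the $O(\delta^{1/2})$ error.

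The main obstacle is precisely this last step of bookkeeping: one has to verify that the resonant modal sum genuinely dominates the boundary value $u^s|_{\Gamma_1^+}$, so that both the incident field and the non-monopole content of $\psi_1^+$ can legitimately be pushed into the $O(\delta^{1/2})$ remainder, and one has to check that the far-field expansion in (i) is uniform in the observation direction $\hat x$ so that the error is truly $O(\delta^{1/2})+O(|x|^{-1})$ rather than merely pointwise in $\hat x$. Steps (i) and (ii) themselves are routine once the standard far-field expansion of a single-layer potential and the explicit monopole form of $v_f$ from Lemma~\ref{def:DTN} are in hand.
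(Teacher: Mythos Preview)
Your proposal is correct and follows essentially the same route as the paper: start from $u^s=\mathcal{S}^k_{\Gamma_1^+}[\psi_1^+]$ in $D_0'$, read off $u|_{\Gamma_1^+}$ from the modal decomposition \eqref{modedecototalu}, and feed this into the far-field expansion of the single-layer potential. The only substantive difference is in your step~(ii): the paper computes the density via $\psi_1^+=(\mathcal{S}^k_{\Gamma_1^+})^{-1}[u^s|_{\Gamma_1^+}]$ and uses $\mathcal{S}_{\Gamma_1^+}^{-1}[\chi_{\Gamma_1^+}]=-1/r_1^+$, whereas you bypass the single-layer inversion by invoking the explicit outgoing solution $u^s|_{\Gamma_1^+}\,h_0^{(1)}(kr)/h_0^{(1)}(kr_1^+)$ from Lemma~\ref{def:DTN} and matching its far-field form against $G_k(x)\int\psi_1^+\,\d\sigma$. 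The two computations are equivalent (both amount to identifying the $n=0$ mode), and your version is arguably a bit more transparent since it avoids introducing the inverse operator. Your caveat about the bookkeeping---that the $u^{in}|_{\Gamma_1^+}$ contribution must be dominated by the resonant sum to be absorbed into the relative $O(\delta^{1/2})$ error---is exactly the point the paper handles by writing ``$+O(1)$'' and then folding it into $(1+O(\delta^{1/2}))$, so you have correctly identified where the argument is delicate.
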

\begin{proof}[\bf Proof]
	In view of \eqref{Helm_solution}, we have that in the regime $\omega\to 0 $,
	\begin{equation}\label{usc}
	u^{s}(x) = (u-u^{in})(x) =  \mathcal{S}_{\Gamma_1^+}^{k} [(\mathcal{S}_{\Gamma_1^+}^{k} )^{-1}[u|_{\Gamma_1^+}-u^{in}|_{\Gamma_1^+}]](x), \quad  \forall x\in D_0'.
	\end{equation}
	It follows from \eqref{modedecototalu} that
	\[
	\begin{aligned}
	 (\mathcal{S}_{\Gamma_1^+}^{k})^{-1}[u|_{\Gamma_1^+}-u^{in}|_{\Gamma_1^+}] &= -4\pi r_1^+ u^{in}(0) \sum_{i=1}^N \frac{1}{\lambda_i}\frac{ \({ \bm{a}^{(1)}_{i}}\)^2}{\frac{\omega^2}{\omega_{M,i}^2}-1+\i \gamma_i } (1+O(\delta^{\frac{1}{2}}))  \mathcal{S}_{\Gamma_1^+}^{-1}  [\chi_{\Gamma_1^+}] +O(1)\\
	& = 4\pi u^{in}(0) \sum_{i=1}^N \frac{1}{\lambda_i}\frac{ \({ \bm{a}^{(1)}_{i}}\)^2}{\frac{\omega^2}{\omega_{M,i}^2}-1+\i \gamma_i } (1+O(\delta^{\frac{1}{2}}))   +O(1).
	\end{aligned}
	\]
	This, together with \eqref{usc} and  for  $|x|\to\infty$ and  $\delta\to 0$,
	\[
	\mathcal{S}_{\Gamma_1^+}^{k} [\psi_1^+](x) = \(\int_{\Gamma_1^+}\psi_1^+ ~\d \sigma\)\(1+O(\delta^{\frac{1}{2}})+O(|x|^{-1})\)G_k(x),
	\]
concludes the proof.
\end{proof}

\section{Numerical computations}\label{sec5}

In this section, we conduct numerical simulations to corroborate our theoretical findings in the previous sections. We first analyze the mode splitting in multi-layer concentric balls. We now have both an asymptotic  approach and a numerical method for computing the eigenfrequencies of MLHC resonators. It is valuable to compare the virtues of these two methods.
Moreover, it is important to understand the acoustic pressure distribution associated with each eigenfrequency.

\subsection{Mode splitting}\label{subsecMS}
In this subsection, we shall compute  the eigenfrequencies.
To validate  the eigenfrequencies formulas in Theorem \ref{thmeigenfreqc}, we first numerically compute the characteristic value $\omega_N^{(c)}$ of $\bm{A}(\omega, \delta):=\bm{A}_{(0)}(\omega, \delta)$ in \eqref{4Nby4N} based on the spherical wave expansions.  Comparing $\omega_N^{(c)}$ and the exact formulas \eqref{equeigenfreqc} for the eigenfrequencies, denoted by $\omega_N^{(e)}$, based on the eigenvalues of the generalized capacitance matrix $\bm{V}^{-1}\mathcal{C}$ in \eqref{GEP}.

In what follows, we set $\rho_\rmr = \kappa_\rmr = 1$, $\rho = \kappa = 1/\delta$, and $\delta =1/6000$.
Setting $ f(\omega) = \det(\bm{A}(\omega, \delta))$, we have that calculating $\omega_N^{(c)}$ is equivalent to determining the following complex root-finding problem
\begin{equation}\label{CRFP}
\omega_N^{(c)} = \min\limits_{\omega \in \mathbb{C}}\{\omega| \ f(\omega) = 0\},
\end{equation}
which can be calculated by using Muller's method \cite{AK_book2018}.
We consider the radius of layers are equidistance.  For $N$-layer structure, set
\begin{equation}\label{str01}
r_i^+=(N-i+1)\;\text{ and }\;r_i^-=(N-i+0.5), \; \quad i=1, 2, \ldots N.
\end{equation}
The comparison between the two methods for computing the eigenfrequencies with positive real parts in the 24-layer nested resonators is shown in Figure \ref{48layerd1_6000}. The plot presents  both the numerical results obtained using the spherical wave expansions outlined in Section \ref{FCR} and the asymptotic results derived from the eigenvalues of the generalized capacitance matrix specified in Theorem \ref{thmeigenfreqc}. The results from both methods are in excellent agreement.
The primary distinction between the two methods lies in their computational cost. In this example, when run on a laptop, the spherical wave expansion  method required 18 seconds to compute, whereas the capacitance matrix approximation took only 0.03 seconds. This significant difference arises because the numerical method involves root-finding procedures, whereas the asymptotic method bypasses this step, leading to a reduction in computational cost by several orders of magnitude.

\begin{figure}[h]
	\centering
	\includegraphics[scale=0.4]{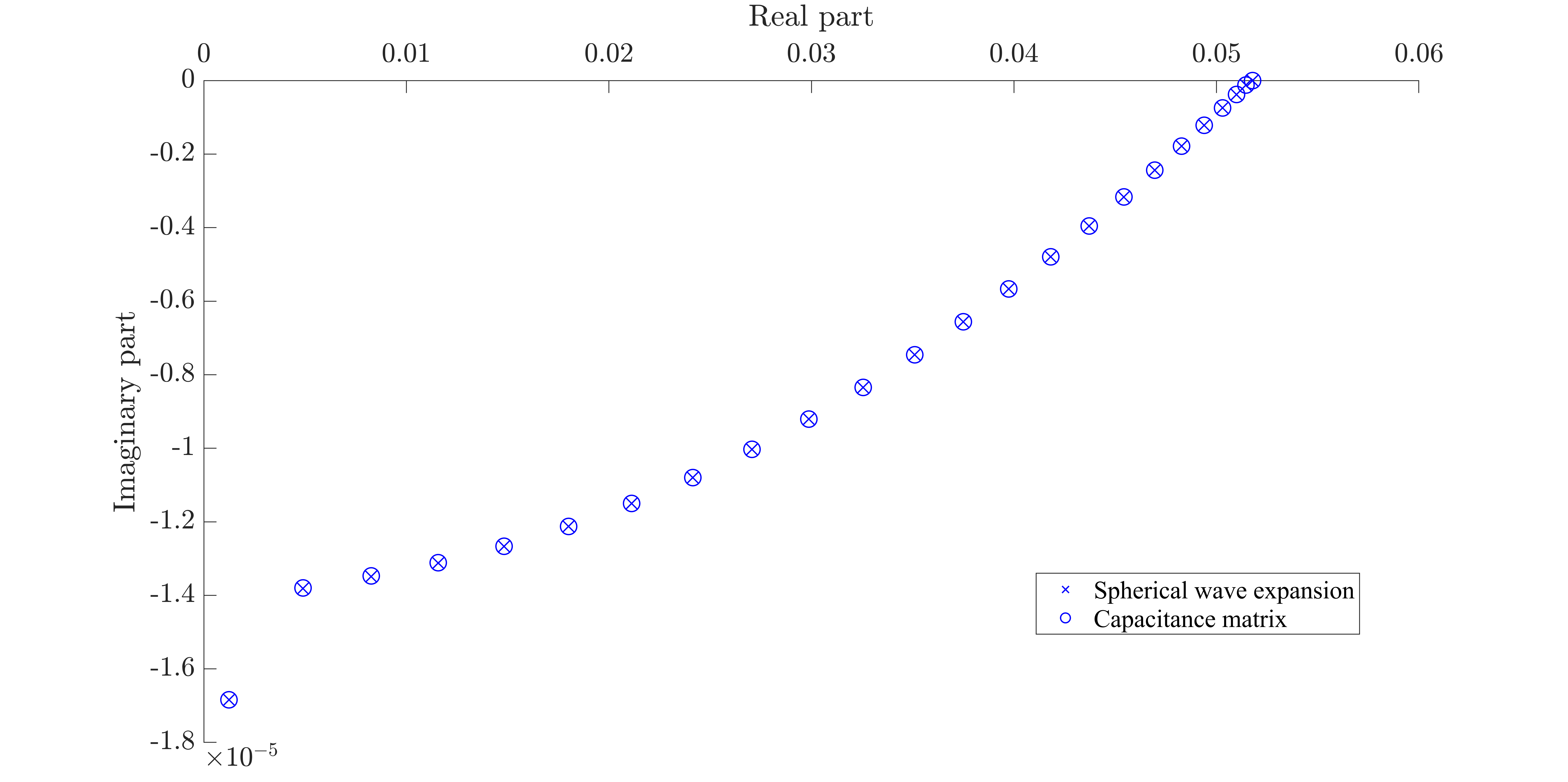}
	\caption{ The subwavelength resonant frequencies, plotted in the complex plane, of the  $24$-layer nested resonators designed by \eqref{str01} with $\delta = 1/6000$. We compare the values computed using the spherical wave expansion  and the values computed using the capacitance matrix. The computations using the spherical wave expansion took 18 seconds while the approximations from the capacitance matrix took just 0.03 seconds, on the same computer.}\label{48layerd1_6000}
\end{figure}

\subsection{Resonant modes and field concentration}

In this subsection, we try to understand  the distributions of the acoustic pressure $u$ to equation \eqref{main_equation} under the excitation of the plane wave $u^{in} = \e^{\i\frac{\omega_{in}}{v} x\cdot d}$.

To facilitate visualization of the results, we focus on four-layer nested resonators. The acoustic pressure distributions $u_{(j)}$ for the four-layer nested resonators, as designed by \eqref{str01}, are shown in Figure \ref{8_layer_d_05_6000}. In this case, the direction of incidence is $d = (1,0,0)$, and the impinging frequency is $\omega_{in} = \omega_j^+ $, for $j = 1, 2, 3, 4 $). The eigenfrequencies $\omega_j^+$ are listed in ascending order of their real parts. The corresponding acoustic pressure distributions inherit the symmetry of the nested resonators and exhibit progressively more oscillatory patterns.

It is evident from the lower plot of Figure \ref{8_layer_d_05_6000} that the acoustic pressure remains approximately constant within each resonator, which supports the modal decomposition derived in Theorem \ref{thm41}. Additionally, it is noteworthy that $ u_{(1)} $ maintains the same sign on each resonator $ D_j $, $u_{(2)}$ changes sign only between $D_1$ and $D_2$, $u_{(3)}$ changes sign between $D_i$ and $D_{i+1}$ for $i = 1, 3 $, and $u_{(4)}$ changes sign between $D_i$ and $D_{i+1}$ for $i = 1, 2, 3$.
This behavior may be associated with the phenomenon of field concentration, where the degree of concentration is characterized by the blow-up rate of the underlying gradient field as the distance between two resonators approaches zero. This phenomenon is a central topic in the theory of composite materials. For studies on gradient blow-up in nearly touching, separated resonators, we refer to \cite{DKLZ_AAMM2024, ADY_MMS2020, LZ_MMS2023, DFLMMS22}. In the context of resonators with nested geometries, the gradient blow-up phenomenon has been primarily investigated in electrostatic problems \cite{KM_MA2019, AKLLL_JMPA2007, LXSIAM2017}.  The determination of blow-up rates for the acoustic field is beyond the scope of the present study, and we shall investigate along this direction in a forthcoming work.

In Figures \ref{8_layer_d_05_6000peak} and \ref{20_layer_d_05_6000peak}, we show how the $L^2$-norm of the acoustic pressure $u$ to equation \eqref{main_equation} varies as a function of the impinging frequency $\omega_{in}$ for the four-layer nested resonators and the ten-layer nested resonators designed by \eqref{str01}, respectively. It can be observed that the peaks of the norm of the acoustic pressure appear when $\omega_{in}$ is close to the real part of the eigenfrequencies, providing evidence for the validity of Theorem \ref{thm41}.

\begin{figure}[htbp]
	\centering
	\includegraphics[scale=0.73]{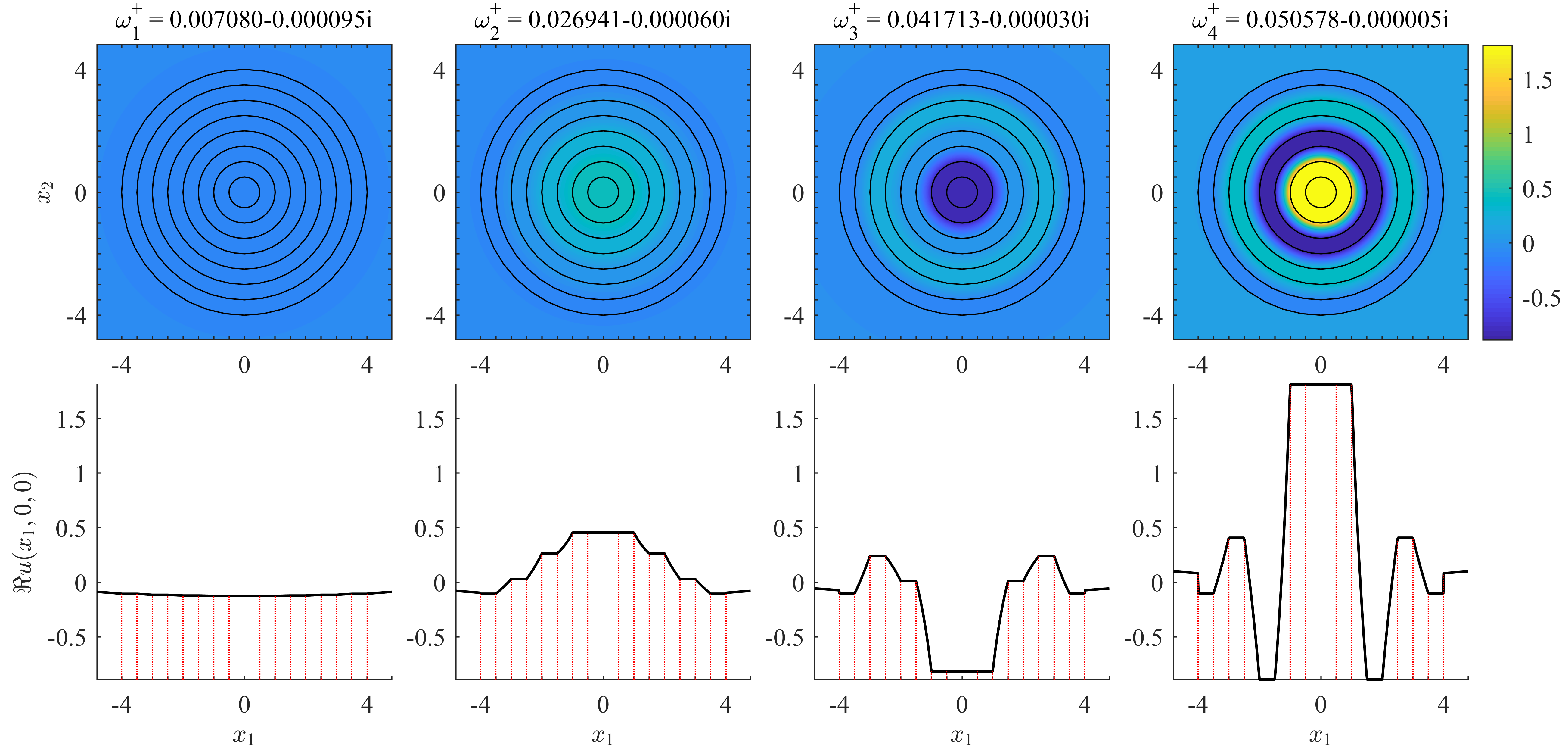}
	\caption{The acoustic pressure distributions $u_{(1)},u_{(2)},u_{(3)},u_{(4)}$ for the four-layer nested resonators designed by \eqref{str01}. Each pair of plots corresponds to one of the four eigenfrequencies. The upper plot displays a contour plot of the function $\Re u_k(x_1, x_2,0)$, with the seven-layer concentric ball designed by \eqref{str01} represented as solid black lines. The lower plot shows the cross section of the upper plot, taken along the line $x_2 = 0$ (passing through the centres of the multi-layered structures). Additionally, red dotted lines represent vertical lines at the coordinates of the radius.}\label{8_layer_d_05_6000}
\end{figure}
\begin{figure}[htbp]
	\centering
	\includegraphics[scale=0.5]{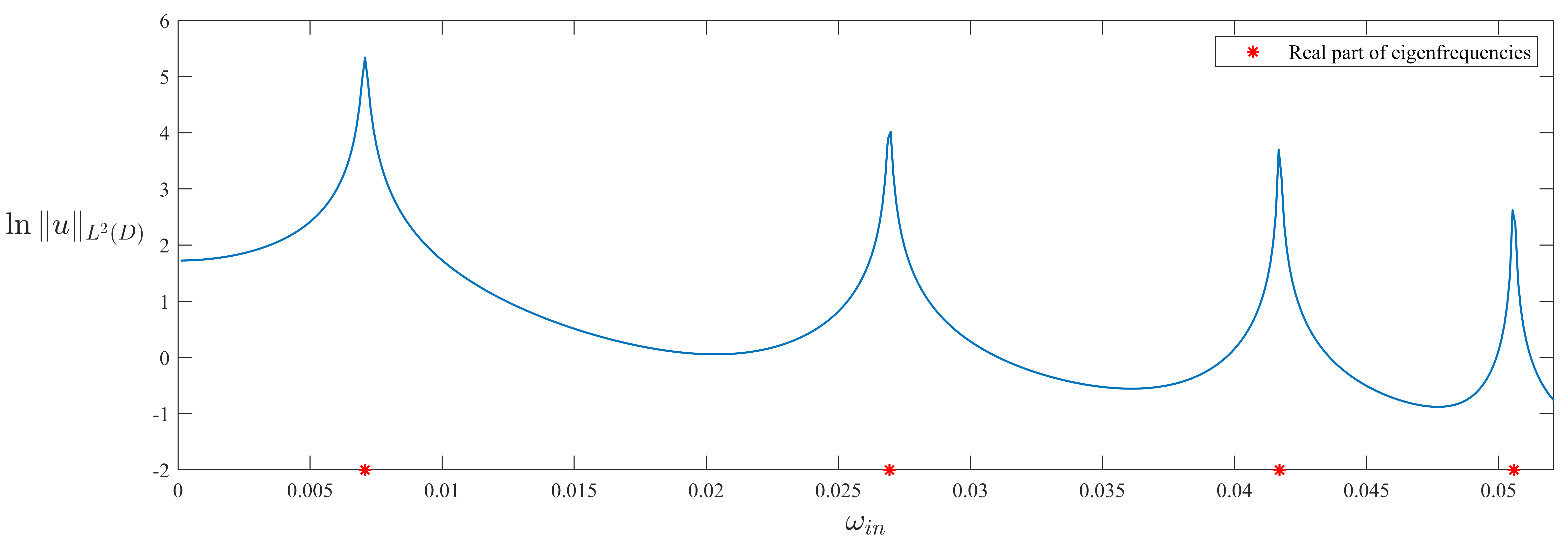}
	\caption{
		Norm of the acoustic pressure $u$ to equation \eqref{main_equation} for the four-layer nested resonators designed by \eqref{str01}.
	}\label{8_layer_d_05_6000peak}
\end{figure}
\begin{figure}[htbp]
	\centering
	\includegraphics[scale=0.5]{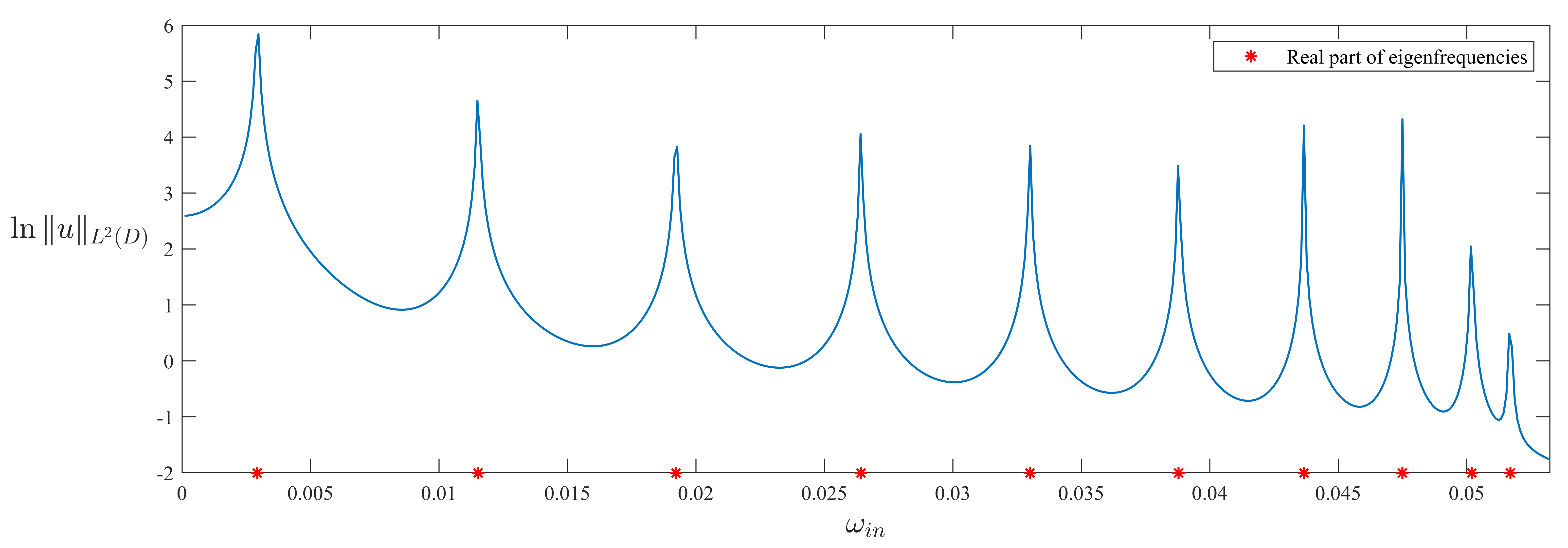}
	\caption{Norm of the acoustic pressure $u$ to equation \eqref{main_equation} for the ten-layer nested resonators designed by \eqref{str01}.}\label{20_layer_d_05_6000peak}
\end{figure}

\section{Concluding remarks}\label{sec6}
In this paper, we conducted  a comprehensive mathematical analysis of subwavelength resonances in multi-layered high-contrast structures. We established a relationship between the corresponding subwavelength resonant frequencies and the eigenvalues of a tridiagonal capacitance matrix and provided the  modal decomposition and  monopole approximation of multi-layered high-contrast structures. The discovery of this tridiagonal structure is a noteworthy result, which opens the door for further exploration. In particular, the explicit form of the capacitance matrix facilitates the study of resonance properties in phononic crystals composed of multiple multi-layered structures. This may be achieved through the application of established spectral theory for Toeplitz matrices, particularly within the context of one-dimensional topological metamaterials \cite{ABSCDE_ARMA2024, FCA_SIAP2023, ABL_arXiv1, ABBLT_arXiv, ABDHL_SIAP2024, ABL_arXiv2}.  These new developments will be reported in our forthcoming works.

\section*{Acknowledgment}
This work was partially supported by NSFC-RGC Joint Research Grant No. 12161160314 and the Natural Science Foundation Innovation Research Team Project of Guangxi (Grant No. 2025GXNSFGA069001).


\begin{thebibliography}{99}
	


\bibitem{ABSCDE_ARMA2024}
{\sc H. Ammari, S. Barandun, J. Cao, B. Davies, and E.O. Hiltunen},
{\em Mathematical foundations of the non-Hermitian skin effect},
Arch. Ration. Mech. Anal., 248 (2024), 33.

\bibitem{ABL_arXiv1}
{\sc H. Ammari, S. Barandun, and P. Liu},
{\em Applications of Chebyshev polynomials and Toeplitz theory to topological metamaterials}, Rev. Phys., 13 (2025), 100103.

\bibitem{ABL_arXiv2}
{\sc H. Ammari, S. Barandun, and P. Liu},
{\em Perturbed block Toeplitz matrices and the non-Hermitian skin effect in dimer systems of subwavelength resonators}, J. Math. Pure Appl., 195 (2025), 103658.

\bibitem{ABBLT_arXiv}
{\sc H. Ammari, S. Barandun, Y. Bruijn, P. Liu, and C. Thalhammer},
{\em Spectra and pseudo-spectra of tridiagonal $k$-Toeplitz matrices and the topological origin of the non-Hermitian skin effect}, arXiv:2401.12626.

\bibitem{ABDHL_SIAP2024}
{\sc H. Ammari, S. Barandun, B. Davies, E. O. Hiltunen, and P. Liu},
{\em Stability of the non-Hermitian skin effect in one dimension},
SIAM J. Appl. Math., 84 (2024),  1697--1717.



\bibitem{ACLJFA23}
{\sc H. Ammari, Y. Chow, and H. Liu},
{\em Quantum ergodicity and localization of plasmon resonances},  J. Funct. Anal., 285 (2023), 109976.

\bibitem{Ammari2013}
{\sc H.~Ammari, G.~Ciraolo, H.~Kang, H.~Lee, and G.~W. Milton}, {\em Spectral theory of a Neumann-Poincar{\'{e}}-type operator and analysis of cloaking due to anomalous localized resonance}, Arch. Ration. Mech. Anal., 208 (2013), 667--692.



\bibitem{AD_book2024}
{\sc  H. Ammari, and B. Davies}, {\em Metamaterial Analysis and Design: A Mathematical Treatment of Cochlea-inspired Sensors}, (De Gruyter, Berlin, 2024).

\bibitem{AD_SIIS2023}
{\sc  H. Ammari, and B. Davies},
{\em Asymptotic links between signal processing, acoustic metamaterials, and	biology}, SIAM J. Imaging Sci., 16 (2023), 64--88.

\bibitem{ADY_MMS2020}
{\sc H. Ammari, B. Davies, and S. Yu},
{\em Close-to-touching acoustic subwavelength resonators: eigenfrequency separation and gradient blow-up},
Multiscale Model. Simul., 18 (2020), 1299--1317.

\bibitem{Ammari2016}
{\sc H.~Ammari, Y.~Deng, and P.~Millien}, {\em Surface plasmon resonance of nanoparticles and applications in	imaging},  Arch. Ration. Mech. Anal., 220 (2016), 109--153.

\bibitem{AFGLZ_AIHPCAN}
{\sc H. Ammari, B. Fitzpatrick, D. Gontier, H. Lee, and H. Zhang},
{\em Minnaert resonances for acoustic waves in bubbly media},
 Ann. Inst. H. Poincar\'e C Anal. Non Lin\'eaire, 35 (2018), 1975--1998.

\bibitem{AFHLY_JDE2019}
{\sc H. Ammari, B. Fitzpatrick, E.O. Hiltunen, H. Lee, and S. Yu}, {\em Subwavelength resonances of encapsulated bubbles},  J. Differential Equations, 267 (2019), 4719--4744.

\bibitem{AFHLY_SIMA2020}
{\sc H. Ammari, B. Fitzpatrick, E.O. Hiltunen, H. Lee, and S. Yu},
{\em Honeycomb-lattice Minnaert bubbles}, SIAM J. Math. Anal., 52 (2020), 5441--5466.

\bibitem{AK_book2018}
{\sc H. Ammari, B. Fitzpatrick, H. Kang, M. Ruiz, S. Yu, and H. Zhang},  {\em Mathematical and Computational Methods in Photonics and Phononics},  (American Mathematical Society, Providence, RI, 2018).

\bibitem{AFLYZ_JDE2017}
{\sc H. Ammari, B. Fitzpatrick, H. Lee, S. Yu, and H. Zhang},
{\em Subwavelength phononic bandgap opening in bubbly media}, J. Differential Equations, 263 (2017), 5610--5629.



\bibitem{AFLYZQAM19}
{\sc H.~Ammari, B.~Fitzpatrick, H.~Lee, S.~Yu, and H.~Zhang},  {\em Double-negative acoustic metamaterials},
  Quart. Appl. Math., 77 (2019), 767--791.

\bibitem{Ammari2007}
{\sc H. Ammari and H. Kang},
{\em Polarization and Moment Tensors with Applications to Inverse Problems and Effective Medium Theory}, (Springer-Verlag, New York, 2007).

\bibitem{AKLLL_JMPA2007}
{\sc H. Ammari, H. Kang, H. Lee, J. Lee, and M. Lim,} {\em Optimal estimates for the electric field in two dimensions}, J. Math. Pures Appl., 88 (2007), 307--324.

\bibitem{ALLZ_MMS2024}
{\sc H. Ammari, B. Li, H. Li and J. Zou}, {\em Fano resonances in all-dielectric electromagnetic metasurfaces},
Multiscale Model. Simul., 22 (2024), 476--526.



\bibitem{AmmariSIMA17}
{\sc H.~Ammari and H.~Zhang},
{\em Effective medium theory for acoustic waves in bubbly fluids near minnaert resonant frequency}, SIAM J. Math. Anal., 49 (2017), 3252--3276.

\bibitem{AZ_CMP2015}
{\sc H.~Ammari and H.~Zhang},
{\em A mathematical theory of super-resolution by using a system of sub-wavelength Helmholtz resonators}, Comm. Math. Phys., 337 (2015),  379--428.

\bibitem{AJKKY_SIAM2017}
{\sc K. Ando, Y. Ji, H. Kang, K. Kim, and S. Yu}, {\em Spectrum of Neumann-Poincar\'e operator on annuli and cloaking by anomalous localized resonance for linear elasticity}, SIAM J. Math. Anal., 49 (2017), 4232--4250.

\bibitem{BZ_RMI2019}
{\sc E. Bonnetier and H. Zhang},
{\em Characterization of the essential spectrum of the Neumann-Poincar\'e operator in 2D domains with corner via Weyl sequences},
Rev. Mat. Iberoam., 35 (2019),  925--948.

\bibitem{CGS_JLMS2023}
{\sc X. Cao, A. Ghandriche, and M. Sini}, {\em The electromagnetic waves generated by a cluster of nanoparticles with high refractive indices}, J. Lond. Math. Soc., 108 (2023), 1531--1616.

\bibitem{CCS_IPI2018}
{\sc D.  Challa, A. Choudhury, and M. Sini}, {\em Mathematical imaging using electric or magnetic droplets as contrast agents}, Inverse Probl. Imaging, 12 (2018), 573--605.

\bibitem{CMJW_SV2018}
{\sc M. Chen, D. Meng, H. Jiang,and Y. Wang},
{\em Investigation on the band gap and negative properties of
concentric ring acoustic metamaterial}, Shock Vib., 12 (2018), 1369858.

%

\bibitem{DGS_IPI2021}
{\sc A. Dabrowski, A. Ghandriche, and M. Sini},
{\em Mathematical analysis of the acoustic imaging modality using bubbles as contrast agents at nearly resonating frequencies}, Inverse Probl. Imaging 15 (2021),  555--597.

\bibitem{CK_book}
{\sc D. Colton and R. Kress}, {\em Inverse Acoustic and Electromagnetic Scattering Theory}, (Springer, Cham, 2013).


\bibitem{DFLMMS22}
{\sc Y.~Deng, X.~Fang, and H.~Liu}, {\em Gradient estimates for electric fields with multiscale inclusions in the quasi-static regime}, Multiscale Model. Simul., 20 (2022), 641--656.

\bibitem{DKLZ_AAMM2024}
{\sc Y. Deng,  L. Kong, H. Liu, and L. Zhu}, {\em On field concentration between nearly-touching multiscale inclusions in the quasi-static regime}, Adv. Appl. Math. Mech., 16 (2024), 1252--1276.

\bibitem{DKLLZ_MLHC}
{\sc Y. Deng,  L. Kong, H. Li, H. Liu, and L. Zhu}, {\em Mathematical theory on multi-layer high contrast acoustic subwavelength resonators}, arXiv:2411.08938.

\bibitem{DKLZ24}
{\sc Y. Deng,  L. Kong, H. Liu, and L. Zhu},
{\em Elastostatics within multi-layer metamaterial structures and an algebraic framework for polariton resonances},
 ESAIM Math. Model. Numer. Anal.,  58 (2024), 1413--1440.

\bibitem{DLL_SIAM2020}
{\sc Y. Deng,  H. Li, and H. Liu},
{\em Analysis of surface polariton resonance for nanoparticles in elastic system}, SIAM J. Math. Anal., 52 (2020),  1786--1805.

\bibitem{DLL_JST2019}
{\sc Y. Deng, H. Li, and H. Liu}, {\em On spectral properties of Neuman-Poincar\'e operator and plasmonic resonances in 3D elastostatics}, J.
Spectr. Theory, 9 (2019), 767--789.

\bibitem{DLbook2024}
{\sc Y. Deng and H. Liu}, \emph{Spectral Theory of Localized Resonances and Applications}, (Springer, Singapore, 2024).

\bibitem{DLZJMPA21}
{\sc Y. Deng, H. Liu, and G.-H. Zheng},
 {\em Mathematical analysis of plasmon resonances for curved nanorods}, J. Math. Pure Appl., 153 (2021), 248--280.


%
%

\bibitem{dyatlov2019mathematical}
S. Dyatlov and M. Zworski, {\em Mathematical Theory of Scattering Resonances}, Grad. Stud. Math. 200,
American Mathematical Society, Providence, RI, 2019.

\bibitem{FangdengMMA23}
{\sc X. Fang and Y. Deng}, {\em On plasmon modes in multi-layer structures}, Math. Methods Appl. Sci., 46 (2023), 18075--18095.


\bibitem{FA_SAM_2022}
{\sc F. Feppona and H. Ammari},
{\em Modal decompositions and point scatterer approximations near the Minnaert resonance frequencies},
Stud. Appl. Math., 149 (2022),  164--229.

\bibitem{FA_JMPA2024}
{\sc F. Feppona and H. Ammari},
{\em  Subwavelength resonant acoustic scattering in fast time-modulated media},
J. Math. Pure Appl., 187 (2024), 233--293.

\bibitem{FCA_SIAP2023}
{\sc F. Feppon, Z. Cheng, and H. Ammari},
{\em Subwavelength resonances in one-dimensional high-contrast acoustic media},
SIAM J. Appl. Math., 83 (2023),  625--665.

\bibitem{GBF_book1995}
 {\sc G. B. Folland}, {\em Introduction to Partial Differential Equations},  (Princeton University Press, Princeton, NJ, 1995).



\bibitem{JKMA23}
{\sc Y.-G. Ji and H. Kang},
{\em Spectral properties of the Neumann-Poincar\'e operator on
rotationally symmetric domains},  Math. Ann., 387 (2023), 1105--1123.

\bibitem{KKLSY_JLMS2016}
{\sc H. Kang, K. Kim,  H. Lee, J. Shin, and S. Yu},
{\em Spectral properties of the Neumann-Poincar\'e operator and uniformity of estimates for the conductivity equation with complex coefficients}, J. Lond. Math. Soc.,  93 (2016),  519--545.

\bibitem{KM_MA2019}
{\sc J. Kim, and M. Lim},
{\em Electric field concentration in the presence of an inclusion with eccentric core-shell geometry}, Math. Ann., 373 (2019),  517--551.


\bibitem{KZDF}
{\sc L. Kong, L. Zhu, Y. Deng, and X. Fang}, {\em Enlargement of the localized resonant band gap by using multi-layer structures},  J. Comput. Phys., 518 (2024), 113308.

\bibitem{KMKG_JVA2017}
{\sc A. Krushynska, M. Miniaci, V. Kouznetsova, and M. Geers},
{\em Multilayered inclusions in locally resonant metamaterials: Two-dimensional versus three-dimensional modeling},
J. Vib. Acoust. 139 (2017), 024501.

\bibitem{LWSZ_NM2011}
{\sc Y. Lai, Y. Wu, P. Sheng, and Z.  Zhang}, {\em Hybrid elastic solids}, Nature Materials, 10 (2011), 620--624.

\bibitem{LPDV_PRE2007}
{\sc H. Larabi, Y. Pennec, B. Djafari-Rouhani, and J. Vasseur},
{\em Multicoaxial cylindrical inclusions in locally resonant phononic crystals}, Phys. Rev. E, 75, (2007) 066601.

%

\bibitem{LL_SIAM2016}
{\sc H. Li and H. Liu}, {\em On anomalous localized resonance for the elastostatic system}, SIAM J. Math. Anal., 48 (2016), 3322--3344.

\bibitem{LLZSIAM2022}
{\sc H.~Li,  H.~Liu, and J. Zou},
{\em Minnaert resonances for bubbles in soft elastic materials},
SIAM J. Appl. Math., 82 (2022), 119--141.

\bibitem{LXSIAM2017}
{\sc H. Li and L. Xu},
{\em Optimal estimates for the perfect conductivity problem with inclusions close to the boundary}, SIAM J. Math. Anal., 49 (2017),  3125--3142.

\bibitem{LXarXiv} {\sc H. Li and L. Xu}, {\em Resonant modes of two hard inclusions within a soft elastic material and their stress estimate}, arXiv:2407.19769.

\bibitem{LZ_MMS2023}
{\sc H. Li and Y. Zhao}, {\em The interaction between two close-to-touching convex acoustic subwavelength resonators}, Multiscale Model. Simul., 21 (2023), 804--826.

\bibitem{LZArxiv}
{\sc H. Li and J. Zou},
{\em Mathematical theory on dipolar resonances of hard inclusions within a soft elastic material}, arXiv:2310.12861.

\bibitem{LZScience}
{\sc Z. Liu, X. Zhang, Y. Mao, Y.  Zhu, Z. Yang,  T. Chan, and P. Sheng}, {\em Locally resonant sonic
materials}, Science, 289 (2000), 1734--1736.








\bibitem{Min_1933}
{\sc M. Minnaert}, {\em On musical air-bubbles and the sounds of running water},  Philos. Mag., 16 (1933), 235--248.


\bibitem{SEP1998}
{\sc B. Parlett}, {\em The symmetric eigenvalue problem}, (SIAM, Philadelphia, 1998).


\bibitem{SW_SIMA2022}
{\sc M. Sini and H. Wang}, {\em The inverse source problem for the wave equation revisited: A new approach}, SIAM J. Math. Anal., 54 (2022), 5160--5181.

\bibitem{PVDD_SCR2010}
{\sc Y. Pennec, J. Vasseur, B. Djafari-Rouhani, L. Dobrzy\'nski, and P. Deymier}, {\em Two-dimensional phononic crystals: Examples and applications}, Surf. Sci. Rep., 65 (2010), 229--291.




\bibitem{SPMW_AA2023}
{\sc G. Szczepa\'nski, M.  Podle\'sna, L. Morzynski and A. W{\l}udarczyk}, {\em Invpestigation of the acoustic properties of a metamaterial with a multi-ring structure}, Arch. Acoust., 48 (2023), 497--507.


\bibitem{ZH_PRB2009}
{\sc X. Zhou and G. Hu}, {\em Analytic model of elastic metamaterials with local resonances}, Phys. Rev. B, 79 (2009), 195109.



\end {thebibliography}

\end{document}